\theoremstyle{definition}
\newtheorem{definition}{Definition}
\theoremstyle{theorem}
\newtheorem{proposition}[definition]{Proposition}
\newtheorem{lemma}[definition]{Lemma}
\newtheorem{theorem}[definition]{Theorem}
\newtheorem{meta-corollary}[definition]{Meta-corollary}
\newtheorem{corollary}[definition]{Corollary}
\numberwithin{equation}{section}
\numberwithin{definition}{section}
\theoremstyle{remark}
\newtheorem{remark}[definition]{Remark}
\newtheorem{example}[definition]{Example}
\newtheorem{question}[definition]{Question}
\def\PP{\mathsf{P}}
\def\EE{\mathsf{E}}
\def\QQ{\mathsf{Q}}
\def\MM{\mathcal{M}}
\def\GG{\mathcal{G}}
\def\AA{\mathcal{A}}
\def\BB{\mathcal{B}}
\def\QQ{\mathsf{Q}}
\def\FF{\mathcal{F}}
\def\UU{\mathcal{U}}
\def\NN{\mathbb{N}}
\def\NN0{\mathbb{N}_0}
\def\id{\mathrm{id}}
\def\supp{\mathrm{supp}}
\def\conv{\mathrm{conv}}
\def\JJ{\mathfrak{I}}
\def\II{\mathcal{I}}
\def\LL{\mathcal{L}}
\def\CC{\mathcal{C}}
\begin{document}

\title{The structure of non-linear martingale optimal transport problems}
\author{Alexander M.~G.~ Cox}
\address{Department of Mathematical Sciences, University of Bath, U.K.}
\email{a.m.g.cox@bath.ac.uk}

\author{Matija Vidmar}
\address{Department of Mathematics, University of Ljubljana, and Institute of Mathematics, Physics and Mechanics, Slovenia}
\email{matija.vidmar@fmf.uni-lj.si}
\begin{abstract}
We explore the structure of solutions to a family of non-linear martingale optimal transport (MOT) problems that involve conditional expectations in the objective functional. En route general results concerning optimization over (martingale) measures are proved that appear much more widely applicable. In particular the analysis leads us to introduce a notion of so-called curtain transports; in a main contribution we highlight the r\^ole that these transports play in (non-linear) MOT.
\end{abstract}

\thanks{MV acknowledges financial support from the Slovenian Research Agency (core fundings Nos. P1-0222 \& P1-0402) and is grateful for the kind hospitality of the University of Bath, where he was on a sabbatical while this research was conducted.}

\keywords{Martingale optimal transport; VIX futures; model-independent pricing; conditional expectations}

\subjclass[2010]{Primary: 60G42, 49N99; secondary: 91G20} 

\maketitle

\section{Introduction}
In this paper we will be interested in describing the structure of the solution to a class of ``non-linear'' one-step one-dimensional  martingale optimal transport (MOT) problems. Informally, the non-linearity we mention will be in the objective functional $J(\nu$) --- to be optimized over the class of all martingale couplings $\nu$ of two given probabilities  on the real line, --- and it will come from an application of a (non-linear) function to a conditional expectation [w.r.t. $\nu$] of another function, before the outer unconditional expectation [again w.r.t. $\nu$] is finally taken. 
En route we will establish results that shed general  light on non-linear optimal martingale transport problems. In order to motivate our base class of problems, and to describe it comfortably in further detail, we agree on the following /perhaps slightly non-standard, though certainly not new/ 

\smallskip

\noindent
{\bf General notation.} We will write $\QQ[W]$ for $\EE_\QQ[W]$, $\QQ[W;A]$ for $\EE_\QQ[W\mathbbm{1}_A]$, $\QQ[W\vert \mathcal{H}]$ for $\EE_\QQ[W\vert \mathcal{H}]$, and $Z_\star \QQ$ for the law of $Z$ under $\QQ$ w.r.t. a $\sigma$-field on the codomain that will be clear from context. Further, for $\sigma$-fields $\AA$ and $\BB$, $\AA/\BB$ will denote the set of $\AA/\BB$-measurable maps; $\mathcal{B}_A$ is the Borel (under the standard topology) $\sigma$-field on $A$; $b\mathcal{B}:=\{f\in \mathcal{B}/\mathcal{B}_\mathbb{R}:f\text{ bounded}\}$. 

\subsection{Motivation: valuation of VIX futures}\label{subsection:motivation}
In this section, we motivate our problem by considering a particular financial problem (following \citet{guyon}). Let $(S_1,S_2,V)$ be the canonical projections on $(0,\infty)^2\times [0,\infty)$, and let $(X_1,X_2)$ be the canonical projections on $(0,\infty)^2$. Furthermore, we let $\mu_1$ and $\mu_2$ be probability measures on $\mathcal{B}_{(0,\infty)}$ in convex order (see Definition~\ref{def:convex}) and such that $\mu_1[\vert \ln\vert]\lor \mu_2[\vert \ln\vert]<\infty$;
 we also fix  $\tau\in (0,\infty)$. 

Denote by $\mathcal{M}'$ the set of probability measures $\mu$ on  $\mathcal{B}_{(0,\infty)^2 \times [0,\infty)}$ satisfying: 
$${S_1}_\star\mu=\mu_1,\, {S_2}_\star\mu=\mu_2,\mu[S_2\vert S_1,V]=S_1\text{ and }\sqrt{\mu\left[-\frac{2}{\tau}\ln\left(\frac{S_2}{S_1}\right)\Big\vert S_1,V\right]}=V\text{ a.s.-}\mu;$$
and denote by $\mathcal{M}$ the set  of probability measures $\nu$ on  $\mathcal{B}_{(0,\infty)^2 }$ such that 
$${X_1}_\star\nu=\mu_1,\, {X_2}_\star\nu=\mu_2\text{ and }\nu[X_2\vert X_1]=X_1\text{ a.s.-}\nu,$$
the collection of all martingale transports of $\mu_1$ to $\mu_2$. 

In \citet{guyon} there is then considered a  ``primal $\inf$''  super-replication optimization problem  \cite[Subsection~2.2]{guyon} for the time-$0$ price of a futures contract on the S\&P 500 VIX volatility index, the superhedging portfolio consisting of calls on the S\&P~500 at times $1$ and $2$, and forward-starting log-contracts. Indeed the $S_1$, $S_2$ and $V^2$ above correspond respectively to the value of the S\&P 500 index at time $1$, at time $2$, and the time-$1$ price of the forward-starting log-contract. We refer the interested reader to \cite{guyon} for further details concerning this primal problem; the specifics are not important for the understanding of what follows. What is important for our results is that this primal problem is shown \cite[Section~4]{guyon} to have the ``dual $\sup$'' representation: 
\begin{equation}\label{VIX:1}
\sup_{\mu\in \mathcal{M}'}\mu[V].
\end{equation}
This problem naturally corresponds to the financial problem of finding the pricing measure which correctly prices all the quote options (calibration), and which maximises the VIX future price. Furthermore it is shown in \cite[(proof of) Proposition~4.10 and Lemma~3.3]{guyon} that the latter problem is equivalent to
\begin{equation}\label{VIX:2}
\sup_{\nu\in \mathcal{M}}\nu\left[\sqrt{-\frac{2}{\tau}\nu\left[\ln\left(\frac{X_2}{X_1}\right)\vert X_1\right]}\right],
\end{equation}
in the sense that: (i) the two suprema coincide; and, moreover, (ii) if $\nu\in \mathcal{M}$ attains the $\sup$ in \eqref{VIX:2}, then $(X_1,X_2,\sqrt{-\frac{2}{\tau}\nu[\ln(\frac{X_2}{X_1})\vert X_1]})_\star\nu\in \mathcal{M}'$ attains the $\sup$ in \eqref{VIX:1}, while conversely if $\mu\in \mathcal{M}'$ attains the $\sup$ in \eqref{VIX:1}, then $(S_1,S_2)_\star\mu\in \mathcal{M}$ attains the $\sup$ in \eqref{VIX:2}.

\subsection{A class of non-linear MOT problems}
\label{sec:class-non-linear}

Motivated by the above, we consider the following family of optimal martingale transport problems, whose structure generalizes that of  \eqref{VIX:2}. Let $\JJ$ be a non-empty open interval of $\mathbb{R}$, $\gamma:\JJ\to \mathbb{R}$ convex and $\phi:[0,\infty)\to \mathbb{R}$  concave. Then we have, for given probability measures $\mu_1$ and $\mu_2$ on $\mathcal{B}_{\JJ}$ of finite mean, for which $\mu_1[\gamma^+]<\infty$, $\mu_2[\gamma^+]<\infty$, and  in convex order, the optimization problem

\begin{equation}\label{mtg-problem-intro}
\sup_{\nu\in \mathcal{M}}J(\nu),\text{ where }J(\nu):=\nu[V_\nu]
\text{ with } V_\nu:=\phi(\nu[\gamma(X_2)\vert X_1]-\gamma(X_1))
\end{equation}
$$\text{ for }\nu \in \mathcal{M}:=\{\text{martingale transports of $\mu_1$ to $\mu_2$}\}.$$
Here $(X_1,X_2)$ are the canonical projections on $\JJ^2$. The problem \eqref{VIX:2} corresponds to $\JJ=(0,\infty)$, $\gamma=-\frac{2}{\tau}\ln$ and $\phi=\sqrt{\cdot}$. 

Note that \eqref{mtg-problem-intro} does not fall under the umbrella of ``classical''  optimal martingale transport because of the ``non-linearity'' introduced by the application of $\phi$ subsequent to the conditioning in the expression for $V_\nu$. Indeed, in the classical setting, $V_\nu$ in the above would simply be a suitable (sufficiently integrable) fixed gain function $c\in \mathcal{B}_{\JJ^2}/\mathcal{B}_{[-\infty,\infty]}$, and such classical, ``linear'', optimal martingale transport problems have received a substantial amount of attention in recent years, for example in \citet{griessler,juillet,penkner,cox,beiglbock2017,Campi2017,Dolinsky2014,numerical,Henry,Hobson2015} and \citet{hobson2012}. On the other hand, $J$ of \eqref{mtg-problem-intro} is a special case of a general gain transport function as introduced in \citet{gozlan}, albeit there for optimization over all (not just martingale) couplings. A class of unrelated non-linear optimal martingale transport problems is considered in \citet[Section~5]{biegelbock}, but beyond that precious little appears to be known in the non-linear setting. 

It is indeed the non-linearity  in \eqref{mtg-problem-intro}   --- over and above the obvious fact that we are optimizing over martingale couplings, and not just all couplings --- that makes the analysis of \eqref{mtg-problem-intro} more involved, but also more interesting. It emerges, moreover, that the family of problems \eqref{mtg-problem-intro} is sufficiently special as to make a relatively explicit description of optimality possible, and we provide a panorama of this in the next subsection.

\subsection{Overview of results}
Fiest, when $\mu_1$ has a finite support $\{a_1,\ldots,a_n\}$ of cardinality $n\in \mathbb{N}$, our results will show that the optimization problem introduced in the preceding subsection reduces structurally to two subproblems (Meta-corollary~\ref{remark:fundamental}).

The first of these subproblems is the determination of what we call the curtain martingale transports of $\mu_1$ to $\mu_2$, the class of which can be described simply in terms of ``forbidden overlapping transports'' (Definition~\ref{definition:curtain}), and the members of which can successfully be characterized both constructively (Proposition~\ref{proposition:curtain-transport}) as well as being precisely the solutions to a certain class of classical (as above) optimal martingale transport problems in which the gain function $c$ is of tensor product form (Corollary~\ref{corollary:curtain}). These curtain transports include the left- and right- curtain couplings of \citet{juillet} (see also \citet{Henry,juillet2016}) and are contained in the class of shadow couplings of \citet{biegelbock}. 
This first subproblem is independent of the particularities of the functions $\gamma$ and $\phi$, and serves indeed as a means to solve a much wider family of problems than the one given in \eqref{mtg-problem-intro}. 

The second subproblem is an optimization of a concave function (determined by $\phi$, $\gamma$ and $\mu_1$) over the compact convex polytope of Euclidean space, whose vertices are given in terms of the curtain martingale transports and $\gamma$. See Subsection~\ref{subsection:finite-support} for further details. 

The above reduction is made possible by the following result (Theorem~\ref{corollary:extremal-points} below), which is one of our main contributions: let $\mathcal{X}:=\{(\nu_1[\gamma],\ldots,\nu_n[\gamma]):\nu\in \mathcal{M}\}$, where for a $\nu\in \MM$, $\nu=:\sum_{i=1}^n\mu_1(\{a_i\})\delta_{a_i}\times \nu_i$. Then $\mathcal{X}=\conv(\mathcal{E})$, where $\mathcal{E}:=\{(\nu_1[f],\ldots,\nu_n[f]):\nu\text{ a curtain transport of }\mu_1\text{ to }\mu_2\}$. 

Second, while we were not able to prove an analogous decomposition when the first marginal is not finitely supported, under reasonably innocuous conditions, a continuity result, Theorem~\ref{theorem:general}, ensures, informally speaking, that the solution to \eqref{mtg-problem-intro} is well-approximated by the solution to the same problem when $\mu_1$ is replaced by a sufficiently fine finitely supported discretization of itself.

We note that  \citet{guyon} also considered the accompanying subreplication ``primal $\sup$'' problem for the price of the VIX futures, whose ``dual $\inf$'' problem corresponds to replacing $\sup$ by $\inf$ in \eqref{VIX:1}. However the latter is no longer equivalent to the analogue of \eqref{VIX:2}. While we will have occasion to say something about \eqref{mtg-problem-intro} in which $\inf$ replaces $\sup$ therein, we shall say nothing about \eqref{VIX:1} when $\inf$ replaces $\sup$. 

\subsection{Structure of the paper}
The organisation of the remainder of this paper is as follows. Section~\ref{section:general} delivers some general results in optimization over (martingale) measures. Section~\ref{section:main} considers in detail the family of problems \eqref{mtg-problem-intro}, applying to it in particular the results of Section~\ref{section:general}. More precisely: Subsection~\ref{subsection:intro} gives some general properties of the family \eqref{mtg-problem-intro}; Subsection~\ref{section:two-point} explores the case when $\mu_1$ has a two-point support (this assumption renders further simplifications possible); Subsection~\ref{subsection:finite-support} handles the case when the support of $\mu_1$ is finite; Subsection~\ref{subsection:general} provides an ``approximation'' theorem which connects the general case to the finitely-supported-first-marginal case; finally, Subsection~\ref{subsection:duality} establishes a duality result (a special case of which is the super-replication primal problem of \cite{guyon} mentioned above). 

\section{Optimization over (martingale) measures} \label{section:general}
In this section we prove some key and quite general results about optimization over (martingale) measures (Propositions~\ref{proposition:general}, ~\ref{proposition:curtain-transport} and~\ref{proposition:curtains-optimal}; Theorem~\ref{corollary:extremal-points}), which will later be applied to the understanding of  \eqref{mtg-problem-intro} in Section~\ref{section:main}. We believe the mentioned results are interesting in their own right. Throughout this section we let $\JJ$ be a non-empty open interval of $\mathbb{R}$ and  denote by $X_1$ and $X_2$ the canonical projections on the first and second coordinate of $\JJ^2$. 

We will require the following notation and notions.
\begin{definition}
For a finite measure $\gamma$ on $\mathcal{B}_{\JJ}$ and $\{a,b\}\subset [0,\gamma[1]]$ with $a\leq b$, let $\gamma_a^b$ be the restriction of $\gamma$ between the quantilies $a$ and $b$; that is to say $\gamma_a^b$ is the unique measure $\nu$ on $\mathcal{B}_{\JJ}$ such that $\nu((-\infty,x]\cap \JJ)=(\gamma((-\infty,x]\cap\JJ)-a)^+\land (b-a)$ for all $x\in \JJ$. A measure $\nu$ on $\mathcal{B}_{\JJ}$ is called a (co-)connected part of $\gamma$ if $\nu=\gamma_a^b$ ($\nu=\gamma-\gamma_a^b$) for some $a\leq b$, $\{a,b\}\subset [0,\gamma[1]]$.
\end{definition}

\begin{remark}\label{remark:connected-part}
Let $\gamma$ be a finite measure on $\mathcal{B}_{\JJ}$ with finite first moment (i.e.~$\gamma[\mathrm{id}]\in \mathbb{R}$ is well-defined and finite). Then, for any $a\in [0,\gamma[1]]$, the map $[0,\gamma[1]-a]\ni c\mapsto \gamma_c^{c+a}[\mathrm{id}]$ is real-valued, nondecreasing, continuous, and its intervals of constancy coincide with those of $[0,\gamma[1]-a]\ni c\mapsto \gamma_c^{c+a}$. Therefore it maps $[0,\gamma[1]-a]$ onto $[\gamma_0^a[\mathrm{id}], \gamma_{\gamma[1]-a}^{\gamma[1]}[\mathrm{id}]]$ and, moreover, for any $b\in [\gamma_0^a[\mathrm{id}], \gamma_{\gamma[1]-a}^{\gamma[1]}[\mathrm{id}]]$ there is a unique connected part of $\gamma$ of mass $a$ and first moment $b$. Correspondingly, again for any $a\in [0,\gamma[1]]$ and $b\in  [\gamma_0^a[\mathrm{id}], \gamma_{\gamma[1]-a}^{\gamma[1]}[\mathrm{id}]]$, there is also  a unique co-connected part of $\gamma$ of mass $a$ and first moment $b$.
\end{remark}

\begin{proposition}\label{proposition:general}
Let $\gamma$ be a finite measure on $\mathcal{B}_{\JJ}$ of finite first moment. Let $a\in [0,\gamma[1]]$ and $b\in [\gamma^{a}_{0}[\mathrm{id}],\gamma^{\gamma[1]}_{\gamma[1]-a}[\mathrm{id}]]$. Set $\mathcal{N}$ equal to the collection of precisely all the measures $\mu$ on $\mathcal{B}_{\JJ}$ with $\mu[1]=a$, $\mu[\mathrm{id}]=b$ and $\mu\leq \gamma$. Then if $\phi:\JJ\to\mathbb{R}$ is concave with $\gamma[\phi^-]<\infty$: 
\begin{enumerate}[(i)]
\item\label{proposition:general:i} the supremum $\sup_{\mu\in \mathcal{N}}\mu[\phi]$ is attained at the unique connected part of $\gamma$ that belongs to $\mathcal{N}$; 
\item\label{proposition:general:ii}  the infimum $\inf_{\mu\in \mathcal{N}}\mu[\varphi]$ is attained at the unique co-connected part of $\gamma$ that belongs to $\mathcal{N}$.
\end{enumerate}
If $\phi$ is strictly concave, then the supremum and infimum in the preceding are \emph{uniquely} attained.
\end{proposition}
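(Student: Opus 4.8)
\noindent\textit{Proof plan.}
The plan is to produce, for the supremum in \eqref{proposition:general:i}, an affine ``certificate'' $\ell$ for which the comparison of $\mu[\phi]$ with $\mu^{\ast}[\phi]$ (where $\mu^{\ast}\in\mathcal N$ is the connected part) collapses, after subtracting $\ell$, to a pointwise sign check — the common zeroth and first moments of the members of $\mathcal N$ making the subtraction of $\ell$ cost nothing. I will write out \eqref{proposition:general:i} only; \eqref{proposition:general:ii} is entirely parallel: the co-connected part in $\mathcal N$ agrees with $\gamma$ off a closed subinterval $[\alpha',\beta']$ of $\JJ$ and vanishes on its interior, the certificate is again the chord of $\phi$ through $(\alpha',\phi(\alpha'))$ and $(\beta',\phi(\beta'))$, and the signs of $h:=\phi-\ell$ and of $\mu_{\ast}-\mu$ now conspire to deliver $\inf$ rather than $\sup$. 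The strict-concavity clause will drop out of the same estimate in both cases.

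I would begin with two reductions. First, every integral below is a genuine real number: $\mu[\phi^{-}]\le\gamma[\phi^{-}]<\infty$ since $\mu\le\gamma$, and an affine majorant $\phi\le\ell_{0}$ coming from a supporting line, together with $\gamma[1],\gamma[\mathrm{id}]\in\mathbb R$, gives $\mu[\phi^{+}]<\infty$; hence $\mu[\phi]\in\mathbb R$ for all $\mu\in\mathcal N$, and $\mu[\ell]\in\mathbb R$ for every affine $\ell$ (also $h\in L^{1}(\gamma)$). Second, if $a\in\{0,\gamma[1]\}$, or if $b\in\{\gamma_{0}^{a}[\mathrm{id}],\gamma_{\gamma[1]-a}^{\gamma[1]}[\mathrm{id}]\}$, then $\mathcal N$ is a singleton — an extreme value of the first moment over mass-$a$ submeasures of $\gamma$ being attained only by the corresponding leftmost/rightmost connected part, a routine rearrangement fact — and there is nothing to prove. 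So assume $0<a<\gamma[1]$ and $\gamma_{0}^{a}[\mathrm{id}]<b<\gamma_{\gamma[1]-a}^{\gamma[1]}[\mathrm{id}]$; then by Remark~\ref{remark:connected-part} I may pick $c\in(0,\gamma[1]-a)$ with $\mu^{\ast}:=\gamma_{c}^{c+a}$ of first moment $b$, so that $\mu^{\ast}$ is the connected part in $\mathcal N$. Set $\alpha:=\inf\supp\mu^{\ast}\le\beta:=\sup\supp\mu^{\ast}$; from the definition of the restriction between quantiles, and because $c\in(0,\gamma[1]-a)$, one reads off that $\alpha,\beta\in\JJ$, that $\mu^{\ast}$ is carried by $[\alpha,\beta]$, and that $\mu^{\ast}=\gamma$ on $(\alpha,\beta)$ (the excised pieces $\gamma_{0}^{c}$, $\gamma_{c+a}^{\gamma[1]}$ being carried by $(-\infty,\alpha]$ and $[\beta,\infty)$ respectively).

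Next the certificate: take $\ell$ to be the chord of $\phi$ through $(\alpha,\phi(\alpha))$ and $(\beta,\phi(\beta))$ if $\alpha<\beta$, and a supporting line of $\phi$ at $\alpha$ if $\alpha=\beta$. By concavity, $h:=\phi-\ell$ satisfies $h\ge0$ on $[\alpha,\beta]$, $h\le0$ on $\JJ\setminus(\alpha,\beta)$, and $h(\alpha)=h(\beta)=0$. Fix $\mu\in\mathcal N$. Since $\mu$ and $\mu^{\ast}$ share the mass $a$ and the first moment $b$, $\mu[\ell]=\mu^{\ast}[\ell]$, hence $\mu^{\ast}[\phi]-\mu[\phi]=\mu^{\ast}[h]-\mu[h]$. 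Because $\mu^{\ast}$ is carried by $[\alpha,\beta]$, $h$ vanishes at $\alpha$ and $\beta$, $h=h^{+}$ on $(\alpha,\beta)$, and $h^{+}$ vanishes off $[\alpha,\beta]$, this rearranges to
\[
\mu^{\ast}[\phi]-\mu[\phi]=\int_{(\alpha,\beta)}h^{+}\,\mathrm{d}(\mu^{\ast}-\mu)+\mu[h^{-}]\ \ge\ 0 ,
\]
since $\mu^{\ast}=\gamma\ge\mu$ on $(\alpha,\beta)$ while $h^{+},h^{-}\ge0$. Thus $\mu^{\ast}$ attains the supremum. If $\phi$ is strictly concave, then $h>0$ on $(\alpha,\beta)$ and $h<0$ on $\JJ\setminus[\alpha,\beta]$; if $\mu\in\mathcal N$ also attains the supremum, then both summands in the display vanish, forcing $\mu=\mu^{\ast}$ on $(\alpha,\beta)$ and $\mu(\JJ\setminus[\alpha,\beta])=0$, so $\mu^{\ast}-\mu=s\delta_{\alpha}+t\delta_{\beta}$ for some $s,t\in\mathbb R$; matching masses gives $s+t=0$ and matching first moments gives $s\alpha+t\beta=0$, whence $\mu=\mu^{\ast}$ (directly $\mu=a\delta_{\alpha}=\mu^{\ast}$ in the case $\alpha=\beta$).

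The step I expect to require the most care is not any individual inequality but the bookkeeping around atoms of $\gamma$ sitting at $\alpha$ or $\beta$, where $\mu^{\ast}$ inherits only part of $\gamma$'s mass there; these boundary terms are exactly what choosing $\ell$ to interpolate $\phi$ at $\alpha$ and $\beta$ annihilates — making $h$ vanish at those points, so the endpoint contributions drop out of every integral — which is also why I first reduce to the case in which $\mathcal N$ is not a singleton, equivalently $\alpha,\beta\in\JJ$ with a genuine chord available.
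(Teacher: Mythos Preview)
Your proof is correct and rests on the same affine certificate as the paper's: the chord $\ell$ of $\phi$ through the endpoints $\alpha,\beta$ of the support of the connected part $\mu^{\ast}$, so that $h=\phi-\ell$ is nonnegative on $[\alpha,\beta]$ and nonpositive outside. The paper packages this identically chosen certificate inside a Lagrangian/minimax frame --- writing $\sup_{\mu\in\mathcal N}\mu[\phi]=\sup_{\mu\le\gamma}\inf_{\beta^*,\alpha^*}\mu[\phi+\beta^*\mathrm{id}+\alpha^*]-\beta^*b-\alpha^*a$ and then exhibiting the pair $(\beta^*,\alpha^*)$ (your $-\ell$) for which $\mu^{\ast}$ is optimal in the unconstrained inner problem --- whereas you bypass the duality language and compare $\mu^{\ast}[\phi]$ with $\mu[\phi]$ directly via $\mu^{\ast}[h]-\mu[h]$. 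The two arguments are the same in substance; your direct comparison is arguably the more transparent route, and your explicit bookkeeping at the atoms $\alpha,\beta$ (where $h$ vanishes, so endpoint mass discrepancies cost nothing) makes visible a detail the paper leaves implicit.
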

\begin{remark}
By Jensen's inequality, $\gamma[\phi^+]<\infty$, which guarantees (together with the assumed condition $\gamma[\phi^-]<\infty$) that all the considered integrals are well-defined and finite.
\end{remark}
\begin{remark}\label{remark:ancillary}
The conditions on the pair $(a,b)$ simply guarantee that $\mathcal{N}$ is non-empty. In particular if $\nu$ is a finite measure on $\mathcal{B}_{\JJ^2}$ with second marginal ${X_2}_\star \nu$ equal to $\gamma$, then for any $x\in \JJ$, setting $a_x:=\nu(\{x\}\times \JJ)\in [0,\gamma[1]]$, one has $b_x:=\nu[X_2;X_1=x]\in [\gamma^{a_x}_{0}[\mathrm{id}],\gamma^{\gamma[1]}_{\gamma[1]-a_x}
[\mathrm{id}]]$, i.e. the pair $(a_x,b_x)$ satisfies the conditions for $(a,b)$ of Proposition~\ref{proposition:general}: indeed $\nu(X_2\in \cdot,X_1=x)\in\mathcal{N}$.
\end{remark}
\begin{proof}
If $b\in \{\gamma^{a}_{0}[\mathrm{id}],\gamma^{\gamma[1]}_{\gamma[1]-a}[\mathrm{id}]\}$ then $\mathcal{N}$ is a singleton consisting of a connected and co-connected part of $\gamma$, and there is nothing to prove. Then assume $b\in  (\gamma^{a}_{0}[\mathrm{id}],\gamma^{\gamma[1]}_{\gamma[1]-a}[\mathrm{id}])$. We prove case \ref{proposition:general:i}; case \ref{proposition:general:ii} follows by a simple modification of the argument.

\noindent
{\bf Existence of connected optimizer:}  Clearly 
\begin{equation*}
  \sup_{\mu\in \mathcal{N}}\mu[\phi]=\sup_{\mu\in \mathrm{M}}\inf_{\beta\in \mathbb{R},\alpha\in \mathbb{R}}\mu[\phi+\beta \mathrm{id}+\alpha]-\beta b-\alpha a,
\end{equation*}
where $\mathrm{M}$ is the collection of those measures $\mu$ on $\mathcal{B}_{\JJ}$ for which $\mu\leq \gamma$. This is bounded above by $\inf_{\beta\in\mathbb{R},\alpha\in \mathbb{R}}\sup_{\mu\in \mathrm{M}}\mu[\ln+\beta \mathrm{id}+\alpha]-\beta b-\alpha a$. Moreover if $\beta^*\in \mathbb{R},\alpha^*\in \mathbb{R},\mu^*\in \mathrm{M}$ can be found such that  $\mu^*$ is optimal for $\max_{\mu\in \mathrm{M}}\mu[\phi+\beta^*\mathrm{id}+\alpha^*]$, with $\mu^* [\mathrm{id}]=b$ and $\mu^*[1]=a$ (``Lagrange optimality conditions''), then there is ``minimax equality'' and $\mu^*$ attains the supremum in $\sup_{\mu\in \mathcal{N}}\mu[\phi]$. Taking for $\mu^*$ the unique connected part of $\mu$ that belongs to $\mathcal{N}$, cf. Remark~\ref{remark:connected-part}, we see that it suffices to choose $\beta^*$ and $\alpha^*$ in such a way that $\phi+\beta^*\mathrm{id}+\alpha^*$ is nonnegative on the smallest interval $\II$ that carries $\mu^*$ and nonpositive off this interval: of course automatically it must then vanish at the endpoints $x^*\leq X^*$ of said interval, which belong to $\JJ$ because we have assumed that $b\in (\gamma^{a}_{0}[\mathrm{id}],\gamma^{\gamma[1]}_{\gamma[1]-a}[\mathrm{id}])$. To this end set first  $\beta^*=-\frac{\phi(X^*)-\phi(x^*)}{X^*-x^*}$ when $x^*<X^*$, and take any $\beta^*\in [-\phi'_-(x^*),-\phi'_+(X^*)]$ when $x^*=X^*$;  then take $\alpha^*=-\phi(X^*)-\beta^* X^*$. By concavity of $\phi$ this gives the desired $\beta^*$ and $\alpha^*$.

\noindent
{\bf Uniqueness of optimizer:} Assume now $\phi$ is strictly concave. It is clear that any maximizer of $\mu[\phi]$ over $\mu\in \mathcal{N}$ also maximizes $\mu[\phi+\beta^*\id+\alpha^*]$ over $\mu\in \mathcal{N}$. But because of the strict concavity of $\phi$, $\phi$ is strictly positive on $(x^*,X^*)$ and strictly negative on $\JJ\backslash [x^*,X^*]$, which renders $\mu^*$ the unique maximizer of $\mu[\phi+\beta^*\id+\alpha^*]$ over $\mu\in \mathcal{N}$.
\end{proof}

We turn now to the notion of curtain martingale transports.

\begin{definition}\label{definition:curtain}
Let $\lambda_1$ and $\lambda_2$ be probability measures on $\mathcal{B}_ {\JJ}$ of finite mean and let $R$ be any relation on $\JJ$. Then:
\begin{enumerate}
\item a \emph{martingale coupling} of $\lambda_1$ and $\lambda_2$ (or a martingale transport of $\lambda_1$ to $\lambda_2$) is a probability $\nu$ on $\mathcal{B}_{\JJ^2}$ such that ${X_1}_\star \nu=\lambda_1$, ${X_2}_\star\nu=\lambda_2$, $\nu[X_2\vert X_1]=X_1\text{ a.s.-}\nu$;
\item a \emph{$R$-curtain martingale coupling} of $\lambda_1$ and $\lambda_2$ (or a $R$-curtain martingale transport of $\lambda_1$ to $\lambda_2$) is a martingale coupling such that additionally there exists a $\Gamma\in \mathcal{B}_{\JJ^2}$ that carries $\nu$ and such that one cannot have $\{(x',y'),(x,y_1),(x,y_2)\}\subset \Gamma$ with $xRx'\text{ and }y_1<y'<y_2$;
\item a \emph{curtain martingale coupling} (or curtain martingale transport) of $\lambda_1$ to $\lambda_2$ is a martingale coupling of $\lambda_1$ and $\lambda_2$ for which there exists  $\Gamma\in \mathcal{B}_{\JJ^2}$ that carries $\nu$ and such that one cannot have  $\{(x,y),(x,y_1),(x,y_2),(x',y'),(x',y_1'),(x',y_2')\}\subset \Gamma$ with $$x\ne x',\, y_1<y'<y_2\text{ and }y_1'<y<y_2'.$$
\end{enumerate}
\end{definition}
\begin{remark}
The terminology ``curtain martingale coupling'' comes from \citet[Theorem~4.5]{juillet} where the notions of a left- and of a right-curtain coupling were introduced. In fact, except that \cite{juillet} works on $\mathbb{R}$ where we allow a non-empty open interval of the real line, every left- (right-) curtain coupling in the sense of \cite{juillet} is a curtain martingale coupling in the sense of the preceding definition (but the converse is not true; cf. Remark~\ref{remark:curtains}).  Further, $<_\JJ$-curtain (resp. $>_\JJ$-curtain) couplings are precisely the left- (resp. right-) curtain couplings in the sense of \cite{juillet}. See also Remark~\ref{remark:curtains} below.
\end{remark}
Recall also:

\begin{definition}\label{def:convex}
Probabilities $\lambda_1$ and $\lambda_2$ on $\mathcal{B}_{\JJ}$ of finite mean are said to be in convex order provided $\lambda_1[\phi]\leq \lambda_2[\phi]$ for all convex $\phi:\JJ\to \mathbb{R}$. 
\end{definition}

\begin{remark}
In the preceding definition, by Jensen's inequality, automatically $\lambda_i[\phi^-]<\infty$, $i\in \{1,2\}$, so that all the integrals are well-defined; by Strassen's Theorem (\cite{strassen}), this is well-known to be equivalent to the existence of a martingale transport of $\lambda_1$ to $\lambda_2$. 
\end{remark}

We now state the following crucial constructive characterization of curtain martingale transports when the first marginal is finitely supported.

\begin{proposition}\label{proposition:curtain-transport}
Let $\lambda_1$ and $\lambda_2$ be probability measures on $\mathcal{B}_ {\JJ}$ of finite mean. Assume $\lambda_1$ has finite support; let $A:=\supp(\lambda_1)$ and $n:=\vert A\vert$. Then the following statements are equivalent for any given $\nu$:
\begin{enumerate}[(i)]
\item \label{curtain:i} $\nu$ is a curtain martingale coupling of $\lambda_1$ and $\lambda_2$. 
\item\label{curtain:ii}  There exists a bijection $J:\{1,\ldots,n\}\to A$ (i.e. an enumeration of $A$) such that $\nu=\delta_{J(1)}\times \nu_1+\cdots+\delta_{J(n)}\times \nu_{n}$, where inductively, for $i\in \{1,\ldots,n\}$, $\nu_i$ is the unique connected part of $\lambda_2-(\nu_1+\cdots+\nu_{i-1})$ of mass $\lambda_1(\{J(i)\})$ and first moment $\lambda_1(\{J(i)\})J(i)$ (in the terminology of \cite[Definition~2.4]{biegelbock}, $\nu_i$ is the shadow of $\lambda_1(\{J(i)\})\delta_{J(i)}$ in $\lambda_2-(\nu_1+\cdots+\nu_{i-1})$).
\end{enumerate}
Furthermore, if $\lambda_1$ and $\lambda_2$ are in convex order, then for each bijection $J:\{1,\ldots,n\}\to A$, there exists a (necessarily unique) $\nu$ satisfying \ref{curtain:ii} with this $J$.
\end{proposition}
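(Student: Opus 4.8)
The plan is to establish the equivalence \ref{curtain:i}$\Leftrightarrow$\ref{curtain:ii} and then the final existence-and-uniqueness claim, treating the three implications separately.

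\textbf{\ref{curtain:ii}$\Rightarrow$\ref{curtain:i}.} Granting the decomposition, that $\nu$ is a martingale coupling of $\lambda_1$ and $\lambda_2$ is routine: its first marginal is $\sum_i\nu_i[1]\delta_{J(i)}=\sum_i\lambda_1(\{J(i)\})\delta_{J(i)}=\lambda_1$; the partial sums $\nu_1+\cdots+\nu_i$ increase, stay $\leq\lambda_2$, and reach total mass $1$ at $i=n$, which forces the second marginal to equal $\lambda_2$; and the prescribed first moments give $\nu[X_2\vert X_1]=X_1$. For the curtain property, write $\eta_i:=\lambda_2-(\nu_1+\cdots+\nu_{i-1})$, so $\nu_i$ is a connected part of $\eta_i$; directly from the defining formula, a connected part ``sits in the middle'', i.e.\ there are $c_i\leq d_i$ in $[-\infty,\infty]$ with $\nu_i$ carried by $[c_i,d_i]\cap\JJ$ and $\eta_i-\nu_i$ carried by $\JJ\setminus(c_i,d_i)$. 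Since $\nu_k\leq\eta_k\leq\eta_i-\nu_i$ whenever $k>i$, each $\nu_k$ with $k>i$ is carried by $\JJ\setminus(c_i,d_i)$. Picking for every $k$ a Borel carrier $C_k$ of $\nu_k$ with $C_k\subseteq[c_k,d_k]\cap\bigcap_{i<k}\bigl(\JJ\setminus(c_i,d_i)\bigr)$ and setting $\Gamma:=\bigcup_k\{J(k)\}\times C_k$ (which carries $\nu$), the forbidden configuration of Definition~\ref{definition:curtain}(3) is impossible: were it present with first coordinates $J(i)\neq J(k)$, say $i<k$, then its $y$-coordinates above $J(i)$ lie in $[c_i,d_i]$ whereas the one above $J(k)$ lies in $\JJ\setminus(c_i,d_i)$, contradicting the interleaving $y_1<y'<y_2$ imposed at $J(i)$ (the case $k<i$ is symmetric, via the other interleaving).

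\textbf{\ref{curtain:i}$\Rightarrow$\ref{curtain:ii}.} Induct on $n=\vert A\vert$, the case $n=1$ being immediate ($\lambda_2$ is a connected part of itself and the martingale property pins its moment). For the step write $\nu=\sum_{a\in A}\delta_a\times\mu_a$ with $\mu_a:=\nu(X_2\in\cdot\,,\,X_1=a)$, and fix a set $\Gamma$ witnessing that $\nu$ is a curtain coupling; intersecting $\Gamma$ with $\bigcup_a\{a\}\times\supp(\mu_a)$ we may assume each section $\Gamma_a$ is a carrier of $\mu_a$ contained in $K_a:=\supp(\mu_a)$, hence dense in $K_a$. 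With $J_a:=(\inf K_a,\sup K_a)$ define a relation on $A$ by $a\leadsto a'$ iff $K_{a'}\cap J_a\neq\emptyset$; by the density just noted, the defining property of $\Gamma$ says precisely that there is no pair of distinct $a,a'$ with both $a\leadsto a'$ and $a'\leadsto a$. The key claim is that then there is no cycle $a_1\leadsto a_2\leadsto\cdots\leadsto a_k\leadsto a_1$ through distinct $a_1,\dots,a_k$. Indeed such a cycle would have $k\geq3$, and for each consecutive pair (indices mod $k$) one would have $a_i\leadsto a_{i+1}$ but not $a_{i+1}\leadsto a_i$; since the $K_a$ are closed one has $\inf K_a,\sup K_a\in K_a$, and a short computation from ``$K_{a_{i+1}}$ meets $J_{a_i}$ while $K_{a_i}$ misses $J_{a_{i+1}}$'' yields $\conv(K_{a_{i+1}})\subseteq\conv(K_{a_i})$; going round the cycle these convex hulls all coincide, with closure $[l,r]$ say, and then ``not $a_{i+1}\leadsto a_i$'' reads $K_{a_i}\cap(l,r)=\emptyset$ for every $i$, contradicting ``$a_{i-1}\leadsto a_i$'', which reads $K_{a_i}\cap(l,r)\neq\emptyset$. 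Hence the directed graph on $A$ with edges $a\to a'$ (for $a\neq a'$, $a\leadsto a'$) is acyclic and so has a sink $m$: $m\not\leadsto a$ for every $a\neq m$, i.e.\ $K_a\cap J_m=\emptyset$ for all $a\neq m$. Thus $\lambda_2-\mu_m=\sum_{a\neq m}\mu_a$ is carried by $\JJ\setminus J_m$ while $\mu_m$ is carried by $[\inf K_m,\sup K_m]\cap\JJ$, and an inspection of distribution functions shows $\mu_m=(\lambda_2)_s^t$ with $s=\lambda_2((-\infty,\inf K_m))+(\lambda_2-\mu_m)(\{\inf K_m\})$ and $t=s+\mu_m[1]$, so $\mu_m$ is a connected part of $\lambda_2$; by Remark~\ref{remark:connected-part} it is \emph{the} one of mass $\lambda_1(\{m\})$ and first moment $\lambda_1(\{m\})m$. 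Set $J(1):=m$, $\nu_1:=\mu_m$; restricting $\Gamma$ to $(A\setminus\{m\})\times\JJ$ shows $\nu-\delta_m\times\mu_m$, renormalised, is a curtain martingale coupling of $\lambda_1-\lambda_1(\{m\})\delta_m$ to $\lambda_2-\mu_m$ (renormalised), with first marginal supported on the $(n-1)$-point set $A\setminus\{m\}$; the inductive hypothesis applied to it, together with $(J(1),\nu_1)$, gives the decomposition in \ref{curtain:ii}, the residual conditions matching up because $\lambda_2-(\nu_1+\cdots+\nu_{i-1})=(\lambda_2-\mu_m)-(\nu_2+\cdots+\nu_{i-1})$.

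\textbf{The final assertion.} Fix a bijection $J$ and build $\nu_1,\dots,\nu_n$ recursively. Uniqueness at each stage is immediate from Remark~\ref{remark:connected-part}, so only existence is at issue: that the requested mass lies in $[0,\eta_i[1]]$ and the requested first moment in the admissible window of Remark~\ref{remark:connected-part} for $\eta_i:=\lambda_2-(\nu_1+\cdots+\nu_{i-1})$. This follows by carrying along the invariant that $\sum_{j\geq i}\lambda_1(\{J(j)\})\delta_{J(j)}$ and $\eta_i$ are in convex order (of the same mass $1-\sum_{j<i}\lambda_1(\{J(j)\})$): the case $i=1$ is the standing hypothesis that $\lambda_1$ and $\lambda_2$ are in convex order, and the inductive step is exactly the elementary calculus of shadows of \cite{biegelbock} --- that the shadow $\nu_i$ of $\lambda_1(\{J(i)\})\delta_{J(i)}$ in $\eta_i$ exists (equivalently, the window contains $\lambda_1(\{J(i)\})J(i)$, which one reads off by comparing the potential functions $y\mapsto\mu[(y-\cdot)^+]$ of $\lambda_1(\{J(i)\})\delta_{J(i)}$ and $\eta_i$), and that $\sum_{j\geq i+1}\lambda_1(\{J(j)\})\delta_{J(j)}$ and $\eta_i-\nu_i=\eta_{i+1}$ remain in convex order. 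At $i=n+1$ the invariant forces $\eta_{n+1}$ to have mass $0$, whence $\nu_1+\cdots+\nu_n=\lambda_2$ and $\nu:=\sum_i\delta_{J(i)}\times\nu_i$ is the unique coupling satisfying \ref{curtain:ii} with this $J$.

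I expect the only genuinely load-bearing step to be the acyclicity of $\leadsto$ in the \ref{curtain:i}$\Rightarrow$\ref{curtain:ii} direction; the rest is bookkeeping, although some care is needed throughout with atoms sitting at the ``joints'' $c_i,d_i$ and with the possibility that $\JJ\subsetneq\mathbb{R}$, so that the relevant infima and suprema of the $K_a$ need not be finite or lie in $\JJ$.
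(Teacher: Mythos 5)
Your argument is correct, and its skeleton coincides with the paper's: \ref{curtain:ii}$\Rightarrow$\ref{curtain:i} via an explicit carrier built from the fact that a connected part sits on an interval whose interior the residual measure avoids; \ref{curtain:i}$\Rightarrow$\ref{curtain:ii} by identifying one atom of $\lambda_1$ whose kernel is a connected part of $\lambda_2$ and inducting; and the final claim by the shadow calculus (the paper invokes \cite[Lemma~2.8]{juillet} for precisely the inductive step you sketch). Where you genuinely differ is the device for selecting the atom to peel off first. The paper works with the intervals $O_a$, the interiors of the convex hulls of the sections of the carrier $\Gamma$, notes that the curtain condition forces any two of them to be disjoint or nested, picks one that is minimal for inclusion, and then needs a tie-breaking choice among the atoms sharing that same interval (to handle, e.g., one kernel supported on $\{0,1\}$ and another on $\{0,\tfrac{1}{2},1\}$). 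Your directed relation instead compares the support of one kernel with the open hull of another, so antisymmetry plus acyclicity plus the existence of a sink delivers directly an atom $m$ with $K_a\cap J_m=\emptyset$ for all $a\neq m$, and your distribution-function computation (with $s$ absorbing the foreign atom at the lower joint) shows the kernel at $m$ is a connected part; the tie-break is thus built in, which is what your packaging buys at the price of the extra acyclicity lemma. One inaccuracy, not fatal: $\inf K_a$ and $\sup K_a$ need not belong to $K_a$ (it is closed only in $\JJ$, and these may be endpoints of $\JJ$ or infinite), so the asserted inclusion of convex hulls can fail at endpoints; but your cycle argument only needs that the infima and suprema are monotone around the cycle and that the final contradiction involves only the open interval, both of which survive. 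Likewise your appeal to the shadow lemma of \cite{biegelbock}/\cite{juillet} for the last assertion is exactly the paper's own route, so nothing is missing there.
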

\begin{remark}
With $J$ as in \ref{curtain:ii}, $\nu$ is a $<^J$-curtain transport, where $<^J$ is the strict total order relation on $A$ that satisfies $a<^J b$ iff $J^{-1}(a)<J^{-1}(b)$ for $\{a,b\}\subset A$. Conversely, if $R$ is a strict total order on $A$, then any $R$-curtain transport is a curtain transport of $\lambda_1$ to $\lambda_2$.
\end{remark}
\begin{remark}
Either condition implies that $\lambda_1$ and $\lambda_2$ are in convex order.
\end{remark}
\begin{remark}\label{remark:curtains}
The left- (right-) curtain coupling of \citet[Theorem~4.5]{juillet} corresponds to taking in \ref{curtain:ii} $J$ to be increasing (decreasing). On the other hand all curtain couplings are shadow couplings in the sense of \citet{biegelbock}. Specifically, if $J$ is as in \ref{curtain:ii}, then  $\nu$ is the shadow coupling of \cite[Theorem~1.1]{biegelbock} corresponding to the unique lift $\widehat{\lambda_1}$ \cite[p.~2]{biegelbock} of $\lambda_1$ that is concentrated on the graph of the function $T:(0,1)\to A$ specified as follows: for $i\in \{1,\ldots,n\}$, $T\vert_{(\sum_{k=1}^{i-1} \lambda_1(\{J(k)\}),\sum_{k=1}^{i} \lambda_1(\{J(k)\}))}=J(i)$,  the values of $T$ on (the Lebesgue measure null set) $A$ not being important. This can be seen most easily via the characterization of \cite[Theorem~1.1(3)]{biegelbock} (note that our substitution of $\JJ$ for what is $\mathbb{R}$ in \cite{biegelbock} is of no consequence).
\end{remark}
\begin{question}
When $\lambda_1$ is not necessarily finitely supported, are all curtain couplings still shadow couplings? If so, what is the corresponding lift measure? Given the preceding an ``obvious'' conjecture is that they correspond to lifts of the form $\widehat{\lambda_1}=(\mathrm{id}_{(0,1)},g)_\star \mathfrak{l}$, where $\mathfrak{l}$ is Lebesgue measure on $\mathcal{B}_{(0,1)}$, and $g$ ranges over $(\JJ,\mathcal{B}_\JJ)$-valued random variables on $((0,1),\mathcal{B}_{(0,1)},\mathfrak{l})$ with $g_\star \mathfrak{l}=\lambda_1$, whose preimages of singletons are (possibly empty, of course) intervals. To see this in one direction, let $g$ be such and let $\hat{\pi}$ and $\hat{\Gamma}$ be as in \cite[Theorem~1.1]{biegelbock}; we claim that $\pi:=(\mathrm{pr}_{23})_\star \hat{\pi}$, the associated shadow coupling, is a curtain transport, at least provided $g$ is even injective (which is automatic if $\lambda_1$ is diffuse). To see it note that by \cite[p. 143, (5g)]{halmos} we may assume that $\hat{\Gamma}\subset g\times \JJ$, since $g=\{(u,g(u)):u\in (0,1)\}\in \mathcal{B}_{(0,1)\times \JJ}$ carries $\widehat{\lambda_1}=(\mathrm{pr}_{12})_\star \hat{\pi}$. Then set $\Gamma':=\mathrm{pr}_{23}(\hat{\Gamma})$; it is not necessarily Borel, but it is analytic, and in particular it is universally measurable. Therefore there is a $\Gamma\subset \Gamma'$ such that $\Gamma\in \mathcal{B}_{\JJ^2}$ and $\overline{\pi}(\Gamma'\backslash \Gamma)=0$. In consequence $\pi( \Gamma)=\overline{\pi}(\Gamma')=\overline{\hat{\pi}}( (\mathrm{pr}_{23})^{-1}(\Gamma'))=\overline{\hat{\pi}}( (\mathrm{pr}_{23})^{-1}(\mathrm{pr}_{23}(\hat{\Gamma})))\geq \hat{\pi}(\hat{\Gamma})=1$, i.e. $\pi$ is supported by $\Gamma$. Now let $\{(x,y),(x,y_1),(x,y_2),(x',y'),(x',y_1'),(x',y_2')\}\subset \Gamma$, $x\ne x'$. Then there are $\{s,s_1,s_2,s',s'_1,s'_2\}\subset (0,1)$ with $\{(s,x,y),(s_1,x,y_1),(s_2,x,y_2),(s',x',y'),(s'_1,x',y_1'),(s'_2,x',y_2')\}\subset \hat{\Gamma}$. Because $\hat{\Gamma}\subset g\times \JJ$, and since $g$ is injective, we have $s=s_1=s_2$ and $s'=s'_1=s_2'$; automatically $s\ne s'$ since $x\ne x'$. Then \cite[Theorem~1.1(3)]{biegelbock} implies that $y'\notin (y_1,y_2)$ or $y\notin (y_1',y_2')$, according as $s<s'$ or $s'<s$. Thus by definition $\pi$ is a curtain transport.
\end{question}
\begin{example}\label{example:curtains}
It may happen that the curtain martingale transports corresponding to two (or indeed all) $J$ in \ref{curtain:ii} are the same (it is easy to see: take $\lambda_2=\lambda_1$), but in general it can also happen that the curtain martingale transports corresponding to distinct $J$ in \ref{curtain:ii} are all distinct. To see the latter, let $\JJ=\mathbb{R}$, $\lambda_1=\frac{1}{3}(\delta_{-1}+\delta_0+\delta_1)$ and $\lambda_2=\mathrm{Unif}([1,2]\cup [-2,-1])$. Then $\lambda_1$ and $\lambda_2$ are in convex order and clearly for distinct $J$ the curtain martingale transports described in \ref{curtain:ii} are distinct. 
\end{example}
\begin{proof}
If \ref{curtain:ii} holds, then clearly $\nu$ is a martingale coupling of $\lambda_1$ and $\lambda_2$; to check that it is a curtain martingale transport one may take $\Gamma=\cup_{i=1}^n\{J(i)\}\times \mathrm{supp}(\nu_i)$ in Definition~\ref{definition:curtain}. Thus \ref{curtain:ii} implies \ref{curtain:i}. The last statement of the proposition follows by an inductive application of \cite[Lemma~2.8]{juillet}.

Conversely, suppose $\nu$ is a curtain martingale coupling and let $\Gamma$ be as in Definition~\ref{definition:curtain}. For $a\in A$, set $O_a:=[\conv(X_2(\Gamma\cap [\{a\}\times \JJ]))]^\circ$: in words one looks at the range of $\Gamma$ out of $a$, generates the smallest interval that contains this set, and then takes its interior; of course $O_a$ may be empty. Now choose $b\in A$ such that $O_b$ is a maximal element of the set $\mathcal{O}:=\{O_a:a\in A\}$ with respect to reverse inclusion $\supset$. 
Let $A':=\{a\in A:O_{a}=O_b\}$. Then either there is an $a\in A'$ such that $O_a\cap X_2(\Gamma\cap [\{a\}\times \JJ])\ne \emptyset$ and fix such an $a$, else take for $a$ any element of $A'$. Because of the condition of Definition~\ref{definition:curtain} on $\Gamma$, if $a_1\ne a_2$ are elements of $A$, then either $O_{a_1}\cap O_{a_2}=\emptyset$ or else $O_{a_1}$ and $O_{a_2}$ are comparable with respect to inclusion. Using this and the condition on $\Gamma$ again, one sees that if $a'\in A\backslash \{a\}$ and $(a',y')\in \Gamma$, then $y'\notin O_a$. But $\Gamma$ carries $\nu$, which is a martingale transport of $\lambda_1$ to $\lambda_2$. Therefore $\nu(\{a\}\cap\cdot)$ is the unique connected part of $\lambda_2$ of mass  $\lambda_1(\{a\})$ and first moment $\lambda_1(\{a\})a$. Setting $J(1)=a$ and proceeding inductively (via obvious restrictions and renormalizations of $\nu$, $\lambda_1$, $\lambda_2$), we see that \ref{curtain:i} implies \ref{curtain:ii}, which concludes the proof.
\end{proof}

\begin{remark}
In Proposition~\ref{proposition:curtain-transport}, if one does not assume that the support of $\lambda_1$ is finite, then the proof of the  implication \ref{curtain:i}$\Rightarrow$\ref{curtain:ii} breaks down when one chooses a maximal element of $\mathcal{O}$ with respect to $\supset$: every finite linearly ordered subset of $\mathcal{O}$ admits an upper bound in $\mathcal{O}$,  but this is of course no longer true for even just countably infinite sets (so that one could apply Zorn's lemma). In fact we have the following counter-example: Let $(p_i)_{i\in \mathbb{N}}$ be a sequence in $(0,1)$ with $\sum_{i\in \mathbb{N}}p_i=1$. Furthermore, let for each $i\in \mathbb{N}$, $\nu_i$ be a probability on $\mathcal{B}_{\JJ}$, whose support is $[1-1/n,1+1/n]\backslash (1-1/(n+1),1+1/(n+1))$, and whose mean is $1+1/(2n)$. Clearly such constellations (and any number of others that would be just as good) obtain. Then $\sum_{i\in \mathbb{N}}p_i\delta_{1+1/(2n)}\times \nu_i$ is a curtain martingale transport of $\sum_{i\in \mathbb{N}} p_i\delta_{1+1/(2n)}$ to $\sum_{i\in \mathbb{N}}p_i\nu_i$, yet none of the $p_i\nu_i$, $i\in \mathbb{N}$, is a connected part of $\sum_{i\in \mathbb{N}} p_i\nu_i$. (This being so, when the support of $\lambda_1$ is countably infinite, then \cite[Lemma~2.8]{juillet} can still be applied inductively to see that the procedure of \ref{curtain:ii}, with the obvious modifications for the denumerably infinite case, produces a curtain martingale transport.)
\end{remark}
Before giving the last main result of this section, that will connect curtain martingale transports to martingale transport optimization, we must prepare some further groundwork. 

The following auxiliary result is a very special case of \cite[Lemma~1.11]{juillet}. In it, and in the sequel, for real-valued functions $f$ and $g$ defined on $\JJ$, we use the notation $g\otimes f$ to mean the tensor product function defined on $\JJ^2$ with the values $(g\otimes f)((x,y))=g(x)f(y)$ for $(x,y)\in \JJ^2$.

\begin{lemma}\label{lemma:variational}
Let $\lambda_1$ and $\lambda_2$ be probabilities on $\mathcal{B}_{\JJ}$, $g\in b\mathcal{B}_{\JJ}$ and $f\in  L^1(\lambda_2)$. Assume $\pi$ is a martingale transport of $\lambda_1$ to $\lambda_2$ that maximizes $\nu[g\otimes f]$ over all martingale transports $\nu$ of $\lambda_1$ to $\lambda_2$. Then there exists a $\Gamma\in \mathcal{B}_{\JJ^2}$ that carries $\pi$ and such that the following holds: whenever $\alpha$ is a probability on $\mathcal{B}_{\JJ^2}$ with finite support contained in $\Gamma$, then $\alpha'[g\otimes f]\leq\alpha[g\otimes f]$ for any probability  $\alpha'$ on $\mathcal{B}_{\JJ^2}$ for which  ${X_1}_\star  \alpha={X_1}_\star  \alpha'$, ${X_2}_\star \alpha={X_2}_\star  \alpha'$ and $\alpha_x[X_2]=\alpha_{x}'[X_2]$ for ${X_1}_\star\alpha$-a.e. $x\in \JJ$, where $(\alpha_x)_{x\in \JJ}$ and $(\alpha_{x}')_{x\in \JJ}$ are disintegrations of $\alpha$ and $\alpha'$ with respect to ${X_1}_\star\alpha$, respectively.
\end{lemma}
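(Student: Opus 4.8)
This is the statement of Lemma~\ref{lemma:variational}, which the author explicitly advertises as ``a very special case of \cite[Lemma~1.11]{juillet}''. My proof proposal is therefore to derive it by reduction to (a cited) standard variational characterization of optimizers for linear martingale transport, rather than to reprove that characterization from scratch. The core idea behind such a lemma is a ``competitor surgery'' argument: if $\pi$ maximizes the linear functional $\nu\mapsto\nu[g\otimes f]$ over martingale transports, then one cannot locally improve $\pi$ by rerouting a little bit of mass while preserving both marginals and the martingale (barycenter) constraint; encoding ``one cannot locally improve'' as a statement about finitely-supported sub-probabilities concentrated on a fixed carrier set $\Gamma$ is exactly the content of cyclical-monotonicity-type results for MOT.

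\textbf{Step 1: reduce to Juillet's lemma.} First I would check that the hypotheses match: $g\in b\mathcal B_\JJ$ is bounded measurable and $f\in L^1(\lambda_2)$, so $g\otimes f$ is a Borel gain function on $\JJ^2$ that is $\nu$-integrable for every martingale transport $\nu$ of $\lambda_1$ to $\lambda_2$ (since $\vert g\otimes f\vert\le \Vert g\Vert_\infty\,(\vert f\vert\circ X_2)$ and ${X_2}_\star\nu=\lambda_2$). Hence $\nu\mapsto\nu[g\otimes f]$ is a well-defined, finite, affine functional on the (weakly compact, by Prokhorov, since both marginals are tight) set of martingale transports, and a maximizer $\pi$ exists — but here it is assumed to exist, so I just need that $\pi$ lies in the scope of the cited lemma. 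Applying \cite[Lemma~1.11]{juillet} to the cost $c:=g\otimes f$ (which is measurable and integrable against all competitors, the standing hypothesis there) directly yields a Borel set $\Gamma$ carrying $\pi$ with the stated ``finitely-supported competitor'' optimality property. The only genuine point to verify is that the notion of ``competitor'' $\alpha'$ used in the statement — same two marginals as $\alpha$ and same conditional barycenters $\alpha_x[X_2]=\alpha'_x[X_2]$ for a.e.\ $x$ — coincides with the class of local modifications for which \cite[Lemma~1.11]{juillet} asserts non-improvement; this is precisely the ``martingale'' analogue of the classical competitor notion (same marginals) with the extra barycenter-preservation clause that keeps the modified measure a martingale coupling of its own marginals.

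\textbf{Step 2 (in case a self-contained argument is wanted): the surgery argument.} If one does not wish to invoke \cite{juillet} as a black box, the standard route is: (a) choose $\Gamma\in\mathcal B_{\JJ^2}$ carrying $\pi$ on which $g\otimes f$ is finite and, by a measurable-selection/disintegration argument, on which ``the'' $\pi$-a.e.\ relations hold pointwise. (b) Suppose, for contradiction, that some finitely-supported probability $\alpha$ with $\supp\alpha\subset\Gamma$ admits a competitor $\alpha'$ (same marginals, same conditional barycenters) with $\alpha'[g\otimes f]>\alpha[g\otimes f]$. (c) Write $\alpha=\sum_j p_j\,\delta_{(x_j,y_j)}$; using that the marginals and conditional barycenters of $\alpha$ and $\alpha'$ agree, glue the modification into $\pi$: form $\pi':=\pi-\varepsilon\tilde\alpha+\varepsilon\tilde\alpha'$ for a suitable rescaling, where $\tilde\alpha$ is $\alpha$ with its weights capped so that $\varepsilon\tilde\alpha\le\pi$ on the relevant atoms — this is where the finite support of $\alpha$ and the fact that $\pi$ gives positive mass to (neighbourhoods of) points of $\Gamma$ is used. (d) Check $\pi'$ is again a martingale transport of $\lambda_1$ to $\lambda_2$: the marginal equalities of $\alpha,\alpha'$ give ${X_i}_\star\pi'={X_i}_\star\pi=\lambda_i$, and the conditional-barycenter equalities give $\pi'[X_2\mid X_1]=X_1$ a.s. (e) Then $\pi'[g\otimes f]=\pi[g\otimes f]+\varepsilon(\alpha'[g\otimes f]-\alpha[g\otimes f])>\pi[g\otimes f]$, contradicting optimality of $\pi$. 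Running this argument simultaneously over a countable family of ``bad'' configurations and discarding a $\pi$-null set produces the desired single $\Gamma$.

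\textbf{Main obstacle.} The delicate point is (a)+(d) in the self-contained version: ensuring that the ``a.e.'' statements (the martingale property, integrability of $g\otimes f$, and the very definition of disintegration) can be captured on one Borel carrier $\Gamma$ simultaneously, and that the spliced measure $\pi'$ is genuinely a probability (positivity, which needs the capping in (c)) and genuinely a martingale (which needs the barycenter clause, not just the marginal clause). If instead one takes the reduction route of Step~1, the only obstacle is bookkeeping: confirming that the competitor class in our statement is literally the one in \cite[Lemma~1.11]{juillet} — i.e.\ that the additional clause $\alpha_x[X_2]=\alpha'_x[X_2]$ is exactly what distinguishes the martingale setting, and that boundedness of $g$ together with $f\in L^1(\lambda_2)$ places $c=g\otimes f$ within the integrability hypotheses there. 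Given the author's framing (``a very special case of''), I expect the intended proof to be essentially Step~1: a one-paragraph citation with these hypothesis checks made explicit.
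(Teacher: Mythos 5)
Your Step~1 is essentially the paper's proof: it invokes \cite[Lemma~1.11]{juillet} for the cost $c=g\otimes f$ after noting that $g$ bounded together with $f\in L^1(\lambda_2)$ gives the integrability hypothesis and makes $\pi[g\otimes f]$ finite. The one small point the paper spells out that you elide is that \cite{juillet} works on $\mathbb{R}$ while here $\JJ$ is a general open interval, so one first extends all measures and functions to $\mathbb{R}$ by zero off $\JJ$ (optimality of $\pi$ being preserved) and then translates the resulting carrier $\Gamma$ back to $\JJ^2$.
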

\begin{proof}
The setting of  \citet{juillet} is the real line, where we allow $\JJ$ to be an open interval of $\mathbb{R}$. However, if necessary, i.e. when $\JJ\ne \mathbb{R}$, it is straightforward to extend all the measures and functions to $\mathbb{R}$ by setting them equal to $0$ off $\JJ$, and $\pi$ remains optimal in this extended setting. Modulo this the sufficient integrability condition of \cite{juillet} is met because $g$ is bounded and $f$ is integrable for $\lambda_2$, while $\pi$  leads to a finite value of $\pi[g\otimes f]$ for the very same reasons. As a consequence \cite[Lemma~1.11]{juillet} applies, and is easily translated back to the interval $\JJ$ in place of $\mathbb{R}$, if  necessary (i.e. when $\JJ \ne\mathbb{R}$).
\end{proof}
We have next a result which maintains that any optimizer  of the  ``linear'' optimal martingale transport problem, in the special case when the objective functional is of tensor product form, is always a curtain martingale transport; in precise terms: 

\begin{proposition}\label{proposition:curtains-optimal}
Let $\lambda_1$ and $\lambda_2$ be probabilities on $\mathcal{B}_{\JJ}$ of finite mean. 
\begin{enumerate}[(i)]
\item \label{proposition:curtains-optimal:i} Let also $g\in b\mathcal{B}_\JJ$ and $f:\JJ\to \mathbb{R}$ be strictly convex with $\lambda_2[f^+]<\infty$. Assume furthermore $\pi$ is a martingale transport of $\lambda_1$ to $\lambda_2$ that maximizes $\nu[g\otimes f]$ over all martingale transports $\nu$ of $\lambda_1$ to $\lambda_2$. Then $\pi$ is a $<_g$-curtain martingale transport of $\lambda_1$ to $\lambda_2$, where $<_g$ is the  relation on $\JJ$ for which $a<_gb$ iff $g(a)<g(b)$; in particular, if $g$ is injective on a set that carries $\lambda_1$, then $\pi$ is a curtain transport.
\item\label{proposition:curtains-optimal:ii} Suppose now $\lambda_1$ has a finite support and let $\pi$ be a curtain martingale transport of $\lambda_1$ to $\lambda_2$. Then, conversely, for any $f:\JJ\to \mathbb{R}$ that is strictly concave with $\lambda_2[f^-]<\infty$, there exists $g\in b\mathcal{B}_\JJ$ injective on, and vanishing off $\supp(\lambda_1)$, such that $\pi$ uniquely maximizes $\nu[g\otimes f]$ over all martingale transports $\nu$ of $\lambda_1$ to $\lambda_2$. 
\end{enumerate}
\end{proposition}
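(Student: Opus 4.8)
\emph{Plan of proof.} For part~\ref{proposition:curtains-optimal:i} the plan is to test Lemma~\ref{lemma:variational} against one completely explicit three-point competitor. Since a convex function on an open interval has an affine minorant and $\lambda_2$ has finite mean, $\lambda_2[f^-]<\infty$, so $f\in L^1(\lambda_2)$ and Lemma~\ref{lemma:variational} applies; fix the Borel set $\Gamma$ it provides, which carries $\pi$. Suppose, towards a contradiction, that $\{(x',y'),(x,y_1),(x,y_2)\}\subset\Gamma$ with $g(x)<g(x')$ and $y_1<y'<y_2$, and write $y'=ty_1+(1-t)y_2$ with $t\in(0,1)$ (so automatically $x\ne x'$). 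Set $\alpha:=\tfrac12\delta_{(x',y')}+\tfrac t2\delta_{(x,y_1)}+\tfrac{1-t}2\delta_{(x,y_2)}$, a probability supported in $\Gamma$, and $\alpha':=\tfrac12\delta_{(x,y')}+\tfrac t2\delta_{(x',y_1)}+\tfrac{1-t}2\delta_{(x',y_2)}$. One checks at once that $\alpha$ and $\alpha'$ have the same first marginal, the same second marginal, and equal conditional barycenters (namely $y'$, both over $x$ and over $x'$), so Lemma~\ref{lemma:variational} gives $\alpha'[g\otimes f]\le\alpha[g\otimes f]$. But a one-line computation yields $\alpha'[g\otimes f]-\alpha[g\otimes f]=\tfrac12\bigl(g(x')-g(x)\bigr)\bigl(tf(y_1)+(1-t)f(y_2)-f(y')\bigr)$, and this is strictly positive since $g(x')>g(x)$ and, by \emph{strict} convexity of $f$ together with $y_1\ne y_2$, the second factor is strictly positive --- a contradiction. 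Hence $\Gamma$ exhibits $\pi$ as a $<_g$-curtain transport. For the final sentence, intersect $\Gamma$ with $B\times\JJ$ for a carrier $B$ of $\lambda_1$ on which $g$ is injective; in a ``curtain''-forbidden configuration with $x\ne x'$ one then has $g(x)\ne g(x')$, and whichever of $g(x)<g(x')$ or $g(x')<g(x)$ holds produces a forbidden $<_g$-configuration, so $\pi$ is a curtain transport.

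For part~\ref{proposition:curtains-optimal:ii}, write $A:=\supp(\lambda_1)$, $n:=|A|$; again $f\in L^1(\lambda_2)$, and $f$ is continuous (concave on an open interval). By Proposition~\ref{proposition:curtain-transport} there is a bijection $J:\{1,\dots,n\}\to A$ with $\pi=\sum_{i=1}^n\delta_{J(i)}\times\pi_i$, where $\pi_i$ is the connected part of $\lambda_2-(\pi_1+\cdots+\pi_{i-1})$ of mass $\lambda_1(\{J(i)\})$ and first moment $\lambda_1(\{J(i)\})J(i)$. Take $g\in b\mathcal{B}_\JJ$ to vanish off $A$ and to satisfy $g(J(1))>g(J(2))>\cdots>g(J(n))$ (for instance $g(J(i)):=n-i$): this $g$ is injective on $A$, and $<_{-g}$ restricted to $A$ is precisely the strict total order under which $J(1)<\cdots<J(n)$, so $J(1)$ is its minimal element.

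The claim is that $\pi$ uniquely maximizes $\nu\mapsto\nu[g\otimes f]$ over martingale transports $\nu$ of $\lambda_1$ to $\lambda_2$. First, this set is weakly compact (fixed marginals of finite mean), and the functional is weakly continuous on it: as $\lambda_1$ lives on the finite set $A$, weak convergence of transports amounts to weak convergence of every conditional $\nu(X_2\in\cdot\mid X_1=a)$, $a\in A$, and these are uniformly dominated by $\lambda_2/\lambda_1(\{a\})$, so the $\lambda_2$-integrability of the continuous $f$ lets the relevant integrals pass to the limit. Hence a maximizer $\nu^\star$ exists. Applying part~\ref{proposition:curtains-optimal:i} to $-g\in b\mathcal{B}_\JJ$ and the strictly convex $-f$ (note $\lambda_2[(-f)^+]=\lambda_2[f^-]<\infty$ and $\nu[(-g)\otimes(-f)]=\nu[g\otimes f]$), $\nu^\star$ is a $<_{-g}$-curtain transport. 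Now re-run the argument of the implication \ref{curtain:i}$\Rightarrow$\ref{curtain:ii} in Proposition~\ref{proposition:curtain-transport}, but observing that the $<_{-g}$-curtain property \emph{forces} its minimal atom $J(1)$ to be processed first: along a witnessing $\Gamma$, no point of the $\Gamma$-fibre over any $a\in A\setminus\{J(1)\}$ lies strictly between two points of the $\Gamma$-fibre over $J(1)$; since $\Gamma$ carries the martingale $\nu^\star$, this forces the conditional $\nu^\star(X_2\in\cdot,X_1=J(1))$ --- a submeasure of $\lambda_2$ of mass $\lambda_1(\{J(1)\})$ and first moment $\lambda_1(\{J(1)\})J(1)$, concentrated on an interval whose interior carries none of the rest of $\lambda_2$ --- to be the connected part of $\lambda_2$ of that mass and first moment, i.e.\ $\pi_1$. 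Restricting, renormalizing and iterating gives $\nu^\star(X_2\in\cdot,X_1=J(i))=\pi_i$ for every $i$, so $\nu^\star=\pi$; every maximizer therefore equals $\pi$, which is the claim.

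I expect the real obstacle to be the uniqueness in part~\ref{proposition:curtains-optimal:ii}. A direct ``greedy/Lagrangian'' induction --- optimize the innermost conditional via Proposition~\ref{proposition:general}, then choose the weights $g(J(i))$ with large enough gaps to dominate the remaining terms --- fails, because there is in general no \emph{uniform} gap between $\pi_i[f]$ and $\nu_i[f]$ over competitors $\nu\ne\pi$ (the second-best conditional value is only approached, not attained). The argument above bypasses this: Lemma~\ref{lemma:variational}, hence part~\ref{proposition:curtains-optimal:i}, is combinatorial rather than quantitative, so it constrains \emph{every} maximizer to be a $<_{-g}$-curtain transport, and with $g$ injective on $A$ that structure reconstructs $\pi$ rigidly. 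A secondary technical point to handle with care is the weak continuity of the $X_1$-discontinuous integrand $g\otimes f$, which is why the finiteness of $\supp(\lambda_1)$ is used.
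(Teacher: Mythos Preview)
Your proof is correct, and in both parts it takes a route that differs from the paper's.

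For part~\ref{proposition:curtains-optimal:i}, both arguments invoke Lemma~\ref{lemma:variational} and derive a contradiction from a forbidden configuration $\{(x',y'),(x,y_1),(x,y_2)\}\subset\Gamma$. The paper takes an arbitrary $\alpha$ supported on this triple, reformulates the variational inequality as a minimization of $\mu[f]$ over submeasures of ${X_2}_\star\alpha$ with prescribed mass and moment, and appeals to Proposition~\ref{proposition:general} (the unique connected-part optimizer) to reach the contradiction. Your explicit three-point competitor with barycentric weight $t$ is more direct and bypasses Proposition~\ref{proposition:general} entirely; the trade-off is that you lose the conceptual link to connected parts, which the paper uses again later.

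For part~\ref{proposition:curtains-optimal:ii} the difference is more substantial. The paper builds $g$ inductively: it first uses part~\ref{proposition:curtains-optimal:i} to confine every maximizer to the \emph{finite} set $\mathcal{C}$ of curtain transports, then exploits finiteness to get a strictly positive gap $M-m$ between $\pi_{a_{J(1)}}[f]$ and the best competing value among curtain transports whose $a_{J(1)}$-conditional is \emph{not} the connected part, and finally chooses $g(a_{J(1)})$ close enough to $1$ that this gap dominates the remaining terms. Your approach instead fixes any $g$ with $g(J(1))>\cdots>g(J(n))$, obtains a maximizer $\nu^\star$ by compactness, applies part~\ref{proposition:curtains-optimal:i} to see that $\nu^\star$ is a $<_{-g}$-curtain transport, and then argues structurally that for a strict total order on $A$ the corresponding $R$-curtain transport is uniquely determined (by the ordered connected-part construction of Proposition~\ref{proposition:curtain-transport}). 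This is cleaner and avoids quantitative estimates; the paper's route is more hands-on but makes the inductive mechanism explicit.

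One remark on your closing paragraph: your critique of the ``greedy/Lagrangian'' induction --- that there is no uniform gap over \emph{all} competitors --- is well taken, but it does not apply to the paper's actual argument. The paper first restricts (via part~\ref{proposition:curtains-optimal:i}) to the finite set of curtain transports, and over that finite set the gap $M-m$ is genuinely positive; so the paper's inductive construction does go through.
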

\begin{remark}
Note that in \ref{proposition:curtains-optimal:i} we may take, ceteris paribus, $f:\JJ\to \mathbb{R}$ strictly concave with $\lambda_2[f^-]<\infty$ and/or replace ``maximizes'' with ``minimizes'', and still the conclusion that $\pi$ is a curtain transport provided $g$ is injective on a set that carries $\lambda_1$ remains valid (replace $g$ with $-g$ and/or $f$ with $-f$). A similar observation pertains to \ref{proposition:curtains-optimal:ii}.
\end{remark}

\begin{proof}
\ref{proposition:curtains-optimal:i}. Suppose, for a contradiction,  that $\pi$ is not a $<_g$-curtain martingale transport. Let $\Gamma$ be as in Lemma~\ref{lemma:variational} and take  $D:=\{(x',y'),(x,y_1),(x,y_2)\}\subset \Gamma$ with $g(x)<g(x')\text{ and }y_1<y'<y_2$.

 Further let $\alpha$ be any probability on $\mathcal{B}_{\JJ^2}$ with support $D$ --- such probabilities certainly exist --- and consider the problem $\max \alpha'[g\otimes f]$ over all probabilities $\alpha'$ that have the same marginals as $\alpha$ and the same conditional first moments of $X_2$  given $X_1$ as $\alpha$ (as in Lemma~\ref{lemma:variational}). 
Denote $p_x:={X_1}_\star \alpha(\{x\})=\alpha(X_1=x)$ and  $p_{x'}:={X_1}_\star \alpha(\{x'\})=\alpha(X_1=x')$. Then we are maximizing $p_xg(x)\beta_x[f]+p_{x'}g(x')\beta_{x'}[f]$ over probabilities $\beta_x$ and $\beta_ {x'}$ on $\mathcal{B}_{\JJ}$ with $\beta_x[\mathrm{id}]=\alpha[X_2;X_1=x]/p_x$, $\beta_{x'}[\mathrm{id}]=\alpha[X_2;X_1=x']/p_{x'}$ and $p_x\beta_x+p_{x'}\beta_{x'}={X_2}_\star \alpha=:\alpha_2$. In other words we are maximizing $(g(x)-g(x'))\mu[f]$ over measures $\mu$ on $\mathcal{B}_{\JJ}$ with $\mu[1]=\alpha(X_1=x)$, $\mu[\mathrm{id}]=\alpha[X_2;X_1=x]$ and $\mu\leq\alpha_2$. Furthermore, because $g(x)<g(x')$, it is the same as minimizing $\mu[f]$ over the specified class of $\mu$. 

Then, on the one hand, by Lemma~\ref{lemma:variational}, $\alpha(X_2\in \cdot;X_1=x)$ is an optimizer in the preceding. On the other hand, by Proposition~\ref{proposition:general} coupled with Remark~\ref{remark:ancillary}, the unique optimizer to this problem identified there violates the property of $\alpha$ having support $D$, that is, $\alpha(X_2\in\cdot;X_1=x)$ is not a connected part of $\alpha_2$. 

The final statement of this part is a consequence of the fact that  if $g$  is injective on a set that carries $\lambda_1$, then every $<_g$-curtain transport is a curtain transport.

\ref{proposition:curtains-optimal:ii}. Let the support of $\lambda_1$ be the set $A=\{a_1,\ldots,a_n\}$ of size $n\in\mathbb{N}$, let $\pi$ correspond to an enumeration  $J$ of $A$, as in Proposition~\ref{proposition:curtain-transport}\ref{curtain:ii}, and let $\mathcal{C}$ be the collection of all curtain transports of $\lambda_1$ to $\lambda_2$, which is finite by Proposition~\ref{proposition:curtain-transport}. For $c\in\mathcal{C}$ write $(c_x)_{x\in \JJ}$ for the disintegration of $c$ against $\lambda_1$. Set $g$ to be equal to zero off $\supp(\lambda_1)$, without limiting oneself insist further that $g$ is nonnegative with $\sum g=1$, and define the values $g(a_i)$, $i\in \{1,\ldots,n\}$, as follows: let $M:=\pi_{a_{J(1)}}[f]$ and \[m:=\max\{c_{a_{J(1)}}[f]:c\in \mathcal{C}\text{ such that }c_{a_{J(1)}}\text{ is not a connected part of }\lambda_2\}.\] By Proposition~\ref{proposition:general}, $m<M$. Take $g(a_{J(1)})$ close to, but strictly less than $1$, and in any event more than $1/2$, so that no matter what the values of $g$ on $A\backslash \{a_{J(1)}\}$ (save for the requirement of injectivity), any maximizer of $\nu[g\otimes f]$ over all martingale transports $\nu$ of $\lambda_1$ to $\lambda_2$ is attained at a curtain transport $c$ for which $c_{a_{J(1)}}$ is a connected part of $\lambda_2$.  Note indeed that any such maximizer is necessarily a curtain transport by \ref{proposition:curtains-optimal:i}.

We may now iterate this inductively in the obvious manner (discarding $\lambda_1(a_{J(1)})\delta_{a_{J(1)}}$ from $\lambda_1$, $\lambda_1(a_{J(1)})\delta_{a_{J(1)}}\times \pi_{a_{J(1)}}$ from $\lambda_2$, and renormalising), in order to arrive at a $g$ such that any maximizer of $\nu[g\otimes f]$ over all martingale transports $\nu$ of $\lambda_1$ to $\lambda_2$ is equal to $\pi$; there is such a maximizer because the set of all martingale  transports of $\lambda_1$ to $\lambda_2$ is weakly closed, while the map $\nu\mapsto \nu[g\otimes f]$ is weakly continuous; this is easy to see because $\lambda_1$ is finitely supported. The map $g$ obtained in this manner satisfies all the requisite properties (the injectivity of $g$ on $\supp(\lambda_1)$ is ensured by choosing $g(a_{J(1)})>1/2$ and then keeping to this convention in each inductive step).
\end{proof}
Combining Propositions~\ref{proposition:curtains-optimal} and~\ref{proposition:curtain-transport} we have

\begin{corollary}\label{corollary:curtain}
In the setting of Proposition~\ref{proposition:curtain-transport} the statements \ref{curtain:i}  and \ref{curtain:ii} are further equivalent to
\begin{enumerate}[(i)]
\setcounter{enumi}{2}
\item\label{curtain:iii} For some $f:\JJ\to \mathbb{R}$ that is strictly concave with $\lambda_2[f^-]<\infty$ and some $g\in b\mathcal{B}_\JJ$ injective on $A$,  $\nu$ (uniquely) maximizes $\rho[g\otimes f]$ over all martingale transports $\rho$ of $\lambda_1$ to $\lambda_2$. \qed
\end{enumerate}
\end{corollary}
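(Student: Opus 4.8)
The plan is to obtain the corollary by simply bolting together the two halves of Proposition~\ref{proposition:curtains-optimal}. Since Proposition~\ref{proposition:curtain-transport} already supplies the equivalence \ref{curtain:i}$\Leftrightarrow$\ref{curtain:ii}, it suffices to prove \ref{curtain:i}$\Leftrightarrow$\ref{curtain:iii}.

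For \ref{curtain:i}$\Rightarrow$\ref{curtain:iii}, I would first observe that a strictly concave $f\colon\JJ\to\mathbb{R}$ with $\lambda_2[f^-]<\infty$ certainly exists (any bounded strictly concave function will do), fix one such $f$, and then apply Proposition~\ref{proposition:curtains-optimal}\ref{proposition:curtains-optimal:ii} to the curtain martingale transport $\nu$: this produces a $g\in b\mathcal{B}_\JJ$ injective on (and vanishing off) $\supp(\lambda_1)=A$ for which $\nu$ uniquely maximizes $\rho\mapsto\rho[g\otimes f]$ over all martingale transports $\rho$ of $\lambda_1$ to $\lambda_2$. That is precisely \ref{curtain:iii}, and indeed with the parenthetical uniqueness in force.

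For the converse \ref{curtain:iii}$\Rightarrow$\ref{curtain:i}, suppose $f$ is strictly concave with $\lambda_2[f^-]<\infty$ and $g\in b\mathcal{B}_\JJ$ is injective on $A$ with $\nu$ maximizing $\rho[g\otimes f]$. I would rewrite $g\otimes f=(-g)\otimes(-f)$ and note that $-f$ is strictly convex with $\lambda_2[(-f)^+]=\lambda_2[f^-]<\infty$, that $-g\in b\mathcal{B}_\JJ$ remains injective on $A$, and that $A$ carries $\lambda_1$ (as $\lambda_1$ is finitely supported, $\lambda_1(A)=1$). Hence Proposition~\ref{proposition:curtains-optimal}\ref{proposition:curtains-optimal:i} applies verbatim to the pair $(-g,-f)$ and yields that $\nu$ is a $<_{-g}$-curtain martingale transport, hence — $-g$ being injective on a carrier of $\lambda_1$ — a curtain martingale transport, i.e.\ \ref{curtain:i}. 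Equivalently one may just cite the remark following Proposition~\ref{proposition:curtains-optimal}, which records exactly these sign-change variants.

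I do not expect a genuine obstacle here: all the substance lives in Propositions~\ref{proposition:general}, \ref{proposition:curtain-transport} and~\ref{proposition:curtains-optimal}. The only points that warrant a modicum of care are the bookkeeping of the convexity/concavity and maximize/minimize conventions when matching \ref{curtain:iii} to the hypotheses of Proposition~\ref{proposition:curtains-optimal}\ref{proposition:curtains-optimal:i}, checking that the integrability condition on $f$ transfers correctly under $f\mapsto-f$, and noting that the converse direction needs only the weaker statement ``$\nu$ maximizes'' whereas the forward direction delivers the stronger ``$\nu$ \emph{uniquely} maximizes'' for free.
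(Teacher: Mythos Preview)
Your proposal is correct and is exactly the argument the paper has in mind: the corollary is stated with a \qed\ immediately after ``Combining Propositions~\ref{proposition:curtains-optimal} and~\ref{proposition:curtain-transport} we have'', and your two paragraphs simply spell out that combination, including the sign-flip bookkeeping already recorded in the remark following Proposition~\ref{proposition:curtains-optimal}.
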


\begin{remark}
This dovetails with \cite[Theorem~1.1(1), 3rd bullet point on p.~3, Remark~4.6]{biegelbock} (cf. Remark~\ref{remark:curtains}).
\end{remark}

In addition, we can prove the following technical lemma:
\begin{lemma}\label{lemma:concave}
For any given concave $f:\JJ\to \mathbb{R}$, there exists a sequence $(f^m)_{m\in \mathbb{N}}$ of strictly concave functions, mapping $\JJ\to \mathbb{R}$, and such that $\vert f^m(x)-f(x)\vert \leq (a\vert x\vert +b)/m$ for all $x\in \JJ$ and $m\in \mathbb{N}$, for some $\{a,b\}\subset[0,\infty)$.
\end{lemma}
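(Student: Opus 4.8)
The plan is to reduce the statement to the existence of a single strictly concave ``reference'' function on $\JJ$ having at most linear growth, and then take $f^m$ to be $f$ plus a vanishing multiple of that reference function. Concretely, suppose one can find a strictly concave $h\colon\JJ\to\mathbb{R}$ together with constants $\{a,b\}\subset[0,\infty)$ such that $\vert h(x)\vert\leq a\vert x\vert+b$ for all $x\in\JJ$. Then I would set $f^m:=f+\tfrac1m h$ for $m\in\mathbb{N}$. Each $f^m$ maps $\JJ$ to $\mathbb{R}$, and, being the sum of the concave function $f$ and the strictly concave function $\tfrac1m h$, is itself strictly concave (strictness in the perturbing summand forces strictness of the sum, since for $x\ne y$ and $t\in(0,1)$ the midpoint-type inequality is strict already in the $h$-term and merely non-strict in the $f$-term). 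Moreover $\vert f^m(x)-f(x)\vert=\tfrac1m\vert h(x)\vert\leq(a\vert x\vert+b)/m$ for all $x\in\JJ$ and $m\in\mathbb{N}$, which is exactly the asserted estimate.

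It then remains only to exhibit such an $h$. I would take $h$ to be the restriction to $\JJ$ of the function $x\mapsto-\sqrt{1+x^2}$. This is smooth on all of $\mathbb{R}$, and a direct computation gives second derivative $-(1+x^2)^{-3/2}<0$ everywhere, so $h$ is strictly concave; and $\sqrt{1+x^2}\leq 1+\vert x\vert$, so the growth bound holds with $a=b=1$, uniformly over every non-empty open interval $\JJ$.

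The only point worth flagging is that on an unbounded $\JJ$ (for instance $\JJ=\mathbb{R}$ or $\JJ=(0,\infty)$) one cannot use the naive candidate $h(x)=-x^2$, whose size grows quadratically; it is precisely the linear-growth slack built into the statement (the bound $a\vert x\vert+b$ rather than a constant) that makes a single uniform choice such as $-\sqrt{1+x^2}$ permissible. On a bounded interval one could instead simply take $h(x)=-x^2$ and absorb its now-bounded size into $b$, but the $-\sqrt{1+x^2}$ choice dispatches all cases simultaneously, so I anticipate no genuine obstacle in carrying out this argument.
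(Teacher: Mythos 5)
Your proof is correct and follows essentially the same route as the paper's: both perturb $f$ by $\frac1m$ times a fixed strictly concave function of at-most-linear growth, with the paper obtaining such a function by integrating a bounded strictly decreasing $D$ from a basepoint $x_0$, while you simply exhibit the explicit example $-\sqrt{1+x^2}$. The two constructions are interchangeable here (indeed $-\sqrt{1+x^2}$ is exactly what the paper's recipe yields with $D(y)=-y/\sqrt{1+y^2}$ and $x_0=0$), so this is the same argument in slightly more concrete form.
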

\begin{proof}
Fix an $x_0\in \JJ$ and take any function $D\in \mathcal{B}_\JJ/\mathcal{B}_{\mathbb{R}}$, strictly decreasing and bounded (such functions certainly exist). Then set $f^m(x):=f(x_0)+\int_{x_0}^x(f'_-(y)+\frac{D(y)}{m})dy=f(x)+\frac{1}{m}\int_{x_0}^xD(y)dy$ for $x\in \JJ$, $m\in \mathbb{N}$.  
\end{proof}
\begin{remark}
It is clear that in the preceding proof, if $\JJ\ne \mathbb{R}$ (but not otherwise), then $D$ can be chosen even interable w.r.t. Lebesgue measure, and one gets  uniform convergence of $(f_m)_{m\in \mathbb{N}}$ to $f$. 
\end{remark}
We now state the main result of this section.
\begin{theorem}\label{corollary:extremal-points}
Let $\lambda_1$ and $\lambda_2$ be in convex order and let $f:\JJ\to \mathbb{R}$ be concave with $\lambda_2[f^-]<\infty$. Assume $\lambda_1$ is supported by the finite set $\{a_1,\ldots,a_n\}$ of cardinality $n\in \mathbb{N}$. Set  $\mathcal{M}:= \{\text{martingale transports of }\lambda_1\text{ to }\lambda_2\}$ and let for $\nu\in \mathcal{M}$, $\nu_i:=\nu(X_2\in \cdot \, \vert X_1=a_i)$, $i\in \{1,\ldots,n\}$, be the disintegration of $\nu$ against $\lambda_1$. Then $\mathcal{X}:=\{(\nu_1[f],\ldots,\nu_n[f]):\nu\in \mathcal{M}\}$ is a convex compact subset of $\mathbb{R}^n$, which is equal to the convex hull of $\mathcal{E}:=\{(\nu_1[f],\ldots,\nu_n[f]):\nu\text{ a curtain martingale transport of $\lambda_1$ to $\lambda_2$}\}$.
\end{theorem}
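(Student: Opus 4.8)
The plan is to prove the two inclusions $\mathcal{X}\supseteq\conv(\mathcal{E})$ and $\mathcal{X}\subseteq\conv(\mathcal{E})$ after first recording the easy structural facts about $\mathcal{X}$. For convexity of $\mathcal{X}$: the set $\mathcal{M}$ of martingale transports is convex, and since $\lambda_1$ is finitely supported the map $\nu\mapsto(\nu_1[f],\dots,\nu_n[f])$ is \emph{affine} in $\nu$ (each $\nu_i$ depends affinely on $\nu$ because $\lambda_1(\{a_i\})$ is a fixed positive weight, and $\mu\mapsto\mu[f]$ is linear), so $\mathcal{X}$ is a convex subset of $\mathbb{R}^n$. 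For compactness: $\mathcal{M}$ is weakly compact (it is tight — both marginals are fixed — and weakly closed), and one checks that $\nu\mapsto(\nu_1[f],\dots,\nu_n[f])$ is weakly continuous; here the finite support of $\lambda_1$ together with the concavity bound $\lambda_2[f^-]<\infty$ (and $\lambda_2[f^+]<\infty$ by Jensen, as in the Remark after Proposition~\ref{proposition:general}) makes $\nu\mapsto\nu[\mathbbm{1}_{\{a_i\}}\otimes f]=\lambda_1(\{a_i\})\nu_i[f]$ continuous. Since $\mathcal{E}\subseteq\mathcal{X}$ and $\mathcal{X}$ is convex, $\conv(\mathcal{E})\subseteq\mathcal{X}$ is immediate.

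The substance is the reverse inclusion $\mathcal{X}\subseteq\conv(\mathcal{E})$. Both $\mathcal{X}$ and $\conv(\mathcal{E})$ are compact convex subsets of $\mathbb{R}^n$ ($\mathcal{E}$ is finite by Proposition~\ref{proposition:curtain-transport}), so by the supporting/separating hyperplane theorem it suffices to show that for every direction $c=(c_1,\dots,c_n)\in\mathbb{R}^n$ one has $\sup_{x\in\mathcal{X}}\langle c,x\rangle=\max_{x\in\mathcal{E}}\langle c,x\rangle$, i.e. the linear functional $x\mapsto\langle c,x\rangle$ attains its maximum over $\mathcal{X}$ at a point of $\mathcal{E}$. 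Unravelling, $\langle c,x\rangle$ evaluated at $(\nu_1[f],\dots,\nu_n[f])$ equals $\sum_i c_i\nu_i[f]$. I would like to say this is $\nu[g\otimes f]$ for a suitable $g$ and invoke Proposition~\ref{proposition:curtains-optimal}\ref{proposition:curtains-optimal:i}, but that proposition needs $f$ strictly convex (or strictly concave) and $g$ injective. So the plan has two refinements. First, reduce to $g$ with distinct values: if some $c_i$ coincide, perturb $c$ slightly so that the $c_i$ become distinct; by continuity of the linear functional and compactness it suffices to prove the claim for a dense set of directions, hence we may assume $c$ has pairwise distinct entries, which gives an injective $g$ on $A=\supp(\lambda_1)$ (set $g(a_i):=c_i$ on $A$ and $g:=0$ off $A$). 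Second, handle the non-strict concavity of $f$ via Lemma~\ref{lemma:concave}: approximate $f$ by strictly concave $f^m$ with $|f^m-f|\le(a|x|+b)/m$; since $\lambda_2$ has finite mean, $|\nu_i[f^m]-\nu_i[f]|\le\lambda_1(\{a_i\})^{-1}(a\,\mu_2[|{\cdot}|]+b)/m\to0$ uniformly over $\nu\in\mathcal{M}$, so $\mathcal{X}^m\to\mathcal{X}$ and likewise the curtain-transport point sets converge; crucially the \emph{set of curtain transports} of $\lambda_1$ to $\lambda_2$ does not depend on $f$ (Proposition~\ref{proposition:curtain-transport}), only the evaluation map does, so $\mathcal{E}^m$ is obtained from the same finite family of transports. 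Taking $m\to\infty$ transfers the identity $\sup_\mathcal{X}\langle c,\cdot\rangle=\max_\mathcal{E}\langle c,\cdot\rangle$ from the strictly concave case to the general one.

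In the strictly concave case the argument is then: with $g$ injective on $A$ and vanishing off $A$, consider a maximizer $\pi\in\mathcal{M}$ of $\nu\mapsto\nu[g\otimes f]$ (which exists by the weak compactness/continuity noted above). Replacing $f$ by $-f$ and $g$ by $-g$ as in the Remark after Proposition~\ref{proposition:curtains-optimal} if needed, Proposition~\ref{proposition:curtains-optimal}\ref{proposition:curtains-optimal:i} applies: $\pi$ is a $<_g$-curtain transport, and since $g$ is injective on $A$ which carries $\lambda_1$, $\pi$ is a curtain transport, so its image point lies in $\mathcal{E}$. Because $\nu[g\otimes f]=\sum_i g(a_i)\nu_i[f]\lambda_1(\{a_i\})=\sum_i c_i'\nu_i[f]$ for the appropriately rescaled direction $c_i':=\lambda_1(\{a_i\})g(a_i)$ — and rescaling the entries of $c$ by fixed positive constants does not change which points maximize over a convex set once we also allow arbitrary sign/magnitude, so a harmless bookkeeping step lets us realise any target direction $c$ this way — the maximum of $\langle c,\cdot\rangle$ over $\mathcal{X}$ is attained at a point of $\mathcal{E}$, as desired.

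The main obstacle I anticipate is precisely this bookkeeping: Proposition~\ref{proposition:curtains-optimal}\ref{proposition:curtains-optimal:i} gives us optimizers of $\nu[g\otimes f]$ only for \emph{bounded} $g$, and the functional we must control, $\sum_i c_i\nu_i[f]$, carries the fixed weights $\lambda_1(\{a_i\})$, so one must carefully check that (a) every direction $c\in\mathbb{R}^n$ (after the perturbation making its entries distinct) is of the form $c_i=\lambda_1(\{a_i\})g(a_i)$ for some $g\in b\mathcal{B}_\JJ$ injective on $A$ — which is clear since the $\lambda_1(\{a_i\})$ are strictly positive — and (b) the sign of the functional is handled by the $\pm$ symmetry, so that "maximize" vs "minimize" and "concave" vs "convex" line up correctly. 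Once these are pinned down the separating-hyperplane conclusion is routine, but getting the direction/weight/sign dictionary exactly right, together with the two limiting arguments (perturbing $c$, and $f^m\to f$), is where the care is needed.
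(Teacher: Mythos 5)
Your proof is correct, and it matches the paper's argument in the strictly concave case (show each linear functional $\langle c,\cdot\rangle$ attains its max over $\mathcal{X}$ at a point of $\mathcal{E}$ by choosing $g$ with $\nu[g\otimes f]=\sum_i c_i\nu_i[f]$, perturbing $c$ so that $g$ is injective, and invoking Proposition~\ref{proposition:curtains-optimal}\ref{proposition:curtains-optimal:i}; then conclude by separating hyperplane). For the passage from strictly concave to merely concave $f$, however, you take a genuinely different route: the paper fixes $\nu\in\mathcal{M}$, writes $(\nu_1[f^m],\ldots,\nu_n[f^m])$ as an explicit convex combination $\sum_{c\in\mathcal{C}}\lambda^m_c(c_1[f^m],\ldots,c_n[f^m])$ over the (finite, $f$-independent) set $\mathcal{C}$ of curtain transports, extracts a subsequential limit $\lambda$ of the coefficient vectors $\lambda^m$, and passes to the limit in the display by dominated convergence; you instead observe that the uniform bound $\vert f^m-f\vert\le(a\vert\cdot\vert+b)/m$ gives Hausdorff convergence of both $\mathcal{X}^m\to\mathcal{X}$ and $\mathcal{E}^m\to\mathcal{E}$, so the support-function identity $\sup_{\mathcal{X}^m}\langle c,\cdot\rangle=\max_{\mathcal{E}^m}\langle c,\cdot\rangle$ survives the limit $m\to\infty$, and you apply the separating-hyperplane step once at the very end. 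Both are valid; yours avoids subsequence extraction of coefficient vectors at the cost of appealing to Hausdorff convergence and the continuity of support functions.

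Two small corrections of bookkeeping, neither a real gap. First, where you perturb $c$ ``so that the $c_i$ become distinct,'' what you actually need is that the ratios $c_i/\lambda_1(\{a_i\})$ be pairwise distinct, since those are the values $g$ must take on $A$ for $\nu[g\otimes f]=\sum_i c_i\nu_i[f]$, and it is $g$ (not $c$) that must be injective; the set of such $c$ is still open and dense, so the density argument goes through unchanged (the paper phrases the perturbation correctly in these terms). Second, the parenthetical claim that ``rescaling the entries of $c$ by fixed positive constants does not change which points maximize over a convex set'' is false in general and is also unnecessary: the correct observation, which you make in the same breath, is simply that for any $c$ with distinct ratios, $g(a_i):=c_i/\lambda_1(\{a_i\})$ is bounded, injective on $A$, and satisfies $\nu[g\otimes f]=\sum_i c_i\nu_i[f]$ exactly.
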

\begin{remark}
We may take, ceteris paribus, $f:\JJ\to \mathbb{R}$ convex with $\lambda_2[f^+]<\infty$, and the same conclusion remains valid.
\end{remark}
\begin{example}\label{example:no-improvement}
The conclusion of Theorem~\ref{corollary:extremal-points} cannot be improved in the sense that in general all the points of $\mathcal{E}$ will be vertices of $\mathcal{X}$. To see this take $\lambda_1$ and $\lambda_2$ as in Example~\ref{example:curtains} and $f=\mathrm{id}_\mathbb{R}^2$. We may take the enumeration $a_1=-1$, $a_2=0$ and $a_3=1$. Note that for $b\in \mathbb{R}^3$ and $\nu\in \mathcal{M}$, $b\cdot (\nu_1[f],\nu_2[f],\nu_3[f])=\sum_{i=1}^3b_i\nu_i[\mathrm{id}_\mathbb{R}^2]$. As $b$ ranges over all the permutations of $\{-1,0,1\}$, it then follows from Proposition~\ref{proposition:general},  that the linear functional $x\mapsto b\cdot x$ will attain its unique maximum on  $\mathcal{X}$ at all of the members of $\mathcal{E}$, i.e. for each permutation $b$ of $\{-1,0,1\}$, a different member of $\mathcal{E}$ will be the unique maximum of $x\mapsto b\cdot x$ on $\mathcal{X}$. Therefore in this case each member of $\mathcal{E}$ is a vertex of $\mathcal{X}$.
\end{example}

\begin{remark}
More generally, if $f$ is strictly concave, it follows from Proposition~\ref{proposition:curtains-optimal}\ref{proposition:curtains-optimal:ii}, that all the members of $\mathcal{E}$ are vertices of $\mathcal{X}$.
\end{remark}
\begin{proof}
It is clear that $\mathcal{X}$ is a convex subset of $\mathbb{R}^n$. To see that it is compact we argue as follows. 
\begin{itemize}
\item The map $\mathcal{M}\ni\nu \mapsto (\nu_1[f],\ldots,\nu_n[f])\in \mathbb{R}^n$  is continuous in the weak topology on $\mathcal{M}$: first one sees that $\mathcal{M}\ni \nu\mapsto \nu_i$ is weakly continuous for each $i\in \{1,\ldots,n\}$; then to account for  $f$ being just integrable for $\lambda_2$ and not bounded (as a concave map on an open interval, certainly it is continuous), approximate $f$ by truncating it, and exploit the fact that the $\nu_i$, $i\in \{1,\ldots,n\}$, are bounded by $\lambda_2/(\lambda_1(\{a_1\})\land \cdots\land \lambda_1(\{a_n\}))$, uniformly in $\nu\in \mathcal{M}$.
\item By a similar token, $\mathcal{M}$ is weakly closed in the set of all probability measures on $\mathcal{B}_{\JJ^2}$; besides, it is weakly relatively compact therein by Prokhorov's Theorem, since it is tight, which latter fact comes finally for instance from the simple estimate $\nu\leq c\times \lambda_2 $ for $\nu\in \mathcal{M}$, where $c$ is the counting measure on $\{a_1,\ldots,a_n\}$. 
\end{itemize}
By the preceding $\mathcal{X}$ is the continuous image of a compact set, thus compact.

It remains to argue that $\mathcal{X}=\conv(\mathcal{E})$. In order to verify this, assume $f$ is strictly concave in the first instance. We show first that every linear functional on $\mathbb{R}^n$ reaches its maximum on $\mathcal{X}$ in a point of $\mathcal{E}$. Then let $b\in \mathbb{R}^n$. We want to show that $\sup_{x\in \mathcal{X}}\langle b,x\rangle$  is attained in a point of $\mathcal{E}$. By a continuity argument we may assume that the $\frac{b_i}{\lambda_1(\{a_i\})}$, $i\in \{1,\ldots,n\}$, are  pairwise distinct. Then, for $\nu\in \mathcal{M}$, we can write $\langle b,(\nu_1[f],\ldots,\nu_n[f])\rangle=\sum_{i=1}^nb_i\nu_i[f]=\nu[g\otimes f]$, where $g(a_i):=\frac{b_i}{\lambda_1(\{a_i\})}$ for $i\in \{1,\ldots,n\}$ and $g$ vanishes off $\supp(\lambda_1)$; Proposition~\ref{proposition:curtains-optimal}\ref{proposition:curtains-optimal:i} yields the conclusion. Suppose now per absurdum that the convex hull of $\mathcal{E}$ is strictly smaller than $\mathcal{X}$; let $x\in \mathcal{X}\backslash \conv(\mathcal{E})$. By the hyperplane separation theorem, there is a $b\in \mathbb{R}^n$ such that $\langle b,x\rangle$ is strictly larger than $\max_{y\in \conv(\mathcal{E})}\langle b,y\rangle$. But that means that there is a linear functional on $\mathbb{R}^n$ that is not maximized on $\mathcal{X}$ by a point in $\mathcal{E}$, a contradiction.

Now suppose $f$ is merely concave and let $\CC$ be the set of curtain martingale transports of $\lambda_1$ to $\lambda_2$. In order to show that still $\mathcal{X}=\conv(\mathcal{E})$, let, via Lemma~\ref{lemma:concave},  $(f^m)_{m\in \mathbb{N}}$ be a sequence of strictly concave functions, mapping $\JJ\to \mathbb{R}$, uniformly integrable w.r.t. $\lambda_2$ and converging to $f$. It is only non-trivial to argue that each $x\in \mathcal{X}$ is a convex combination of the elements of $\mathcal{E}$. But, by what we have just  shown above, given a $\nu\in \mathcal{M}$, there exists a probability mass function (p.m.f.) $\lambda^m=(\lambda^m_c)_{c\in \CC}$ on $\CC$, such that $$(\nu_1[f^m],\ldots,\nu_n[f^m])=\sum_{c\in \mathcal{C}}\lambda^m_c(c_1[f^m],\ldots,c_n[f^m]).$$ The set of p.m.f. on $\CC$ being compact, by passing to a subsequence if necessary, we may assume that $\lambda^m$ converges (pointwise) to a p.m.f. $\lambda$ on $\CC$ as $m\to\infty$. Then one can pass to the limit $m\to\infty$ in the preceding display by dominated convergence (because of the uniform integrability of $(f^m)_{m\in \mathbb{N}}$ w.r.t. $\lambda_2$ /and hence w.r.t. all the $\nu_i$, $c_i$ involved in this display/).
\end{proof}

\begin{question}
If in Theorem~\ref{corollary:extremal-points} $\lambda_1$ is not necessarily supported by a finite set, and for a $\nu \in \mathcal{M}$ one defines the element $\nu^*:=(\JJ\ni x\mapsto \nu_x[f])$ of $L^1(\lambda_1)$, where $(\nu_x)_{x\in \JJ}$ is the disintegration of $\nu$ against $\lambda_1$, then one might well ask whether or not/conjecture that the convex set $\mathcal{X}:=\{\nu^*:\nu\in \mathcal{M}\}$ is the convex hull, in $L^1(\lambda_1)$, of $\{\nu^*:\nu\text{ a curtain martingale transport of $\lambda_1$ to $\lambda_2$}\}$. It is not immediately clear, however, how the above proof could be extended to cover this more general situation. Moreover, the practical usefulness of such a result would presumably be quite limited: (i) we have no procedure by means of which to determine all (or indeed any of) the curtain martingale transports when $\lambda_1$ is not finitely (countably) supported; (ii) even granted those, in the context of maximizing $\lambda_1[G(\nu^*)]$ over $\nu\in \mathcal{M}$ for a suitable map $G:\mathcal{X}\to \mathbb{R}$ (cf. Meta-corollary~\ref{remark:fundamental}), one is still looking at the difficult problem of optimizing $\lambda_1[G(f)]$ over $f$ belonging to the convex hull of the (what will presumably typically be infinite) set of curtain martingale transports of $\lambda_1$ to $\lambda_2$ (though at least the latter should be orders of magnitude easier than a direct optimization over $\mathcal{M}$).
\end{question}

\begin{meta-corollary}\label{remark:fundamental}
Retain the conditions of Theorem~\ref{corollary:extremal-points} and assume there is given a map $G:\mathcal{X}\to \mathbb{R}$. Then the optimization of $J(\nu):=G(\nu_1[f],\ldots,\nu_n[f])$ over $\nu\in \mathcal{M}$, at least to the extent of finding \emph{an} optimizer, reduces to the two subproblems:
\begin{enumerate}[(1)]
\item\label{reduce:one} the identification of the (finite) set $\mathcal{C}$ of curtain martingale transports (and correspondingly of the set $\mathcal{E}$), through Proposition~\ref{proposition:curtain-transport}; followed by 
\item\label{reduce:two} the ``classical'' optimization of the function $G$ over the compact convex set $\mathcal{X}=\conv(\mathcal{E})$.
\end{enumerate}
``Reduces'' in the sense that if $x$ maximizes (minimizes) $G$ over $\mathcal{X}$, then $x$ may be written as a convex combination $x=\sum_{\nu\in \mathcal{C}}\rho_\nu(\nu_1[f],\ldots,\nu_n[f])$, whence $\sum_{\nu\in \mathcal{C}}\rho_\nu \nu$ maximizes (minimizes) $G$ over $\mathcal{M}$. 
\qed
\end{meta-corollary}
\begin{remark}\label{remark:meta-corollary}
By means of Meta-corollary~\ref{remark:fundamental}, the optimization of a great variety of martingale transport problems when the first marginal is finitely supported is, at least in a sense, canonically reduced to two separate problems. The first, \ref{reduce:one}, is ``universal'',  independent of the specifics of the optimization problem, and enabled by the identification of Proposition~\ref{proposition:curtain-transport}. The second, \ref{reduce:two}, is specific to the given problem, but it consists simply in the optimization of a function over a compact convex polytope (whose vertices belong to a known finite set) of an Euclidean space. When the problem is one of minimization and $G$ is concave, then an optimizer can be found amongst the vertices of the polytope. The main drawback when it comes to the practical implementation of this programme is that in Proposition~\ref{proposition:curtain-transport} the size of the space of martingale transports that one must check against is $n!$, where $n$ is the size of the support of the first marginal, and this grows prohibitively fast as $n$ increases. 
\end{remark}

\section{A family of non-linear martingale transport optimizations}\label{section:main}
In this section we turn our attention to the family of problems \eqref{mtg-problem-intro}. Throughout, let $\gamma:\JJ\to \mathbb{R}$ be convex and $\phi:[0,\infty)\to \mathbb{R}$ be 
 concave. Still $\JJ$ is a non-empty open interval of $\mathbb{R}$.
\subsection{Introducing the family of problems and some general considerations}\label{subsection:intro}
As already indicated in the Introduction, recalling it  here for the reader's convenience, we will consider, for $\mu_1$ and $\mu_2$, probability measures on $\mathcal{B}_{\JJ}$ of finite mean, for which $\mu_1[\gamma^+]<\infty$, $\mu_2[\gamma^+]<\infty$, and in convex order, the optimization problem

\begin{equation}\label{eq:opt}
\max_{\nu\in \mathcal{M}}J(\nu),\text{ where }J(\nu):=\nu[V_\nu]
\text{ with } V_\nu:=\phi(\nu[\gamma(X_2)\vert X_1]-\gamma(X_1))
\end{equation}
$$\text{ for }\nu \in \mathcal{M}:=\{\text{martingale transports of $\mu_1$ to $\mu_2$}\}.$$

\begin{remark}
By Jensen's inequality this is all well-defined. Because $\mu_1$ and $\mu_2$ are in convex order,  $\MM$ is non-empty.
\end{remark}

\begin{remark}
Of course one can also look at the analogue of \eqref{eq:opt} with $\min$ replacing $\max$. We will make suitable remarks, where it will not be anyway obvious, to what extent the analysis carries over to cover the situation of minimization: in fact it will be so only when $\mu_1$ has a finite support, whereas the remainder of our arguments depend quite delicately on the problem being one of maximization. 
\end{remark}

\begin{remark}
We take the point of view that the quantity $J(\nu)$ stands for $\nu[\phi(\nu[\gamma(X_2)\vert X_1]-\gamma(X_1))]$ whenever this expression is well-defined (even if $\nu\notin \mathcal{M}$).
\end{remark}

\begin{definition}
Let $\kappa_1$, $\kappa_2$ and $\kappa_3$ be probabilities on $\mathcal{B}_\JJ$. Given a martingale transport $\rho$ from $\kappa_1$ to $\kappa_2$ and a martingale transport $\eta$ from $\kappa_2$ to $\kappa_3$, let $(\rho_x)_{x\in \JJ}$ (resp. $(\eta_x)_{x\in \JJ}$) be a disintegration of $\rho$ (resp. $\eta$) against $\kappa_1$ (resp. $\kappa_2$). Then we define $\rho\star\eta$ to be the martingale transport of $\kappa_1$ to $\kappa_3$ with disintegration against $\kappa_1$ given by the family $(\int\eta_y(\cdot)\rho_x(dy))_{x\in \JJ}$.
\end{definition}
The next proposition gathers some basic properties of the family of problems \eqref{eq:opt}. In particular item \ref{trivia:ii} identifies a monotonicity property of $ \sup_{\nu\in \mathcal{M}}J(\nu)$ in the first marginal $\mu_1$ (relative to the convex order of measures) that will later be instrumental in the proof of an ``approximation'' theorem (Theorem~\ref{theorem:general} below).
\begin{proposition}\label{proposition:trivia}
We have the following assertions:
\begin{enumerate}[(i)]
\item\label{trivia:i} $\MM$ is a weakly compact convex set. 
\item\label{trivia:ii} Assume $\phi$ is nondecreasing on $[0,2\Vert\gamma\Vert_\infty)$. Suppose $\mu_1'$ is another probability measure on $ \mathcal{B}_{\JJ}$ in convex order w.r.t. $\mu_1$ (and hence $\mu_2$) and let $\MM'$  be defined as $\MM$ above but with $\mu_1'$ replacing $\mu_1$. Then, if $\rho$ is a martingale transport of $\mu_1'$ to $\mu_1$, one has that $J(\rho\star \nu)\geq J(\nu)$ for all $\nu\in \MM$. In particular $\sup_{\nu'\in \mathcal{M}'}J(\nu')\geq \sup_{\nu\in \mathcal{M}}J(\nu)$.
\item\label{trivia:ii'}  Again assume $\phi$ is nondecreasing on $[0,2\Vert\gamma\Vert_\infty)$. Suppose $\mu_2'$ is another probability measure on $ \mathcal{B}_{\JJ}$, with $\mu_2$ in convex order w.r.t. $\mu_2'$, and let $\MM'$ be defined as $\MM$ above but with $\mu_2'$ replacing $\mu_2$. Then, if $\rho$ is a martingale transport of $\mu_2$ to $\mu_2'$, one has that $J(\nu\star \rho)\geq J(\nu)$ for all $\nu\in \MM$. In particular $\sup_{\nu'\in \mathcal{M}'}J(\nu')\geq \sup_{\nu\in \mathcal{M}}J(\nu)$.
\item\label{trivia:iii} Assume $\phi$ is continuous. If $\mu_1$ is carried by a finite set $S$, or if it is carried by a denumerable set $S$ having no limit points in $\JJ$ and $\phi$ is bounded, then the functional $J$ is continuous in the weak topology on $\mathcal{M}$. 
\end{enumerate}
\end{proposition}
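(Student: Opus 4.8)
The plan is to treat the four items in turn, with \ref{trivia:ii} and \ref{trivia:ii'} sharing a common Jensen mechanism and \ref{trivia:iii} being an exercise in weak continuity.

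\emph{Part \ref{trivia:i}.} Convexity is immediate: a convex combination of martingale transports of $\mu_1$ to $\mu_2$ has the right marginals and still satisfies $\nu[(X_2-X_1)h(X_1)]=0$ for every $h$, hence is one. For compactness I would exhibit $\MM$ as a weakly closed subset of the set $\Pi$ of all couplings of $\mu_1$ and $\mu_2$; $\Pi$ is weakly compact by Prokhorov's theorem (tight, since $\mu_1,\mu_2$ are tight Borel measures on the Polish space $\JJ$, and weakly closed since $\nu\mapsto{X_i}_\star\nu$ is weakly continuous). To see $\MM$ is weakly closed in $\Pi$, write $\MM=\bigcap_{h\in C_b(\JJ)}\{\nu\in\Pi:\nu[(X_2-X_1)h(X_1)]=0\}$ — the martingale property is equivalent to the vanishing of all these functionals, since the finite signed measure on $\mathcal{B}_\JJ$ with $\mu_1$-density $x\mapsto\nu[X_2-X_1\vert X_1=x]$ is determined by its integrals against $C_b(\JJ)$ — and observe that each $\nu\mapsto\nu[(X_2-X_1)h(X_1)]$ is weakly continuous on $\Pi$: its integrand, though unbounded, is dominated by $\Vert h\Vert_\infty(\vert X_1\vert+\vert X_2\vert)$, and $(x,y)\mapsto\vert x\vert+\vert y\vert$ is uniformly integrable over $\Pi$ because $\mu_1,\mu_2$ have finite mean (one bounds $\int_{\{\vert X_1\vert+\vert X_2\vert>R\}}(\vert X_1\vert+\vert X_2\vert)\,\mathrm{d}\nu$ by $2\int_{\{\vert X_1\vert>R/2\}}\vert X_1\vert\,\mathrm{d}\mu_1+2\int_{\{\vert X_2\vert>R/2\}}\vert X_2\vert\,\mathrm{d}\mu_2$, which tends to $0$ uniformly in $\nu\in\Pi$). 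A weakly closed subset of a weakly compact set is weakly compact.

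\emph{Parts \ref{trivia:ii} and \ref{trivia:ii'}.} Here I would first prove the \emph{pointwise} inequalities $J(\rho\star\nu)\geq J(\nu)$, resp.\ $J(\nu\star\rho)\geq J(\nu)$, and then pass to suprema. For a martingale transport $\sigma$ of a probability $\kappa$ (to any target), with disintegration $(\sigma_x)_{x\in\JJ}$ against $\kappa$, write $v_\sigma(x):=\phi(\sigma_x[\gamma]-\gamma(x))$, so that $J(\sigma)=\kappa[v_\sigma]$; by Jensen's inequality (convexity of $\gamma$, martingale property) the argument of $\phi$ here, and every argument of $\phi$ below, lies in $[0,2\Vert\gamma\Vert_\infty)$, where $\phi$ is nondecreasing by hypothesis. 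For \ref{trivia:ii'}, put $\eta:=\nu\star\rho$; then $\eta_x[\gamma]=\int\rho_z[\gamma]\,\nu_x(\mathrm{d}z)\geq\int\gamma(z)\,\nu_x(\mathrm{d}z)=\nu_x[\gamma]$ (convexity of $\gamma$, martingale property of $\rho$), hence $\eta_x[\gamma]-\gamma(x)\geq\nu_x[\gamma]-\gamma(x)\geq0$, and monotonicity of $\phi$ yields $v_\eta\geq v_\nu$ on $\supp(\mu_1)$; integrating against $\mu_1$ gives $J(\nu\star\rho)\geq J(\nu)$. For \ref{trivia:ii}, put $\eta:=\rho\star\nu$, with disintegration $\eta_x=\int\nu_y(\cdot)\,\rho_x(\mathrm{d}y)$ against $\mu_1'$; using the martingale property of $\rho$ (so $\gamma(x)\leq\int\gamma(y)\,\rho_x(\mathrm{d}y)$) and then of $\nu$ (so $\nu_y[\gamma]-\gamma(y)\geq0$),
\[
\eta_x[\gamma]-\gamma(x)=\int\nu_y[\gamma]\,\rho_x(\mathrm{d}y)-\gamma(x)\ \geq\ \int\bigl(\nu_y[\gamma]-\gamma(y)\bigr)\,\rho_x(\mathrm{d}y)\ \geq\ 0,
\]
so, applying first the monotonicity of $\phi$ and then Jensen's inequality for the \emph{concave} $\phi$ against the probability $\rho_x$,
\[
v_\eta(x)\ \geq\ \phi\left(\int\bigl(\nu_y[\gamma]-\gamma(y)\bigr)\,\rho_x(\mathrm{d}y)\right)\ \geq\ \int v_\nu(y)\,\rho_x(\mathrm{d}y);
\]
integrating against $\mu_1'$ and using ${X_2}_\star\rho=\mu_1$ gives $J(\rho\star\nu)=\mu_1'[v_\eta]\geq\mu_1[v_\nu]=J(\nu)$. (All integrals are well-defined in $[-\infty,\infty)$: $\phi$ has an affine majorant and $\gamma\in L^1(\mu_1)\cap L^1(\mu_2)$, so the positive parts are integrable; if $v_\nu^-\notin L^1(\mu_1)$ then $J(\nu)=-\infty$ and there is nothing to prove.) Finally, since the pertinent convex orders hold (in \ref{trivia:ii}, $\mu_1'$ precedes $\mu_1$, which precedes $\mu_2$; in \ref{trivia:ii'}, $\mu_1$ precedes $\mu_2$, which precedes $\mu_2'$), Strassen's theorem provides, for each $\nu\in\MM$, a martingale transport $\rho$ of $\mu_1'$ to $\mu_1$ (resp.\ of $\mu_2$ to $\mu_2'$), and $\rho\star\nu\in\MM'$ (resp.\ $\nu\star\rho\in\MM'$, which is non-empty by transitivity of the convex order); the inequality of suprema follows on taking the supremum over $\nu\in\MM$.

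\emph{Part \ref{trivia:iii}.} Enumerate $S:=\supp(\mu_1)=\{a_1,a_2,\dots\}$, put $p_i:=\mu_1(\{a_i\})$ and $\nu_i:=\nu(X_2\in\cdot\vert X_1=a_i)$, so that $J(\nu)=\sum_i p_i\,\phi(\nu_i[\gamma]-\gamma(a_i))$, the arguments of $\phi$ being in $[0,\infty)$, where $\phi$ is continuous. The plan hinges on the weak continuity of $\MM\ni\nu\mapsto\nu_i[\gamma]$ for each $i$. First, $\nu\mapsto\nu_i$ is weakly continuous: as $S$ is finite, or denumerable with no limit point in $\JJ$, $a_i$ is isolated in $S$, so there is an open $U_i\subseteq\JJ$ with $U_i\cap S=\{a_i\}$, whence $p_i\nu_i=\nu(U_i\times\cdot)$ for every $\nu\in\MM$; for $f\in C_b(\JJ)$ the functions $(x,y)\mapsto\mathbbm{1}_{U_i}(x)\bigl(\Vert f\Vert_\infty\pm f(y)\bigr)$ are nonnegative and lower semicontinuous with $\nu$-integrals summing to the constant $2\Vert f\Vert_\infty p_i$, which with the portmanteau lemma forces $\nu\mapsto\int\mathbbm{1}_{U_i}(x)f(y)\,\nu(\mathrm{d}x,\mathrm{d}y)=p_i\nu_i[f]$ to be weakly continuous on $\MM$ (this is exactly the continuity argument in the proof of Theorem~\ref{corollary:extremal-points}). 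Second, since $\nu_i\leq\mu_2/p_i$ uniformly over $\MM$ and $\gamma\in L^1(\mu_2)$, the truncation argument from that same proof upgrades weak convergence of the $\nu_i$'s to convergence of their $\gamma$-integrals. Composing with the continuous $\phi$, each $\nu\mapsto p_i\,\phi(\nu_i[\gamma]-\gamma(a_i))$ is weakly continuous; when $S$ is finite, $J$ is a finite sum of these, hence continuous. When $S$ is denumerable and $\phi$ is bounded, the tail $\sum_{i>N}p_i\,\phi(\nu_i[\gamma]-\gamma(a_i))$ is bounded in modulus by $\Vert\phi\Vert_\infty\sum_{i>N}p_i$ uniformly over $\MM$, so an $\varepsilon/3$ argument along a weakly convergent sequence (finitely many terms converge, tails uniformly small) gives weak sequential continuity of $J$, hence weak continuity since the weak topology on $\MM$ is metrizable.

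\emph{The main obstacle.} Conceptually everything is driven by the Jensen chain in part \ref{trivia:ii}, and the one delicate point there is to resist applying concavity of $\phi$ prematurely: one must first descend from $\eta_x[\gamma]-\gamma(x)$ to $\int(\nu_y[\gamma]-\gamma(y))\,\rho_x(\mathrm{d}y)$ by \emph{monotonicity}, and only then invoke Jensen for $\phi$. The genuinely fiddly parts are, in \ref{trivia:i}, the weak closedness of $\MM$ — i.e.\ the uniform integrability of $\vert X_1\vert+\vert X_2\vert$ over all couplings of two finite-mean measures — and, in \ref{trivia:iii}, the weak continuity of $\nu\mapsto\nu_i$, for which one must use that $a_i$ is isolated \emph{within $S$} (not within $\JJ$), together with the recurring passage from weak convergence to convergence of the integrals of the possibly unbounded $\gamma$, which everywhere rests on the uniform domination of the conditional laws $\nu_i$ by the fixed finite measure $\mu_2/p_i$.
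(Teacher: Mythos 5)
Your proof is correct and follows essentially the same route as the paper's: convexity plus Prohorov with truncation/uniform integrability of $|X_1|+|X_2|$ for \ref{trivia:i}; the same two-step Jensen chain (monotonicity of $\phi$ first, then concavity of $\phi$) for \ref{trivia:ii} and \ref{trivia:ii'}; and discretization plus truncation of $\gamma$ and bounded convergence in the denumerable case for \ref{trivia:iii}. The only cosmetic departure is in \ref{trivia:iii}, where you obtain weak continuity of $\nu\mapsto p_i\nu_i[f]$ through an open-set/portmanteau sandwich rather than the paper's continuous bump functions $f_s$ with $f_s(s')=\delta_{ss'}$ on $S$; both gadgets rest on the same structural hypothesis on $S$ and establish the same fact.
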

\begin{proof}
\ref{trivia:i}. It is clear that $\MM$ is convex. Next, $\MM$ is weakly closed in the set of all probability measures on $\mathcal{B}_{\JJ^2}$. Indeed let $(\nu_n)_{n\in \mathbb{N}}$ be a sequence in $\MM$ with $\nu_n$ converging weakly to some probability $\nu_0$ on $\mathcal{B}_{\JJ^2}$ as $n\to\infty$. Then for any bounded, continuous $g:\JJ\to \mathbb{R}$, we have $\mu_1[g]=\nu_n[g\otimes 1]\to \nu_0[g\otimes 1]$ as $n\to\infty$, and likewise for the second marginal; in addition, in the equality $$\nu_n[X_2g(X_1)]=\nu_n[X_1g(X_1)]$$ one can pass to the limit $n\to \infty$ by a truncation of $X_2$ on the left-hand side and of $X_1$ on the right-hand side, exploiting the fact that the $\nu_n$, $n\in \mathbb{N}_0$, have fixed marginals that admit finite first moments. To see the latter for the left-hand side of  the preceding display, note that, with $X_2^m:=(X_2\land m)\lor (-m)$, one has \[\vert \nu_n[X_2g(X_1)]-\nu_0[X_2g(X_1)]\vert\leq 2\Vert g\Vert_\infty \mu_2[\vert \mathrm{id} \vert;\JJ\backslash [-m,m]]+\vert \nu_n[X_2^mg(X_1)]-\nu_0[X_2^mg(X_1)]\vert\] for $n\in \mathbb{N}$ and $m\in [0,\infty)$; then let $n\to\infty$ and $m\to \infty$ (in this order). Similarly for the right-hand side. The set $\MM$ is also weakly relatively compact in the set of all probabilities on $\mathcal{B}_{\JJ^2}$. This is by Prohorov's theorem, where tightness comes from the fact that the members of $\MM$ have fixed marginals: given any $\epsilon\in (0,\infty)$, there are compact $A$ and $B$ in $\JJ$ with $\mu_1[\JJ\backslash A]\leq \epsilon/2$ and $\mu_2[\JJ\backslash B]\leq \epsilon/2$; then $\nu[\JJ^2\backslash (A\times B)]\leq \epsilon$ for all $\nu\in \MM$. 

\ref{trivia:ii}. Set $\nu':=\rho\star \nu$. 
Let also $(\nu_x)_{x\in\JJ}$ be a disintegration of $\nu$ against $\mu_1$ and $(\rho_{x'})_{x'\in \JJ}$ be a disintegration of $\rho$ relative to $\mu_1'$. Then by definition $\nu'(dx',dy)=\mu_1'(dx')\int_x \rho_{x'}(dx)\nu_x(dy)$. The relation $J(\nu')\geq J(\nu)$ then follows by Jensen's inequality, using the concavity of $\phi$, the convexity of $\gamma$, and the nondecreasingness of $\phi$. Indeed,
\begin{align*}
  J(\nu') & =\int \mu_1'(dx')\phi\left(\int \rho_{x'}(dx)\int \nu_x(dy)\gamma(y)-\gamma(x')\right)\\
          & \geq \int \mu_1'(dx')\phi\left(\int \rho_{x'}(dx)\int \nu_x(dy)\gamma(y)-\int \rho_{x'}(dx)\gamma(x)\right) \\
          & \geq \int \mu_1'(dx')\int \rho_{x'}(dx)\phi\left(\int\nu_x(dy)\gamma(y)-\gamma(x)\right) \\
          & =\int  \mu_1(dx)\phi\left(\int\nu_x(dy)\gamma(y)-\gamma(x)\right)\\
          & =J(\nu).
\end{align*}

\ref{trivia:ii'}. Set $\nu':=\nu\star \rho$. 
Let also $(\nu_x)_{x\in\JJ}$ be a disintegration of $\nu$ against $\mu_1$ and $(\rho_{y})_{y\in \JJ}$ be a disintegration of $\rho$ relative to $\mu_2$. Then $\nu'(dx,dy')=\mu_1(dx)\int_y \rho_{y}(dy')\nu_x(dy)$ and we have
\begin{align*}
  J(\nu') & =\int \mu_1(dx)\phi\left(\int \nu_x(dy)\int \rho_{y}(dy')\gamma(y')-\gamma(x)\right)\\
          & \geq\int \mu_1(dx)\phi\left(\int \nu_x(dy)\gamma(y)-\gamma(x)\right) \\
          & =J(\nu).
\end{align*}
The claim follows.

\ref{trivia:iii}. For $s\in S$ set $p_s:=\mu_1(\{s\})$; we may assume $p_s>0$ for all $s\in S$. Let $(\nu^n)_{n\in \mathbb{N}_0}$ be a sequence in $\MM$ and assume $\nu^n\to \nu^0$ weakly as $n\to\infty$. Write $\nu^n=:\sum_{s\in \supp(\mu_1)}p_s\delta_{s}\times \nu_s^n$ for $n\in \mathbb{N}_0$.   For each $s\in \supp(\mu_1)$,  let $f_s$ be any continuous bounded function on $\JJ$ such that $f_s(s')=\delta_{ss'}$ for all $s'\in S$ (it exists because $S$ has no limit points in $\JJ$). Then, noting that $\gamma$ is continuous, as $n\to\infty$, $p_s\nu^n_s[\gamma]=\nu^n[f_s\otimes \gamma]\to \nu^0[f_s\otimes \gamma]=p_s\nu^0_s[\gamma]$, where the fact that $\gamma$ is not necessarily bounded can be handled by a truncation, exploiting the fact that all of the $\nu^n$, $n\in \mathbb{N}$, have the same second marginal that integrates $\gamma$: let $M\in [0,\infty)$; then for $n\in \mathbb{N}$,
\begin{align*}
  \vert \nu^n[f_s\otimes \gamma]  -\nu^0[f_s\otimes \gamma]\vert 
        & \leq \vert \nu^n[f_s\otimes \gamma]-\nu^n[f_s\otimes ((\gamma\lor (-M))\land M)]\vert \\
        & \quad +\vert \nu^n[f_s\otimes ((\gamma\lor (-M))\land M)]-\nu^0[f_s\otimes ((\gamma\lor (-M))\land M)]\vert\\
        & \quad +\vert \nu^0[f_s\otimes ((\gamma\lor (-M))\land M)]-\nu^0[f_s\otimes \gamma]\vert\\
        & \leq 2\Vert f_s\Vert_\infty \mu_2[\vert\gamma-((\gamma\lor (-M))\land M)\vert ]\\
        & \quad +\vert \nu^n[f_s\otimes ((\gamma\lor (-M))\land M)]-\nu^0[f_s\otimes ((\gamma\lor (-M))\land M)]\vert.
\end{align*}
Now let $n\to\infty$ and $M\to\infty$ (in this order). Therefore, for $n\in \mathbb{N}$, $J(\nu^n)=\sum_{s\in S}p_s\phi(\nu_s^n[\gamma]-\gamma(s))\to J(\nu_0)$ as $n\to \infty$, by the continuity of $\phi$ (when $S$ is denumerable the convergence is justified by bounded convergence using the boundedness of $\phi$). 
\end{proof}

\begin{question}
Is there always an optimal point in \eqref{eq:opt}? In particular, is the functional $J$ always weakly (upper semi-) continuous on $\MM$? Partial answers to the first question will be given in Theorem~\ref{theorem:general} and Proposition~\ref{proposition:duality}. The answer to the second question is likely to the negative because conditional expectations have a very delicate behavior under weak convergence. The following example demonstrating this phenomenon is due to J. Warren (private communication).
\end{question}

\begin{example}
 Let $\JJ=(0,1)$. For $n\in \mathbb{N}$ let $\PP_n=2\mathbbm{1}_{P_n}\cdot \mathfrak{l}^2$, where $\mathfrak{l}^2$ is Lebesgue measure on $\mathcal{B}_{(0,1)^2}$ and $P_n:=\left(\cup_{k=1,k\text{ odd}}^{n}(\frac{k-1}{n},\frac{k}{n})\times (0,\frac{1}{2})\right)\cup \left(\cup_{k=1,k\text{ even}}^{n}(\frac{k-1}{n},\frac{k}{n})\times (\frac{1}{2},1)\right)$. Then $\PP_n\to \mathfrak{l}^2$ weakly as $n\to \infty$. However it is not the case that one would have $\PP_n[(\PP_n[\mathbbm{1}_{(0,\frac{1}{2})}(X_2)\vert X_1])^2]\to \mathfrak{l}^2[(\mathfrak{l}^2[\mathbbm{1}_{(0,\frac{1}{2})}(X_2)\vert X_1])^2]$ as $n\to \infty$. In fact $\PP_n[(\PP_n[\mathbbm{1}_{(0,\frac{1}{2})}(X_2)\vert X_1])^2]=\frac{1}{2}$ for all $n\in \mathbb{N}$, while $\mathfrak{l}^2[(\mathfrak{l}^2[\mathbbm{1}_{(0,\frac{1}{2})}(X_2)\vert X_1])^2]=0$. Besides, clearly one can replace the square and $\mathbbm{1}_{(0,\frac{1}{2})}$ in the preceding with suitable bounded continuous functions and still the convergence  $\PP_n[(\PP_n[\mathbbm{1}_{(0,\frac{1}{2})}(X_2)\vert X_1])^2]\to \mathfrak{l}^2[(\mathfrak{l}^2[\mathbbm{1}_{(0,\frac{1}{2})}(X_2)\vert X_1])^2]$ as $n\to\infty$ will fail.
\end{example}
The following result plays the r\^ole of \cite[Theorem~5.2]{guyon} in our more general setting. It identifies a canonical upper bound for $\sup_{\nu\in \mathcal{M}}J(\nu)$ and characterizes (under fairly innocuous assumptions on $\phi$) when this upper bound is attained.
 
\begin{proposition}\label{proposition:good_scenario}
We have that $\sup_{\nu\in \mathcal{M}}J(\nu)\leq \phi(\mu_2[\gamma]-\mu_1[\gamma])$. Moreover,  provided $\phi$ is injective and strictly concave, then for a $\nu\in \mathcal{M}$, the following are equivalent:
\begin{enumerate}[(i)]
\item\label{good:i} $V_\nu$ is constant a.s.-$\nu$.
\item\label{good:ii} $\nu$ is optimal for \eqref{eq:opt} and the optimal value is equal to $\phi(\mu_2[\gamma]-\mu_1[\gamma])$.
\item\label{good:iii} ``The $\gamma$-increment of $(X_1,X_2)$ is uncorrelated with $X_1$ under $\nu$'', that is to say: $\nu\left[\gamma(X_2)-\gamma(X_1)\vert X_1\right]=\mu_2[\gamma]-\mu_1[\gamma]$ a.s.-$\nu$.
\item\label{good:iv} $(\gamma(X_1)-\mu_1[\gamma],\gamma(X_2)-\mu_2[\gamma])$ is a martingale under $\nu$.
\end{enumerate}
\end{proposition}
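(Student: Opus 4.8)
The plan is to establish the bound first and then prove the chain of equivalences, using Jensen's inequality as the organizing principle. For the bound $\sup_{\nu\in\MM}J(\nu)\leq\phi(\mu_2[\gamma]-\mu_1[\gamma])$, fix $\nu\in\MM$ and apply Jensen's inequality for the concave $\phi$ to the inner conditional expectation: we have
\[
J(\nu)=\nu\bigl[\phi(\nu[\gamma(X_2)-\gamma(X_1)\mid X_1])\bigr]\leq\phi\bigl(\nu[\gamma(X_2)-\gamma(X_1)]\bigr)=\phi(\mu_2[\gamma]-\mu_1[\gamma]),
\]
where the outer step is again Jensen applied to $\phi$ composed with the expectation of the (integrable, since $\mu_i[\gamma^+]<\infty$ and convex-order controls $\gamma^-$) random variable $\nu[\gamma(X_2)-\gamma(X_1)\mid X_1]$, and the final equality uses ${X_1}_\star\nu=\mu_1$, ${X_2}_\star\nu=\mu_2$.

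For the equivalences, assume now $\phi$ is injective and strictly concave. The main engine is the equality case in Jensen's inequality: for a strictly concave $\phi$, one has $\nu[\phi(W)]=\phi(\nu[W])$ if and only if $W$ is a.s.\ constant. First I would show \ref{good:i}$\Leftrightarrow$\ref{good:iii}: setting $W:=\nu[\gamma(X_2)-\gamma(X_1)\mid X_1]$, statement \ref{good:i} says $\phi(W)$ is a.s.\ constant, which by injectivity of $\phi$ is equivalent to $W$ being a.s.\ constant; and since $\nu[W]=\mu_2[\gamma]-\mu_1[\gamma]$, an a.s.-constant $W$ must equal that value, which is precisely \ref{good:iii}. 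Next, \ref{good:iii}$\Leftrightarrow$\ref{good:iv}: statement \ref{good:iii} says $\nu[\gamma(X_2)-\mu_2[\gamma]\mid X_1]=\gamma(X_1)-\mu_1[\gamma]$ a.s.; together with the obvious fact that $(\gamma(X_1)-\mu_1[\gamma])$ is $\sigma(X_1)$-measurable and centred, and $\gamma(X_2)-\mu_2[\gamma]$ is centred, this is exactly the two-step martingale property for the filtration generated by $(X_1,X_2)$ — here one should note the subtlety that a genuine two-period martingale needs the conditional expectation given $\sigma(X_1)$, which is what $\nu[\cdot\mid X_1]$ denotes, so the equivalence is essentially a restatement. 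Then \ref{good:iii}$\Rightarrow$\ref{good:ii}: if \ref{good:iii} holds then $W$ is the constant $\mu_2[\gamma]-\mu_1[\gamma]$, so $J(\nu)=\phi(\mu_2[\gamma]-\mu_1[\gamma])$, which by the already-proved bound is the optimal value, so $\nu$ is optimal. Finally \ref{good:ii}$\Rightarrow$\ref{good:i}: if $J(\nu)=\phi(\mu_2[\gamma]-\mu_1[\gamma])$, then chasing back through the two Jensen inequalities above, both must be equalities; the outer equality (strict concavity of $\phi$, applied to the expectation of $W$) forces $W$ to be a.s.\ constant, and then the inner equality is automatic — in any case $\phi(W)=V_\nu$ is a.s.\ constant, which is \ref{good:i}.

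The step I expect to need the most care is \ref{good:ii}$\Rightarrow$\ref{good:i}, specifically the bookkeeping of which Jensen inequality to invoke and in which direction: one has a composition of an inner conditional-expectation Jensen step and an outer unconditional-expectation Jensen step, and the equality-case analysis must be applied at the outer step (strict concavity of $\phi$ forces the argument $W$ to be degenerate) rather than merely at the inner step. A secondary point worth stating carefully is the integrability underpinning every use of Jensen: that $\gamma(X_1)\in L^1(\mu_1)$ and $\gamma(X_2)\in L^1(\mu_2)$ — the hypotheses $\mu_i[\gamma^+]<\infty$ give the positive parts, and convex order plus finiteness of means controls $\mu_i[\gamma^-]$ via the supporting-line bound $\gamma\geq\gamma(x_0)+c(\cdot-x_0)$ for a subgradient $c$ at an interior point $x_0$, so $\gamma^-$ grows at most linearly and is integrable — so that all conditional expectations and their expectations are well-defined and finite. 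Once these integrability remarks are in place the argument is a routine, if slightly delicate, application of the strict-concavity equality case in Jensen's inequality.
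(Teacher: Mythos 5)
Your argument is correct and follows essentially the same route as the paper: one application of Jensen's inequality gives the bound, and the equality case of Jensen for the strictly concave $\phi$ (together with injectivity and the mean constraint $\nu[W]=\mu_2[\gamma]-\mu_1[\gamma]$) drives the equivalences. One small imprecision: there is only a single Jensen step in the bound (the unconditional one, $\nu[\phi(W)]\leq\phi(\nu[W])$ with $W=\nu[\gamma(X_2)-\gamma(X_1)\mid X_1]$); the passage from $\nu[\gamma(X_2)\mid X_1]-\gamma(X_1)$ to $W$ and the reduction $\nu[W]=\nu[\gamma(X_2)-\gamma(X_1)]$ are just $\sigma(X_1)$-measurability and the tower property, not a second Jensen inequality, so your ``chasing back through the two Jensen inequalities'' should be read as chasing back through one.
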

\begin{remark}
The equivalent conditions of the proposition certainly hold when $\gamma(X_2)-\gamma(X_1)$ is independent of $X_1$ under $\nu$.
\end{remark}
\begin{proof}
  Let $\nu \in \mathcal{M}$. Then by Jensen's inequality
  \begin{align*}
    J(\nu) & = \nu \left[\phi(\nu[\gamma(X_2)-\gamma(X_1)\vert X_1])\right] \\
           & \leq \phi(\nu \left[\nu[\gamma(X_2)-\gamma(X_1)\vert X_1]\right])\\
           & =\phi(\nu[\gamma(X_2)-\gamma(X_1)])=\phi(\mu_2[\gamma]-\mu_1[\gamma]).
  \end{align*}
  Assume now $\phi$ is injective and strictly concave. Then in the preceding inequality  there is equality only if $V_\nu$ is constant a.s.-$\nu$; hence, \ref{good:ii} implies \ref{good:i}. Clearly \ref{good:i}, \ref{good:iii} and \ref{good:iv} are equivalent. Finally, suppose that \ref{good:i} holds true. Then  $\nu[V_\nu]=\phi(\nu[\phi^{-1}(V_\nu)])= \phi(\mu_2[\gamma]-\mu_1[\gamma])$, and \ref{good:ii} follows. 
\end{proof}
\begin{example}
The equivalent conditions of Proposition~\ref{proposition:good_scenario} may fail for all $\nu\in \MM$. For instance, when $\JJ=(0,\infty)$ and $\gamma=-\frac{2}{\tau}\ln$ for a $\tau\in (0,\infty)$, let $d\in (0,1]$, $u\in [1,\infty)$, $U\in [u,\infty)$ and $D\in (0,d]$. A simple consideration reveals that there exists a unique probability measure $\PP$ on $\mathcal{B}_{\JJ^2}$ supported by the set $\{d,u\}\times \{U,D\}$ and rendering $(X_1,X_2)$ a unit-mean martingale. Let $\mu_1$ and $\mu_2$ be the first and second marginal of $\PP$, respectively. Then $\MM=\{\PP\}$. However, it is easy to check that $\PP[\ln(\frac{X_2}{X_1})\vert X_1=u]=\PP[\ln(\frac{X_2}{X_1})\vert X_1=d]$ iff $u=d$ ($=1$), which of course may fail to be the case.
\end{example}
\begin{example}\label{example}
  The equivalent conditions of Proposition~\ref{proposition:good_scenario} may be satisfied by more than one $\nu\in \mathcal{M}$; in particular there may be more than one (and indeed infinitely many) optimizers in \eqref{eq:opt}. This may be seen, again when $\JJ=(0,\infty)$ and $\gamma=-\frac{2}{\tau}\ln$ for a $\tau\in (0,\infty)$, by considering a situation in which the support of $\mu_1$ consists of two points, while the support of $\mu_2$ consists of four points \emph{and} there is a $\nu\in \MM$ satisfying the equivalent conditions of the previous proposition, with the support of $\nu_i:=\nu(X_2\in \cdot,X_1=i)/\mu_1(\{i\})$ being equal to the support of $\mu_2$ for all $i\in \supp(\mu_1)$ (such measures \emph{do} exist; we give a concrete example below). Then given $\nu\in \MM$, we can characterise $\nu$ by the $8$ real parameters $\nu_i(\{j\})$ corresponding to the disintegration of $\nu$ against $\mu_1$, where $i\in \supp(\mu_1)$, $j\in \supp(\mu_2)$, and these 8 parameters are subject to (at most) $7$ independent linear constraints if we include the condition originating from \ref{good:i} above: 
  \begin{itemize}
  \item $\sum_{i}\mu_1(\{i\})\nu_i(\{j\})=\mu_2(\{j\})$ for all $j\in \supp(\mu_2)$ (4 constraints)
  \item $\sum_j \nu_i(\{j\})=1$ for one $i\in\supp(\mu_1)$ (and the other $i \in \supp(\mu_1)$ follows; 1 constraint)
  \item $\sum_j j\nu_i(\{j\})=i$ for one $i\in\supp(\mu_1)$ (and the other $i \in \supp(\mu_1)$ follows; 1 constraint)
  \item the constraint in \ref{good:i} (1 constraint).
  \end{itemize}
  In addition, the parameters must also be nonegative, $\nu_i(\{j\})\geq 0$.

  If there is some solution to the linear constraints that satisfies the inequalities strictly, by the rank-nullity theorem and continuity there are in fact infinitely many solutions to the linear constraints that satisfy also the inequalities. We show that this can be the case with a concrete example:

 It will suffice to find real numbers $0<a_1 < a_2$ (the atoms of $\mu_1$),  $0<b_1 < b_2 < b_3 < b_4$ (the atoms of $\mu_2$), and $p_i, q_i>0$, $i\in \{1,2,3,4\}$ (the conditional probabilities out of $a_1$ and $a_2$), such that 
 \begin{align*}
   1 & =  p_1+p_2+p_3+p_4, \quad & a_1=p_1b_1+p_2b_2+p_3b_3+p_4b_4,\\
   1 & =  q_1+q_2+q_3+q_4, \quad & a_2=q_1b_1+q_2b_2+q_3b_3+q_4b_4,
 \end{align*}
 and
 \begin{align*}
   \sum_{i =1}^4 p_i \ln(b_i) - \ln(a_1) & = \sum_{i=1}^4 q_i \ln(b_i) -\ln(a_2).
 \end{align*}
 Then considering $b_i=:\alpha_i b_1$, $i\in \{2,3,4\}$, and eliminating $p_1$, $q_1$, $a_1$ and $a_2$, it will suffice to find real numbers $0<\alpha_2,\alpha_3,\alpha_4$ distinct and not equal to $1$, and $p_i>0$, $q_i>0$, $i\in \{2,3,4\}$, with $p_2+p_3+p_4<1$, $q_2+q_3+q_4<1$, such that $$\alpha_2^{p_2-q_2}\alpha_3^{p_3-q_3}\alpha_4^{p_4-q_4}=\frac{1+p_2(\alpha_2-1)+p_3(\alpha_3-1)+p_4(\alpha_4-1)}{1+q_2(\alpha_2-1)+q_3(\alpha_3-1)+q_4(\alpha_4-1)}$$ and $p_2(\alpha_2-1)+p_3(\alpha_3-1)+p_4(\alpha_4-1)\ne q_2(\alpha_2-1)+q_3(\alpha_3-1)+q_4(\alpha_4-1)$.  It is not obvious, but this can be done. For instance with $p_2 = 0.2$, $p_3 = 0.3$, $p_4 = 0.4$, $q_2 =0.4$, $q_3 = 0.3$, $q_4 = 0.2$, $\alpha_2 = 2$, $\alpha_3 = 3.5$, solving numerically gives $\alpha_4\doteq 4.61$ (of course, the existence of such an $\alpha_4$ could be argued analytically in a straightforward, albeit tedious fashion).  \end{example}

\subsection{Case when $\mu_1$ is supported on two points}\label{section:two-point}

Suppose $\supp(\mu_1)=\{a_1,a_2\}\subset \JJ$ with $a_1<a_2$. 
Denote $p_1:=\mu_1(\{a_1\})$ and $p_2:=\mu_1(\{a_2\})$. 

Following on from Meta-corollary~\ref{remark:fundamental}, and simplifying slightly further, we see that our optimization problem (up to identifying an optimizer and hence the optimal value) can be recast in the following form:
\begin{equation}\label{eq:eqivalent_form}
\max_{x\in \mathcal{Y}}\left( p_1\phi\left(x-\gamma(a_1)\right)+p_2\phi\left(\frac{\mu_2[\gamma]-p_1x}{p_2}-\gamma(a_2)\right)\right),
\end{equation}
where $\mathcal{Y}:=\{\mu[\gamma]:\mu\text{ a probability on $\mathcal{B}_{\JJ}$ such that }\mu[\id]=a_1\text{ and }p_1\mu\leq \mu_2\}$ is a compact interval of $\mathbb{R}$ of the form $[x_*,x^*]$ with $x_*=\nu_*[\gamma(X_2);X_1=a_1]/\mu_1(\{a_1\})$ and $x^*=\nu^*[\gamma(X_2);X_1=a_1]/\mu_1(\{a_1\})$ corresponding to the two (possibly one, if they coincide) curtain martingale transports $\nu_*$, $\nu^*$ from $\mu_1$ to $\mu_2$. Specifically,  given an optimal $\hat{x}$ for \eqref{eq:eqivalent_form}, an optimizer for \eqref{eq:opt} is $\nu*:=\lambda \nu_*+(1-\lambda)\nu^*$, where $\lambda\in [0,1]$ is such that $\hat{x}=\lambda x_*+(1-\lambda)x^*$.


Now,  because a positive combination of (strictly) concave functions is (strictly) concave, and because (strict) concavity is not affected by precomposition with a (non-constant) affine function, we see that the objective functional in \eqref{eq:eqivalent_form}  is in fact (strictly) concave on its natural domain $[\gamma(a_1),\gamma(a_1)+\frac{\mu_2[\gamma]-\mu_1[\gamma]}{p_1}]=:D$ (provided $\phi$ is strictly concave). In particular it means that there is only one maximizer $\hat{x}$ when $\phi$ is strictly concave. Furthermore, $D\supset [x_*,x^*]$ (which fact is automatic, because otherwise \eqref{eq:eqivalent_form} would not be well-defined) and $ D\ni x_0:=\gamma(a_1)+\mu_2[\gamma]-\mu_1[\gamma]=p_2(\gamma(a_1)-\gamma(a_2))+\mu_2[\gamma]$. Finally, at $x=x_0$, by Jensen's inequality, the objective functional in \eqref{eq:eqivalent_form} attains its largest value, $\phi(\mu_2[\gamma]-\mu_1[\gamma])$, on $D$.  

Therefore: if $x_0\in [x_*,x^*]$, then we may take $\hat{x}=x_0$ and, assuming further that $\phi$ is injective \& strictly concave, this corresponds to the situation described by Proposition~\ref{proposition:good_scenario}; if $x_0<x_*$, then we may take $\hat{x}=x_*$; finally if $x_0>x^*$, then we may take $\hat{x}=x^*$. The latter two cases correspond to $\nu*$ being one of the curtain martingale transports. To summarize, we may take $\hat{x}=(x_0\lor x_*)\land x^*$.

The above then constitutes a complete analytic solution to \eqref{eq:opt}, at least as far as finding \emph{an} optimizer is concerned (and hence automatically the corresponding optimal value), in the case when the support of $\mu_1$ is a two-point set. 

\begin{remark}\label{remark:min}
The preceding is modified in a straightforward manner to handle the case when $\min$ replaces $\max$ in \eqref{eq:opt}: simply replace $\max$ by $\min$ in \eqref{eq:eqivalent_form}. In that case there is always an optimizer for \eqref{eq:eqivalent_form} on the boundary of $\mathcal{Y}$ (because the objective functional in \eqref{eq:eqivalent_form} is concave) and any minimizer is necessarily on the boundary of $\mathcal{Y}$ if $\phi$ is even strictly concave (because then the objective functional in \eqref{eq:eqivalent_form} too is even strictly concave). 
\end{remark}

\subsection{Case when the support of $\mu_1$ is finite}\label{subsection:finite-support}
Suppose now $\mu_1$ is supported by the finite set $\{a_1,\ldots,a_n\}$ consisting of $n\in \mathbb{N}$ elements. (Of course the case $n=1$ is trivial, while the case $n=2$ was treated in the previous subsection, so the following is only interesting for $n\geq 3$.) Set $p_i:=\lambda_1(\{a_i\})$ for $i\in \{1,\ldots,n\}$.

We may write the objective functional in \eqref{eq:opt} as, for $\nu\in \mathcal{M}$, $$J(\nu)=\sum_{i=1}^np_i\phi(\nu_i[\gamma]-\gamma(a_i)),$$ where $\nu_i:=\nu(X_2\in \cdot\vert X_1=a_i)$ for $i\in \{1,\ldots,n\}$.

We see then that we are precisely in the setting of Meta-corollary~\ref{remark:fundamental} and hence the procedure for finding an optimizer to \eqref{eq:opt} described there applies. Specifically, the associated Euclidean space problem is now 

\begin{equation}\label{eq:finite-equivalent}
\max_{x\in \mathcal{X}}\sum_{i=1}^np_i\phi(x_i-\gamma(a_i)),
\end{equation}
where $\mathcal{X}:=\{(\nu_1[\gamma],\ldots,\nu_n[\gamma]):\nu\in \mathcal{M}\}$.

Unlike when $n=2$ it is no longer possible to give ``nice'' closed-form expressions for an optimizer. 

\begin{remark}\label{remark:min-finite}
The preceding discussion also holds if $\min$ replaces $\max$ in \eqref{eq:opt}. However the  minimisation problem is  then seen to be less interesting than the maximisation problem because in that case an optimal point is to be found in a vertex of $\mathcal{X}$ by essentially the same argument as in the case when the support of $\mu_1$ consisted of two points. We leave the details to the reader.
\end{remark}
\begin{remark}\label{remark:qualitative-finite}
The natural domain of the objective functional in \eqref{eq:finite-equivalent} is $\prod_{i=1}^n[\gamma(a_i),\infty)=:H$; of course $ H\supset \mathcal{X}$. In addition, $x_0:=(\gamma(a_i)+\mu_2[\gamma]-\mu_1[\gamma])_{i=1}^n\in H$, and by the concavity of $\phi$, the objective functional of \eqref{eq:finite-equivalent} attains its highest value on $H$ at $x_0$. Therefore, if $x_0\in \mathcal{X}$, then $x_0$ is optimal for \eqref{eq:finite-equivalent} and the corresponding optimal martingale transport $\hat{\nu}$ renders $V_{\hat{\nu}}$ constant on the support of $\mu_1$. If in addition $\phi$ is strictly concave, then $x_0$ is the only optimizer for  \eqref{eq:finite-equivalent}; if, moreover, $\phi$ is also injective, then this corresponds to the situation described by Proposition~\ref{proposition:good_scenario}. Conversely, if $x_0\notin \mathcal{X}$, then again by the concavity of $\phi$, a maximizer of \eqref{eq:finite-equivalent} can be found on the boundary $\partial \mathcal{X}$ of $\mathcal{X}$; if $\phi$ is even strictly concave, then any maximizer of  \eqref{eq:finite-equivalent} lies in $\partial \mathcal{X}$. 
\end{remark}

\subsection{General case}\label{subsection:general}
In this subsection we show a ``continuity'' statement for \eqref{eq:opt} in the first marginal $\mu_1$, which allows us effectively to reduce the general case to the case considered in Subsection~\ref{subsection:finite-support}.
\begin{theorem}\label{theorem:general}
Assume  that: 
\begin{enumerate}[(a)]
\item\label{gamma} $\gamma$ is  bounded; and 
\item\label{phi} $\phi\vert_{[0,2\Vert\gamma\Vert_\infty]}$ is nondecreasing and continuous, while $\phi\vert_{(0,2\Vert\gamma\Vert_\infty]}$ is locally Lipschitz.
\end{enumerate}
Then there exists a sequence of finitely supported measures on $\mathcal{B}_\JJ$, $(\mu^n_1)_{n \in \mathbb{N}}$, nondecreasing in convex order, converging weakly to $\mu_1$, and in convex order with respect to $\mu_1$, such that, if, for $n\in \mathbb{N}$, $\nu^n$ is a maximiser for \eqref{eq:opt} with $\mu^n_1$ replacing $\mu_1$, then:
\begin{enumerate}[(I)]
\item\label{thm:I} $\downarrow\!\!\text{-}\lim_{n\to\infty}J(\nu^n)=\max_{\nu\in \MM}J(\nu)$. 
\item\label{thm:II} The sequence $(\nu^n)_{n\in \mathbb{N}}$ is tight. 
\item\label{thm:III} Any weak accumulation point $\nu^0$ of the sequence $(\nu^n)_{n\in \mathbb{N}}$ is a maximizer for \eqref{eq:opt}, i.e. $\nu^0\in \MM$ and  $J(\nu^0)=\max_{\nu\in \MM}J(\nu)$.
\end{enumerate}
Such a sequence of maximizers $(\nu^n)_{n\in \mathbb{N}}$ and an associated accumulation point $\nu^0$ exist.
\end{theorem}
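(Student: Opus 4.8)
The plan is to construct the discretizing sequence $(\mu_1^n)_{n\in\mathbb{N}}$ explicitly by a quantile-partition of $\mu_1$: fix an increasing sequence of finite partitions of $\JJ$ into intervals whose $\mu_1$-masses go to zero (e.g. via the quantile function of $\mu_1$), and let $\mu_1^n$ be the measure obtained by collapsing each partition cell to its $\mu_1$-barycentre. Standard facts (the same ones used for Jensen-type discretizations in the martingale-transport literature) give that each $\mu_1^n$ is finitely supported, that $(\mu_1^n)_n$ is nondecreasing in convex order, that each $\mu_1^n$ precedes $\mu_1$ in convex order (barycentric collapse always produces a smaller measure in convex order), and that $\mu_1^n\to\mu_1$ weakly since the cell masses vanish. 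Since $\mu_1^n\preceq_{cx}\mu_1\preceq_{cx}\mu_2$, Strassen gives a martingale transport $\rho^n$ of $\mu_1^n$ to $\mu_1$, so each $\MM^n:=\{\text{martingale transports of }\mu_1^n\text{ to }\mu_2\}$ is nonempty; moreover $\mu_1^n$ is finitely supported, so by Proposition~\ref{proposition:trivia}\ref{trivia:iii} (using assumption \ref{phi} that $\phi$ is continuous on $[0,2\Vert\gamma\Vert_\infty]$, which is where $V_{\nu}$ lives by boundedness of $\gamma$) the functional $J$ is weakly continuous on the weakly compact set $\MM^n$, so a maximizer $\nu^n\in\MM^n$ exists.

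Next I would prove the monotone-convergence statement \ref{thm:I}. Monotonicity of the sequence $J(\nu^n)$ is exactly Proposition~\ref{proposition:trivia}\ref{trivia:ii}: given a martingale transport $\rho^n$ of $\mu_1^{n+1}$ to $\mu_1^n$ (which exists because $\mu_1^{n+1}\preceq_{cx}\mu_1^n$), composing $\rho^n\star(\cdot)$ with $\nu^n$ lands in $\MM^{n+1}$ and does not decrease $J$, so $J(\nu^{n+1})\geq J(\nu^n)$; the same argument with $\rho$ a martingale transport of $\mu_1^n$ to $\mu_1$ shows $J(\nu^n)\geq \max_{\nu\in\MM}J(\nu)$ for every $n$. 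Thus $\downarrow\text{-}\lim_n J(\nu^n)$ exists and is $\geq\max_{\nu\in\MM}J(\nu)$. For the reverse inequality — which is the substantive half — I would argue via the accumulation point: establish tightness \ref{thm:II} first (each $\nu^n$ has second marginal $\mu_2$ and first marginal $\mu_1^n$; the family $\{\mu_1^n\}$ is tight because it converges weakly, and $\mu_2$ is a single measure, so by the product-box argument as in the proof of Proposition~\ref{proposition:trivia}\ref{trivia:i} the family $(\nu^n)_n$ is tight), so by Prokhorov a weakly convergent subsequence $\nu^{n_k}\to\nu^0$ exists; check $\nu^0\in\MM$ (the martingale and marginal constraints pass to the weak limit exactly as in the proof of \ref{trivia:i}, using that $\mu_1^{n_k}\to\mu_1$ and that all first moments are uniformly controlled); and then show $J(\nu^0)\geq\limsup_k J(\nu^{n_k})$. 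Combined with $J(\nu^0)\leq\max_{\nu\in\MM}J(\nu)\leq\lim_n J(\nu^n)$, this forces equality throughout, giving \ref{thm:I} and \ref{thm:III} simultaneously; the final existence clause is then immediate.

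The main obstacle is precisely the last step: showing $\liminf_k J(\nu^{n_k})\leq J(\nu^0)$, i.e. an upper-semicontinuity-type estimate for $J$ along this particular sequence, despite the general failure of weak continuity of $\nu\mapsto\nu[\phi(\nu[\gamma(X_2)\mid X_1]-\gamma(X_1))]$ warned about in Warren's example. This is where assumption \ref{phi} that $\phi$ is locally Lipschitz on $(0,2\Vert\gamma\Vert_\infty]$ and the monotonicity/convex-order structure must be used, not bare weak convergence. The idea I would pursue: we already know $J(\nu^0)\leq\max_{\nu\in\MM}J(\nu)\leq J(\nu^{n_k})$, so it suffices to produce, from $\nu^0$, competitors in $\MM^{n_k}$ whose $J$-values converge down to $J(\nu^0)$ — concretely, take a martingale transport $\sigma^{n_k}$ of $\mu_1^{n_k}$ to $\mu_1$ and consider $\sigma^{n_k}\star\nu^0\in\MM^{n_k}$; by Proposition~\ref{proposition:trivia}\ref{trivia:ii}, $J(\sigma^{n_k}\star\nu^0)\geq J(\nu^0)$, but crucially we need an upper bound $J(\sigma^{n_k}\star\nu^0)\leq J(\nu^0)+o(1)$. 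This is where the Lipschitz bound on $\phi$ enters: the difference $J(\sigma^{n_k}\star\nu^0)-J(\nu^0)$ should be controlled by a modulus involving how far $\mu_1^{n_k}$-barycentres sit from the original points, which vanishes as the partition refines. Then $\max_{\nu\in\MM^{n_k}}J(\nu)=J(\nu^{n_k})$ is squeezed between $J(\nu^0)$ and $J(\nu^0)+o(1)$, forcing $J(\nu^{n_k})\to J(\nu^0)$ and closing the argument. Making this quantitative comparison rigorous — carefully disintegrating $\sigma^{n_k}\star\nu^0$ against $\mu_1^{n_k}$, applying the reverse-Jensen (Lipschitz) estimate cell by cell, and summing — is the technical heart of the proof.
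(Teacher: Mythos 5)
Your setup is sound and matches the paper: the barycentric collapse of partition cells gives the discretizing sequence $\mu_1^n\preceq_{cx}\mu_1^{n+1}\preceq_{cx}\mu_1$, existence of maximizers $\nu^n$ follows from Proposition~\ref{proposition:trivia}\ref{trivia:i},\ref{trivia:iii}, and tightness plus closedness of the martingale/marginal constraints follow as in Proposition~\ref{proposition:trivia}\ref{trivia:i}. (There is a direction slip in your monotonicity sentence --- for $m\leq n$ one has $\mu_1^m\preceq_{cx}\mu_1^n$, so Proposition~\ref{proposition:trivia}\ref{trivia:ii} gives $J(\nu^m)\geq J(\nu^n)\geq\max_{\nu\in\MM}J(\nu)$, i.e. the sequence is \emph{non}increasing, consistent with the $\downarrow$-limit in the statement.)

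The genuine gap is in the closing step. You propose to show $J(\sigma^{n_k}\star\nu^0)\leq J(\nu^0)+o(1)$ where $\sigma^{n_k}$ transports $\mu_1^{n_k}$ to $\mu_1$, and then claim $J(\nu^{n_k})$ is ``squeezed between $J(\nu^0)$ and $J(\nu^0)+o(1)$''. But $\sigma^{n_k}\star\nu^0$ is merely one competitor in $\MM^{n_k}$, so the optimality of $\nu^{n_k}$ only gives $J(\nu^{n_k})\geq J(\sigma^{n_k}\star\nu^0)$. Showing that this particular competitor's $J$-value converges to $J(\nu^0)$ therefore yields only a \emph{lower} bound on $J(\nu^{n_k})$, which you already have; it does not control $J(\nu^{n_k})$ from above, and the squeeze does not close. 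The estimate you sketch ($J(\sigma^{n_k}\star\nu^0)\to J(\nu^0)$ via a Lusin-type uniform continuity plus the local Lipschitz bound on $\phi$) is correct and is in fact one of the two ingredients the paper uses, but it sits in the wrong place in your logical chain.

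The missing idea is to coarsen the \emph{optimizer} $\nu^n$ rather than the accumulation point $\nu^0$: for $m\leq n$, let $\rho_{mn}$ be the (barycentric) martingale transport $\mu_1^m\to\mu_1^n$, so that $\rho_{mn}\star\nu^n\in\MM^m$; Proposition~\ref{proposition:trivia}\ref{trivia:ii} then gives the genuine \emph{upper} bound $J(\nu^n)\leq J(\rho_{mn}\star\nu^n)$. With $m$ fixed, $\rho_{mn}\star\nu^n$ has the fixed finitely supported first marginal $\mu_1^m$, so weak convergence $\nu^{n_k}\to\nu^0$ passes through to give $J(\rho_{mn_k}\star\nu^{n_k})\to J(\rho_m\star\nu^0)$ (this uses only that the cells of $\LL_m$ are $\mu_1$-continuity sets). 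Then one takes $m\to\infty$, and \emph{this} is where your Lusin plus local-Lipschitz-on-$(0,2\|\gamma\|_\infty]$ estimate is needed, to show $J(\rho_m\star\nu^0)\to J(\nu^0)$. Chaining these gives $\limsup_k J(\nu^{n_k})\leq J(\nu^0)\leq\max_{\nu\in\MM}J(\nu)\leq\liminf_k J(\nu^{n_k})$ and closes the argument.
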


\begin{remark}
\leavevmode
  \begin{enumerate}[(i)]
  \item The proof will in fact provide a simple recipe for constructing the sequence of approximating measures $\mu^n_1$ in terms of the original measure $\mu_1$. This approximation does not depend on $\mu_2$.
  \item For the proof technique that we use, the assumption that $\gamma$ is bounded (rather than, say, just locally bounded) appears to be crucial.
  \item If $\mu_1$ and $\mu_2$ have supports that are compactly contained in $\JJ$, then we may pass from $\JJ$ to an open subinterval $\JJ$ that is itself compactly contained in $\JJ$, and condition \ref{gamma} is met. 
  \end{enumerate}
\end{remark}
\begin{proof}
Let $(\LL_n)_{n\in \mathbb{N}}$ be a sequence of finite partitions of $\JJ$ such that:
\begin{enumerate}[(i)]
\item\label{partition:iii} for each $n\in \mathbb{N}$, $\LL_n$ consists of intervals that are continuity sets for $\mu_1$;
\item\label{partition:i} $\LL_{n+1}$ is finer than $\LL_n$ for each $n\in \mathbb{N}$;
\item\label{partition:ii} for each bounded $K\subset \JJ$, $d_n(K):=\max_{I\in \LL_n,I\cap K\ne \emptyset}\mathrm{diam}(I)\to 0$ as $n\to \infty$.
\end{enumerate}
Observe that such sequences of partitions certainly exist.

We now define our sequence of approximating measures: for $I\in \mathcal{B}_\JJ$, set $a_I$ equal to $\mu_1[\id\vert I]$ if $\mu_1(I)>0$, and equal to an arbitrary element of $\JJ$ otherwise. Then, for $n\in \mathbb{N}$: define $\mu_1^n:=\sum_{I\in \LL_n} \mu_1(I)\delta_{a_I}$; observe that $\mu^n_1$ is before $\mu_1$ in convex order, and hence also before $\mu_2$.  It is clear that, as $n\to\infty$, weakly and nondecreasingly in convex order, $\mu_1^n\to \mu_1$.  Further, by Proposition~\ref{proposition:trivia}, items \ref{trivia:i} and \ref{trivia:iii}, for each $n\in \mathbb{N}$, we can choose $\nu^n$ to be a maximizer for \eqref{eq:opt} in which $\mu^n_1$ replaces $\mu_1$. We show finally that any such sequence $(\nu^n)_{n\in \mathbb{N}}$ of maximizers has the desired properties \ref{thm:I}-\ref{thm:II}-\ref{thm:III}. Indeed, on account of the monotonicity of Proposition~\ref{proposition:trivia}\ref{trivia:ii} and the fact that the sequence $(\mu^n_1)_{n\in \mathbb{N}}$ is nondecreasing in the convex order, \ref{thm:I} will follow as soon as \ref{thm:II} and \ref{thm:III} are established. 

We focus first on \ref{thm:II}. For any $\epsilon\in (0,\infty)$, there is a compact $K_1\subset \JJ$ such that $\mu_1^n(\JJ\backslash K_1)<\epsilon$ for all sufficiently large natural $n$, while for each $m\in [0,\infty)$ for which $[-m,m]$ is a continuity set of $\mu_1$, $\mu_1^n[\vert \id\vert;\JJ\backslash [-m,m]]\to \mu_1[\vert \id\vert;\JJ\backslash [-m,m]]$ as $n\to\infty$. Then arguments very similar to the ones seen in the proof of Proposition~\ref{proposition:trivia}\ref{trivia:i} will show that the sequence $(\nu^n)_{n\in \mathbb{N}}$ is tight.  Let $\nu^0\in \mathcal{M}$ be any accumulation point of this sequence.

Next we introduce some notation. For $n\in \mathbb{N}$, let $\rho_n:=\sum_{I\in \LL_n}\delta_{a_I}\times \mu_1(\cdot \cap I)$, a martingale transport from $\mu_1^n$ to $\mu_1$. For $m \leq n$, define similarly $\rho_{mn}:=\sum_{I\in \LL_m}\delta_{a_I}\times \mu_1^n(\cdot\cap I)$, a martingale transport of $\mu^m_1$ to $\mu^n_1$. 

Now, by Proposition~\ref{proposition:trivia}\ref{trivia:ii}, for $m\leq n$, we have that $J(\nu^n)\leq J(\rho_{mn}\star \nu ^n)$; hence $\limsup_{n\to \infty}J(\nu^n)\leq \limsup_{n\to \infty} J(\rho_{mn}\star \nu ^n)$. We will show that: 
\begin{enumerate}[(1)]
\item\label{proof:1} $\lim_{n\to\infty}J(\rho_{mn}\star \nu ^n)=J(\rho_m\star \nu^0)$ for each $m\in \mathbb{N}$; and
\item\label{proof:2} $\lim_{m\to \infty}J(\rho_m\star\nu^0)=J(\nu^0)$. 
\end{enumerate}
This will imply $\limsup_{n\to \infty}J(\nu^n)\leq J(\nu^0)\leq \sup_{\nu \in \MM}J(\nu)$. On the other hand, again by Proposition~\ref{proposition:trivia}\ref{trivia:ii}, we will have $\liminf_{n\to \infty}J(\nu^n)\geq \sup_{\nu \in \MM}J(\nu)$, which will render \ref{thm:III} and the proof will be complete.

To prove \ref{proof:2}, let $(\nu^0_x)_{x\in \JJ}$ be a disintegration of $\nu^0$ against $\mu_1$. Let $\varepsilon\in (0,\infty)$. By a classical theorem of Lusin, there exists 
a compact set $K\subset \JJ$ with $\mu_1(\JJ\backslash K)<\varepsilon$ such that $K\ni x\mapsto \nu_x^0[\gamma]$ is continuous. In particular, because a continuous function on a compact set is uniformly continuous, we see that $\Delta_m(K):=\max_{I\in \LL_m,I\cap K\ne\emptyset}\sup_{v\in I\cap K,x\in I\cap K}\vert \nu^0_x[\gamma]-\nu^0_v[\gamma]\vert \to 0$ as $m\to \infty$.
Furthermore, we may assume that $\phi(0)=0$. With this assumption having been made, let also $\varepsilon'\in (0,\infty)$ and set $\delta':=\sup\{x\in [0,2\Vert \gamma\Vert_\infty]:\phi(x)\leq \varepsilon'\}$. In particular $\delta'\in (0,2\Vert \gamma\Vert_\infty]$ and $\phi(\delta')\leq \varepsilon'$.

We prepare now the following estimate for $\{a,b\}\subset [0,2\Vert \gamma\Vert_\infty]$ on the function $\phi$. Assume $a\leq b$; then:
\begin{itemize}
 \item if $\{a,b\}\subset [0,\delta')$, then $\vert \phi(b)-\phi(a)\vert=\phi(b)-\phi(a)\leq \phi(b)\leq \phi(\delta')$ (because $\phi$ is nondecreasing, in particular nonnegative);
\item if $a<\delta'\leq b$, then $\vert \phi(b)-\phi(a)\vert= (\phi(\delta')-\phi(a))+(\phi(b)-\phi(\delta'))\leq \phi(\delta')+\Vert \phi\vert_{[\delta',2\Vert \gamma\Vert_\infty]}\Vert_{\mathrm{Lip}}( b-\delta')\leq \phi(\delta')+\Vert \phi\vert_{[\delta',2\Vert \gamma\Vert_\infty]}\Vert_{\mathrm{Lip}}\vert b-a\vert$;
\item finally, if $\{a,b\}\subset [\delta', 2\Vert \gamma\Vert_\infty]$, then $\vert \phi(b)-\phi(a)\vert\leq \Vert \phi\vert_{[\delta',2\Vert \gamma\Vert_\infty]}\Vert_{\mathrm{Lip}}\vert b-a\vert$.
\end{itemize}
So $\vert \phi(b)-\phi(a)\vert\leq  \phi(\delta')+\Vert \phi\vert_{[\delta',2\Vert \gamma\Vert_\infty]}\Vert_{\mathrm{Lip}}\vert b-a\vert$, and the supposition $a\leq b$ may now also be dropped.

Next note that for $m\in \mathbb{N}$: $\rho_m\star \nu^0=\sum_{I\in \LL_m}\delta_{a_I}\times \int_I\nu_x^0(\cdot)\mu_1(dx)$; hence $(\rho_m\star \nu^0)[\gamma(X_2)\vert X_1]=\sum_{I\in  \LL_m}\mathbbm{1}_{\{a_I\}}(X_1)\frac{\int_I\nu_x^0[\gamma]\mu_1(dx)}{\mu_1(I)}$ a.s.-$\rho_m\star\nu^0$; and so 
\begin{align*}
  J&(\rho_m\star \nu^0)=\sum_{I\in \LL_m}\mu_1(I)\phi\left(\frac{\int_I\nu_x^0[\gamma]\mu_1(dx)}{\mu_1(I)}-\gamma(a_I)\right)\\
& =\sum_{I\in \LL_m}\int_{I}\phi\left(\frac{\int_I\nu_x^0[\gamma]\mu_1(dx)}{\mu_1(I)}-\gamma(a_I)\right)\mu_1(dv)\\
& =\sum_{I\in \LL_m}\int_{I\cap K}\phi\left(\frac{\int_I\nu_x^0[\gamma]\mu_1(dx)}{\mu_1(I)}-\gamma(a_I)\right)\mu_1(dv)+\sum_{I\in \LL_m}\int_{I\backslash K}\phi\left(\frac{\int_I\nu_x^0[\gamma]\mu_1(dx)}{\mu_1(I)}-\gamma(a_I)\right)\mu_1(dv).
\end{align*}
On the other hand 
\begin{align*}
J(\nu^0) & =\sum_{I\in \LL_m}\int_{I} \phi(\nu_v^0[\gamma]-\gamma(v))\mu_1(dv)\\
& =\sum_{I\in \LL_m}\int_{I\cap K} \phi(\nu_v^0[\gamma]-\gamma(v))\mu_1(dv)+\sum_{I\in \LL_m}\int_{I\backslash K} \phi(\nu_v^0[\gamma]-\gamma(v))\mu_1(dv).
\end{align*}
Then by the triangle inequality we can estimate (note that $\gamma$, being convex, is locally Lipschitz): 
\begin{align*}
\vert J& (\rho_m\star \nu^0)-J(\nu^0)\vert \\
  & \leq 2\phi(2\Vert\gamma\Vert_\infty]) \mu_1(\JJ\backslash K)\\
   &\qquad {}+\sum_{I\in \LL_m,I\cap K\ne \emptyset}\mu_1(I\cap K)\sup_{v\in I\cap K}\left\vert \phi\left(\frac{\int_I\nu_x^0[\gamma]\mu_1(dx)}{\mu_1(I)}-\gamma(a_I)\right)-\phi(\nu_v^0[\gamma]-\gamma(v))\right\vert\\
& \leq 2\phi(2\Vert\gamma\Vert_\infty]) \mu_1(\JJ\backslash K)+\phi(\delta')\mu_1(K) \\
&\qquad {}+\Vert \phi\vert_{[\delta',2\Vert\gamma\Vert_\infty]}\Vert_{\mathrm{Lip}}\,\mu_1(K)\Vert\gamma\vert_{K+[-d_m(K),d_m(K)]}\Vert_{\mathrm{Lip}}\,d_m(K) \\
& \qquad {} +\Vert \phi\vert_{[\delta',2\Vert\gamma\Vert_\infty]}\Vert_{\mathrm{Lip}}\sum_{I\in \LL_m,I\cap K\ne \emptyset}\sup_{v\in I\cap K}\int_I\vert \nu_x^0[\gamma]-\nu_v^0[\gamma]\vert\mu_1(dx).
\end{align*}
Further, 
\begin{align*}
  \sum_{I\in \LL_m,I\cap K\ne \emptyset} \sup_{v\in I\cap K}\int_I\vert \nu_x^0[\gamma]-&\nu_v^0[\gamma]\vert\mu_1(dx) \\
  & \leq \sum_{I\in \LL_m,I\cap K\ne \emptyset}\int_{I\cap K}\sup_{v\in I\cap K}\vert \nu_x^0[\gamma]-\nu_v^0[\gamma]\vert\mu_1(dx)\\
  &\qquad {}+\sum_{I\in \LL_m,I\cap K\ne\emptyset}\sup_{v\in I\cap K}\int_{I\backslash K} \vert \nu_x^0[\gamma]-\nu_v^0[\gamma]\vert\mu_1(dx)\\
& \leq \mu_1(K)\Delta_m( K)+2\Vert \gamma\Vert_\infty\mu_1(\JJ\backslash K).
\end{align*}
In conclusion 
\begin{align*}
  \vert J(\rho_m\star \nu^0)-J(\nu^0)\vert 
  & \leq 2\phi(2\Vert\gamma\Vert_\infty])\varepsilon+\varepsilon'+\Vert \phi\vert_{[\delta',2\Vert\gamma\Vert_\infty]}\Vert_{\mathrm{Lip}}\Vert\gamma\vert_{K+[-d_m(K),d_m(K)]}\Vert_{\mathrm{Lip}}\,d_m(K)\\ 
  & \qquad {} +\Vert \phi\vert_{[\delta',2\Vert\gamma\Vert_\infty]}\Vert_{\mathrm{Lip}}\,\Delta_m(K)+2\Vert \phi\vert_{[\delta',2\Vert\gamma\Vert_\infty]}\Vert_{\mathrm{Lip}}\Vert \gamma\Vert_\infty\varepsilon.
\end{align*}
Letting $m\to\infty$, $\varepsilon\downarrow 0$ and then $\varepsilon'\downarrow 0$ (in this order) concludes the argument for \ref{proof:2}.

It remains to argue \ref{proof:1}. To this end note that for natural $m\leq n$, one has $\rho_{mn}\star\nu^n=\sum_{I\in \LL_m}\delta_{a_I}\times \nu^n(I\times \cdot)$, while $\rho_m\star \nu^0=\sum_{I\in \LL_m}\delta_ {a_I}\times \nu^0(I\times \cdot)$. Then we may write $$J(\rho_{mn}\star\nu^n)=\sum_{I\in \LL_m}\mu_1(I)\phi\left(\frac{\nu^n[\mathbbm{1}_I\otimes \gamma]}{\mu_1(I)}-\gamma(a_I)\right)$$ and $$J(\rho_m\star \nu^0)=\sum_{I\in \LL_m}\mu_1(I)\phi\left(\frac{\nu^0[\mathbbm{1}_I\otimes \gamma]}{\mu_1(I)}-\gamma(a_I)\right).$$
The desired convergence is now transperent because all the $I\in \LL_m$, are continuity sets of $\mu_1$ by assumption. 
\end{proof}

\subsection{Duality arguments}\label{subsection:duality}
We assume in this section that $\phi$ is nonnegative and then without loss of generality that $\phi(0)=0$. Recalling the notation introduced in Subsection~\ref{sec:class-non-linear}, let $(S_1,S_2,V)$ be the canonical projections on $\JJ^2\times [0,\infty)$ and introduce
$$\mathcal{M}':=\{\text{probability measures $\mu$ on }\mathcal{B}_{\JJ^2\times [0,\infty)}\text{ such that }$$
$${S_1}_\star\mu=\mu_1,\, {S_2}_\star\mu=\mu_2,\, \mu[S_2\vert S_1,V]=S_1\text{ and }\phi(\mu[\gamma(S_2)\vert S_1,V]-\gamma(S_1))=V\text{ a.s.-$\mu$}\}.$$ For a $\mu\in \mathcal{M}'$ define $\mu\vert_{(1,2)}:=(S_1,S_2)_\star \mu$; and for a $\nu\in \mathcal{M}$, define $\nu':=(X_1,X_2,V_\nu)_\star \nu$.

We will consider the optimization problem 
\begin{equation}\label{eq:alternative}
\max_{\mu\in \mathcal{M}'}\mu[V];
\end{equation}
see the Introduction for the motivation behind this. 

The next proposition is a generalization of  \cite[(proof of) Proposition~4.10, Lemma~3.3]{guyon} to our setting. 

\begin{proposition}
We have the following assertions.
\begin{enumerate}[(i)]
\item\label{alternative:i} For a $\mu\in \mathcal{M}'$, $\mu\vert_{(1,2)}\in \mathcal{M}$.
\item\label{alternative:ii} For a $\nu\in \mathcal{M}$, $\nu'\in \mathcal{M}'$.
\item\label{alternative:iii} One has $\sup_{\nu \in \mathcal{M}}\nu[V_\nu]=\sup_{\mu\in \mathcal{M}'}\mu[V]$. Furthermore:
\begin{enumerate}[(a)]
\item\label{alt:a} if $\mu$ is optimal for \eqref{eq:alternative}, then $\mu\vert_{(1,2)}$ is optimal for \eqref{eq:opt}; and conversely,
\item\label{alt:b} if $\nu$ is optimal for \eqref{eq:opt}, then $\nu'$ is optimal for \eqref{eq:alternative}.
\end{enumerate}
\end{enumerate}
\end{proposition}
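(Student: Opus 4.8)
The plan is to establish the three items in turn, with \ref{alternative:i} and \ref{alternative:ii} serving as the bridge needed for \ref{alternative:iii}. For \ref{alternative:i}: given $\mu\in\mathcal{M}'$, I would check directly that $\mu\vert_{(1,2)}=(S_1,S_2)_\star\mu$ has the right marginals (immediate from ${S_1}_\star\mu=\mu_1$, ${S_2}_\star\mu=\mu_2$) and that it is a martingale coupling. For the latter, the point is the tower property: from $\mu[S_2\mid S_1,V]=S_1$ a.s.-$\mu$ one gets $\mu[S_2\mid S_1]=S_1$ a.s.-$\mu$ by conditioning further on the smaller $\sigma$-field generated by $S_1$ alone; since $\sigma(S_1)\subset\sigma(S_1,V)$ this is just $\mu[\mu[S_2\mid S_1,V]\mid S_1]=\mu[S_1\mid S_1]=S_1$. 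Translating to the canonical space $\JJ^2$ via the pushforward gives $\mu\vert_{(1,2)}[X_2\mid X_1]=X_1$, i.e.\ $\mu\vert_{(1,2)}\in\mathcal{M}$.

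For \ref{alternative:ii}: given $\nu\in\mathcal{M}$, set $\nu'=(X_1,X_2,V_\nu)_\star\nu$ where $V_\nu=\phi(\nu[\gamma(X_2)\mid X_1]-\gamma(X_1))$. The first two marginal conditions are clear. The third coordinate $V$ of $\nu'$ is, by construction, a $\sigma(X_1)$-measurable (hence $\sigma(S_1)$-measurable after pushforward) function, so $\sigma(S_1,V)=\sigma(S_1)$ under $\nu'$; consequently $\nu'[S_2\mid S_1,V]=\nu'[S_2\mid S_1]=S_1$ (using $\nu\in\mathcal{M}$) and $\nu'[\gamma(S_2)\mid S_1,V]=\nu'[\gamma(S_2)\mid S_1]=\nu[\gamma(X_2)\mid X_1]\circ(\text{proj})$, so $\phi(\nu'[\gamma(S_2)\mid S_1,V]-\gamma(S_1))=V_\nu\circ(\text{proj})=V$ a.s.-$\nu'$. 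Hence $\nu'\in\mathcal{M}'$. The only mildly delicate point here is checking that conditional expectations transport correctly under the pushforward and that $V_\nu$ is genuinely $\sigma(X_1)$-measurable (it is, as a version of a conditional expectation given $\sigma(X_1)$, composed with the continuous $\phi$ and subtracting $\gamma(X_1)$).

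For \ref{alternative:iii}: the key identities are $J(\nu)=\nu[V_\nu]=\nu'[V]$ for $\nu\in\mathcal{M}$ (immediate from the definition of $\nu'$ and the fact that $V$ is the third coordinate), and, conversely, for $\mu\in\mathcal{M}'$, $\mu[V]=\mu[\phi(\mu[\gamma(S_2)\mid S_1,V]-\gamma(S_1))]$. The inequality $\sup_{\nu\in\mathcal{M}}\nu[V_\nu]\le\sup_{\mu\in\mathcal{M}'}\mu[V]$ follows at once by sending $\nu\mapsto\nu'$. For the reverse inequality I must show $\mu[V]\le J(\mu\vert_{(1,2)})$ for every $\mu\in\mathcal{M}'$; this is where I expect the main work to lie. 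Writing $W:=\mu\vert_{(1,2)}[\gamma(X_2)\mid X_1]-\gamma(X_1)$, pushed forward this is $\mu[\gamma(S_2)\mid S_1]-\gamma(S_1)$, so by the tower property $\mu[\gamma(S_2)\mid S_1]=\mu[\mu[\gamma(S_2)\mid S_1,V]\mid S_1]$, and then Jensen's inequality for the concave $\phi$, applied conditionally on $\sigma(S_1)$, gives $\phi(W)=\phi(\mu[\mu[\gamma(S_2)\mid S_1,V]-\gamma(S_1)\mid S_1])\ge\mu[\phi(\mu[\gamma(S_2)\mid S_1,V]-\gamma(S_1))\mid S_1]=\mu[V\mid S_1]$. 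Integrating, $J(\mu\vert_{(1,2)})=\mu[\phi(W)]\ge\mu[V]$. Combining the two inequalities yields the equality of suprema.

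Finally, the statements \ref{alt:a} and \ref{alt:b} about optimizers drop out of the chain of (in)equalities just established: if $\nu$ is optimal for \eqref{eq:opt} then $\nu'[V]=J(\nu)=\sup_{\mathcal{M}}J=\sup_{\mathcal{M}'}\mu[V]$, so $\nu'$ is optimal for \eqref{eq:alternative}; and if $\mu$ is optimal for \eqref{eq:alternative} then $J(\mu\vert_{(1,2)})\ge\mu[V]=\sup_{\mathcal{M}'}\mu[V]=\sup_{\mathcal{M}}J$, forcing equality, so $\mu\vert_{(1,2)}$ is optimal for \eqref{eq:opt}. The main obstacle is marshalling the conditional-expectation / tower-property / conditional-Jensen argument cleanly, in particular being careful that all the conditional expectations involved are taken with respect to the correct $\sigma$-fields and that the pushforward maps them as expected; the integrability needed to apply Jensen is guaranteed by the standing hypotheses ($\mu_i[\gamma^+]<\infty$, finite means, and $\phi$ concave, nonnegative here) exactly as in the earlier remarks.
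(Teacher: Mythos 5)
Your proposal is correct and takes essentially the same route as the paper: tower property for \ref{alternative:i}, $\sigma(X_1)$-measurability of $V_\nu$ for \ref{alternative:ii}, and the identity $\nu'[V]=J(\nu)$ together with conditional Jensen (via the tower property to pass from $\sigma(S_1,V)$ to $\sigma(S_1)$) for \ref{alternative:iii}, with \ref{alt:a} and \ref{alt:b} falling out of the established chain of (in)equalities.
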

\begin{proof}
\ref{alternative:i}. By the tower property of conditional expectations $\mu[S_2\vert S_1]=S_1$ a.s.-$\mu$. Hence, because  $\mu\vert_{(1,2)}=(S_1,S_2)_\star \mu$, also $\mu\vert_{(1,2)}[X_2\vert X_1]=X_1$ a.s.-$\mu\vert_{(1,2)}$. 

\ref{alternative:ii}. In this case $\nu[X_2\vert X_1]=X_1$ a.s.-$\nu$ implies $\nu[X_2\vert X_1,V_\nu]=X_1$ a.s.-$\nu$, so that also $\nu'[S_2\vert S_1,V]=S_1$ a.s.-$\nu'$, while $V_\nu=\phi(\nu[\gamma(X_2)\vert X_1]-\gamma(X_1))$ a.s.-$\nu$, renders $\phi(\nu'[\gamma(S_2)\vert S_1,V]-\gamma(S_1))=V$ a.s.-$\nu'$.

\ref{alternative:iii}. Let $\nu\in \mathcal{M}$. Then $\nu'[V]=\nu[V_\nu]=J(\nu)$. Thus, by \ref{alternative:ii}, $\sup_{\mu\in \mathcal{M}'}\mu[V]\geq \sup_{\nu \in \mathcal{M}}J(\nu)$. Conversely, let $\mu\in \mathcal{M}'$. Then by the tower property of conditional expectations, and by conditional Jensen's inequality, exploiting the concavity of $\phi$, 
\begin{align*}
  \mu\vert_{(1,2)}[V_{\mu\vert_{(1,2)}}]
  & =\mu\vert_{(1,2)}[\phi(\mu\vert_{(1,2)}[\gamma(X_2)\vert X_1]-\gamma(X_1))]\\
  & =\mu[\phi(\mu[\gamma(S_2)\vert S_1]-\gamma(S_1))]\\
  & =\mu[\phi(\mu[\mu[\gamma(S_2)\vert S_1,V]-\gamma(S_1)\vert S_1])]\\
  & \geq \mu[\mu[\phi(\mu[\gamma(S_2)\vert S_1,V]-\gamma(S_1))\vert S_1]]\\
  & =\mu[\phi(\mu[\gamma(S_2)\vert S_1,V]-\gamma(S_1))]=\mu[V].
\end{align*}
Hence, from \ref{alternative:i}, $\sup_{\mu\in \mathcal{M}'}\mu[V]\leq \sup_{\nu \in \mathcal{M}}J(\nu)$. By the preceding also \ref{alt:a} and \ref{alt:b} follow.
\end{proof}
%

Moreover, corresponding to \cite[Theorem~4.1]{guyon}, we have the following duality result. Before we state it, we assume henceforth in this subsection that $\phi$ is injective (i.e. strictly increasing), continuous and that $\lim_\infty\phi=\infty$. This means that $\phi^{-1}$ is well-defined, strictly convex, strictly increasing, and, like $\phi$, maps $[0,\infty)$ onto $[0,\infty)$. We set

$$\mathcal{U}':=\{(u_1,u_2,\Delta,\Gamma)\in \mathcal{L}^1(\mu_1)\times \mathcal{L}^1(\mu_2)\times b\mathcal{B}_{\JJ\times [0,\infty)}\times (\mathcal{B}_{\JJ\times [0,\infty)}/\mathcal{B}_\mathbb{R}):$$
$$u_1(S_1)+u_2(S_2)+\Delta(S_1,V)(S_2-S_1)+\Gamma(S_1,V)(\gamma(S_2)-\gamma(S_1)-\phi^{-1}(V))\geq V\}.$$

\begin{theorem}\label{proposition:duality}
Assume that $\lim_{(\inf\JJ) +}(\vert\gamma\vert+\vert \mathrm{id}\vert)=\lim_{(\sup\JJ) -}(\vert\gamma\vert+\vert \mathrm{id}\vert)=\infty$. There is the following duality: 
\begin{equation}\label{duality}
\sup_{\mu\in \mathcal{M}'} \mu[V]=\inf_{(u_1,u_2,\Delta,\Gamma)\in \mathcal{U}'}\mu_1[u_1]+\mu_2[u_2].
\end{equation}
Furthermore, the supremum in \eqref{duality} is attained, and in the definition of $\mathcal{U}'$ we could have insisted, without affecting the validity of \eqref{duality}, that the functions $u_1,u_2$ are each the difference of two convex functions, while the functions $\Delta,\Gamma$ are continuous.
\end{theorem}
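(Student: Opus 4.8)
The plan is to prove \eqref{duality} by the two usual inequalities, then to deduce attainment of the supremum, and finally the regularity refinement. Note first that $\gamma\in L^1(\mu_1)\cap L^1(\mu_2)$: $\gamma$, being convex on the open interval $\JJ$, dominates any of its affine tangents, so $\gamma^-$ is of at most linear growth, hence $\mu_i$-integrable; together with $\mu_i[\gamma^+]<\infty$ this gives the claim. Likewise, by item~\ref{alternative:iii} of the proposition preceding Theorem~\ref{proposition:duality} and by Proposition~\ref{proposition:good_scenario}, $\mu[V]\le\phi(\mu_2[\gamma]-\mu_1[\gamma])<\infty$ for every $\mu\in\mathcal{M}'$, and $V\ge 0$, so $V$ is $\mu$-integrable.

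\emph{Weak duality ($\leq$).} I would fix $\mu\in\mathcal{M}'$ and $(u_1,u_2,\Delta,\Gamma)\in\mathcal{U}'$ and integrate the defining inequality of $\mathcal{U}'$ against $\mu$. The term $\mu[\Delta(S_1,V)(S_2-S_1)]$ vanishes because $\Delta$ is bounded and $\mu[S_2\vert S_1,V]=S_1$ a.s.-$\mu$. Since $\phi$ is injective and, by conditional Jensen, $\mu[\gamma(S_2)\vert S_1,V]\ge\gamma(S_1)$ a.s.-$\mu$, the calibration constraint is equivalent to $\mu[\gamma(S_2)-\gamma(S_1)\vert S_1,V]=\phi^{-1}(V)$ a.s.-$\mu$; hence $\mu[\Gamma(S_1,V)(\gamma(S_2)-\gamma(S_1)-\phi^{-1}(V))]$ vanishes when $\Gamma$ is bounded, and the general case follows by truncating $\Gamma$: the super-replication inequality, together with the $\mu$-integrability of $V$, $u_1(S_1)$, $u_2(S_2)$, $\Delta(S_1,V)(S_2-S_1)$, forces the negative part of that integrand to be $\mu$-integrable, after which one passes to the limit in the truncation. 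What remains is $\mu_1[u_1]+\mu_2[u_2]\ge\mu[V]$; taking the supremum over $\mu$ and the infimum over $\mathcal{U}'$ gives ``$\le$''.

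\emph{Strong duality ($\geq$).} By item~\ref{alternative:iii} of that same proposition it suffices to bound $\sup_{\nu\in\MM}J(\nu)$ above by the dual value. The key is concavity of $\phi$: writing $\phi$ as the infimum of its affine majorants and using the tower property under a martingale $\nu$, one gets $J(\nu)=\inf_{(a,b)}\nu[a(X_1)(\gamma(X_2)-\gamma(X_1))+b(X_1)]$, the infimum over (say continuous) pairs $(a,b)$ with $a(x)w+b(x)\ge\phi(w)$ for all $w\ge 0$ --- necessarily $a\ge 0$, as $\phi$ is increasing. Since $\MM$ is weakly compact convex (Proposition~\ref{proposition:trivia}\ref{trivia:i}) and the integrand is affine in $(a,b)$ over a convex constraint set, I would invoke Sion's minimax theorem to interchange $\sup_\nu$ and $\inf_{(a,b)}$, obtaining $\sup_{\nu\in\MM}J(\nu)=\inf_{(a,b)}\sup_{\nu\in\MM}\nu[c_{a,b}]$ with $c_{a,b}(x_1,x_2):=a(x_1)(\gamma(x_2)-\gamma(x_1))+b(x_1)$. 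Each inner problem is a classical one-step martingale optimal transport on the line, to which I would apply the known super-replication duality (the coercivity hypothesis on $\vert\gamma\vert+\vert\mathrm{id}\vert$ at $\partial\JJ$ and $\gamma\in L^1(\mu_1)\cap L^1(\mu_2)$ ruling out a gap and securing dual optimizers): $\sup_{\nu\in\MM}\nu[c_{a,b}]=\inf\{\mu_1[v_1]+\mu_2[v_2]:v_1(x_1)+v_2(x_2)+\delta(x_1)(x_2-x_1)\ge c_{a,b}(x_1,x_2)\text{ for all }x_1,x_2\}$. Feeding $w=\phi^{-1}(v)$ into $a(x_1)w+b(x_1)\ge\phi(w)$ and combining with the last inequality yields $v_1(x_1)+v_2(x_2)+\delta(x_1)(x_2-x_1)-a(x_1)(\gamma(x_2)-\gamma(x_1)-\phi^{-1}(v))\ge v$ for all $(x_1,x_2,v)$, i.e.\ $(v_1,v_2,\delta,-a)\in\mathcal{U}'$ (after the standard reduction of $\delta$ to a bounded function by localization and adjustment of the static legs). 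Hence $\inf_{\mathcal{U}'}(\mu_1[u_1]+\mu_2[u_2])\le\sup_{\mu\in\mathcal{M}'}\mu[V]$, which with weak duality yields \eqref{duality}.

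\emph{Attainment, regularity, and the main obstacle.} For attainment, $\mathcal{M}'$ is tight: its $\JJ^2$-marginals are the fixed $\mu_1,\mu_2$, while $\{V\}$ is uniformly integrable over $\mathcal{M}'$ because $\mu[\phi^{-1}(V)]=\mu_2[\gamma]-\mu_1[\gamma]$ for all $\mu\in\mathcal{M}'$ and $V=\phi(\phi^{-1}(V))\le a_0\phi^{-1}(V)+b_0$ by concavity of $\phi$. Along a maximizing sequence one extracts a weak limit $\mu^0$; the crucial point is that, once the martingale and calibration constraints are written in the test-function form ``$\mu[(S_2-S_1)h(S_1,V)]=0$'' and ``$\mu[(\gamma(S_2)-\gamma(S_1)-\phi^{-1}(V))h(S_1,V)]=0$'' for continuous bounded $h$, \emph{all} the constraints defining $\mathcal{M}'$ are \emph{linear} in $\mu$, hence preserved under weak limits --- which side-steps the bad behaviour of conditional expectations noted after Proposition~\ref{proposition:trivia}; standard truncations (using $\gamma\in L^1(\mu_i)$ and the uniform integrability of $\phi^{-1}(V)$) handle the unbounded factors, giving $\mu^0\in\mathcal{M}'$ with $\mu^0[V]=\sup_{\mathcal{M}'}\mu[V]$. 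For the regularity claim: in the construction above $c_{a,b}$ is, in the variable $x_2$, an affine perturbation of the convex function $a(x_1)\gamma$, so replacing each static leg by the supremum of the affine functions it must dominate lets one take $v_2$ convex and $v_1$ a difference of two convex functions (so also $u_1=v_1$, $u_2=v_2$), and $\delta=\Delta$, $-a=\Gamma$ may be mollified to continuous functions with the resulting error in the super-replication inequality absorbed into the static legs at a cost tending to $0$ by the integrability of $\mu_1,\mu_2$ (this also yields the boundedness of $\Delta$ invoked earlier). I expect the main obstacle to be the strong-duality step: justifying the minimax interchange and the attendant integrability bookkeeping. Because $\gamma$ is convex and typically unbounded and because $J$ conditions before integrating, the weak (semi)continuity of $\nu\mapsto\nu[c_{a,b}]$ and the finiteness of the integrals entering the minimax must be established by careful truncation --- presumably first for bounded $a$ and compactly supported $\mu_1,\mu_2$, then by a limiting argument --- and the reduction of the classical-MOT dual strategy $\delta$ to a bounded continuous function needs the localization argument to be made precise.
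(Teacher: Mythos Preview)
Your weak-duality argument is essentially the paper's. For the rest, however, your route is \emph{genuinely different}: the paper follows \cite{guyon} directly, building via de la Vall\'ee--Poussin a superlinear penalty $\xi$, five auxiliary functions $W^1,\dots,W^5$, and a weakly compact set $\Pi\subset\mathcal{P}(\JJ^2\times[0,\infty))$ on which a Lagrangian lemma yields $\inf\{x:x+g\ge\tilde f\text{ for some }g\in G\}=\sup_{\pi\in\Pi}\pi[\tilde f]$; a minimax swap over $\Pi$ and the bounded-continuous hedges then gives $D\le\sup_{\MM'\cap\Pi}\mu[V]$, and attainment comes for free since $\MM'\cap\Pi$ is compact. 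You instead pass through $\sup_{\MM'}\mu[V]=\sup_{\MM}J$, linearize $\phi$ by its affine majorants, and try to nest the classical one-step MOT duality inside a Sion swap. The appeal is modularity; the cost is that you must control two interchanges at once.

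There is a genuine gap in your strong-duality step beyond the ``bookkeeping'' you flag. The identity $J(\nu)=\inf_{(a,b)}\nu[c_{a,b}]$ holds over \emph{measurable} pairs (take $a(x)\in\partial\phi(g(x))$ with $g(x):=\nu_x[\gamma]-\gamma(x)$), but Sion needs a class with a topology making $(a,b)\mapsto\nu[c_{a,b}]$ lsc and $\nu\mapsto\nu[c_{a,b}]$ weakly usc --- in practice continuous, perhaps bounded, $(a,b)$. Over that class you only have $\inf_{(a,b)}\nu[c_{a,b}]\ge J(\nu)$, and Sion then gives
\[
\inf_{(a,b)}\sup_{\nu}\nu[c_{a,b}]=\sup_{\nu}\inf_{(a,b)}\nu[c_{a,b}]\;\ge\;\sup_{\nu}J(\nu),
\]
which is the \emph{wrong} direction for chaining with $\inf_{\UU'}\le\inf_{(a,b)}\sup_\nu\nu[c_{a,b}]$. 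You therefore need, in addition, $\inf_{(a,b)\text{ cts}}\nu[c_{a,b}]\le J(\nu)$, i.e.\ approximation of the measurable tangent selection by continuous $(a,b)$ obeying $a(x)w+b(x)\ge\phi(w)$; when $\phi'(0+)=\infty$ (e.g.\ $\phi=\sqrt{\cdot}$) the slope blows up on $\{g=0\}$ and this approximation is not automatic. (Incidentally, your sentence ``bound $\sup J$ above by the dual value'' is the wrong way round; your computation correctly aims at $\inf_{\UU'}\le\sup J$.)

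Two further points you pass over too quickly. First, the inner MOT dual gives $(v_1,v_2,\delta)$ with $\delta$ merely measurable; $\UU'$ demands $\Delta$ \emph{bounded}, and there is no general ``localization'' that forces this while preserving both $v_i\in L^1(\mu_i)$ and the global inequality on $\JJ^2\times[0,\infty)$. Second, in your attainment argument you assert uniform integrability of $\phi^{-1}(V)$ over $\MM'$; the fixed first moment $\mu[\phi^{-1}(V)]=\mu_2[\gamma]-\mu_1[\gamma]$ does not give this. One can recover it via conditional Jensen and a de la Vall\'ee--Poussin function for $\gamma(S_2)$ --- but that is exactly the machinery the paper's route sets up from the outset, and which also drives its compactness and regularity claims.
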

\begin{proof}
The inequality ``$\leq$'' in \eqref{duality} follows from the definitions of $\mathcal{M}'$ and $\mathcal{U}'$. Indeed let $\mu\in \MM'$ and $(u_1,u_2,\Delta,\Gamma)\in \UU'$. Then $\Gamma(S_1,V)(\gamma(S_2)-\gamma(S_1)-\phi^{-1}(V))\geq V-u_1(S_1)-u_2(S_2)-\Delta(S_1,V)(S_2-S_1)$, with the right-hand side and therefore the left-hand side having a $\mu$-integrable negative part. Taking $\mu$-expectations yields the claim.\footnote{If $\PP$ is a probability measure on $(\Omega,\FF)$, $\GG$ a sub-$\sigma$-field of $\FF$, and $X\in \mathcal{L}^1(\PP)$ is such that $\PP[X;G]=0$ for all $G\in \GG$, then $\PP[Xg]=0$ for all $g\in \GG/\mathcal{B}_\mathbb{R}$ for which $\PP[(Xg)^+]\land \PP[(Xg)^-]<\infty$: If $g$ is bounded then this is well-known. Let $g\in \GG/\mathcal{B}_{[0,\infty)}$. Then, for $n\in \mathbb{N}$, $0=\PP[X(g\land n)]=\PP[X^+(g\land n)]-\PP[X^-(g\land n)]\to \PP[X^+g]-\PP[X^-g]=\PP[Xg]$ as $n\to \infty$, by monotone convergence and because by assumption $\PP[X^+g]\land \PP[X^-g]<\infty$. In the general case $\PP[(Xg)^+]\land \PP[(Xg)^-]<\infty$ implies $\PP[(Xg^\pm)^+]\land \PP[(Xg^\pm)^-]<\infty$, and hence by what we have just shown $\PP[Xg]=\PP[Xg^+]-\PP[Xg^-]=0$.}

For the reverse inequality we follow closely \cite[Section~4]{guyon}.

First, by a classical theorem of de la Vall\'ee-Poussin, applied to the probability measure $\mu_1\times \mu_2$ and the $\mu_1\times \mu_2$-uniformly integrable (as finite integrable) family $\{X_1\otimes 1,1\otimes X_2,\gamma(X_1)\otimes 1,1\otimes \gamma(X_2)\}$, there exists a convex, strictly increasing (hence continuous) 
$\xi: [0,\infty)\to [0,\infty)$ of superlinear growth with $\xi(0)=\xi'(0+)=0$, such that $\mu_i[\xi(\vert  \cdot\vert)]<\infty$ and $\mu_i[\xi(\vert \gamma\vert)]<\infty$ for $i\in \{1,2\}$. 

Second, set $\theta(y):=\inf_{b\in (0,\infty)}\frac{y+\xi( b/2)}{b}$ for $y\in [0,\infty)$. Then $\theta:[0,\infty)\to [0,\infty)$ is concave, $\lim_\infty\theta=\infty$, and $\theta(y)b\leq y+\xi(\vert b\vert/2)$ for all $(y,b)\in [0,\infty)\times \mathbb{R}$. Therefore, for $(s_1,s_2,v)\in \JJ^2\times [0,\infty)$, choosing $y=\phi^{-1}(v)$ and $b=\gamma(s_2)-\gamma(s_1)$ in the preceding, we obtain 
$$\theta(\phi^{-1}(v))(\gamma(s_2)-\gamma(s_1))\leq \phi^{-1}(v)+\xi(\vert \gamma(s_2)-\gamma(s_1)\vert/2)\leq \phi^{-1}(v)+\xi(\vert \gamma(s_1)\vert)+\xi(\vert \gamma(s_2)\vert),$$ 
i.e. 
\begin{equation}\label{duality:one}
  \begin{split}
    \theta(\phi^{-1}(v))\phi^{-1}(v)\leq 
    & (-1-\theta(\phi^{-1}(v)))(\gamma(s_2)-\gamma(s_1)- \phi^{-1}(v))+\gamma(s_2)-\gamma(s_1)\\
    & +\xi(\vert \gamma(s_1)\vert)+\xi(\vert \gamma(s_2)\vert).
  \end{split}
\end{equation}

Third, introduce 
\begin{align*}
W^i & :=\xi(\vert S_i\vert)-\mu_i[\xi(\vert \cdot \vert)]-1, && \quad i\in \{1,2\},\\
W^{i+2} & :=\xi(\vert \gamma(S_i)\vert)-\mu_i[\xi(\vert \gamma\vert)]-1, && \quad i \in \{1,2\},\\
W^5 & :=\theta(\phi^{-1}(V))\phi^{-1}(V)-m-1, && 
\end{align*}
where $$m:=\mu_2[\gamma]-\mu_1[\gamma]+\mu_1[\xi(\vert \gamma\vert)]+\mu_2[\xi(\vert \gamma\vert)]\in [0,\infty).$$ 
Let also \[G:=\{\alpha\cdot (W^1,\ldots,W^5):\alpha\in[0,\infty)^5\}\] and \[\Pi:=\{\text{probabilities $\pi$ on }\mathcal{B}_{\JJ^2\times [0,\infty)}\text{ with }\pi[W^i]\text{ well-defined, finite and }\leq 0\text{ for all } i\in \{1,\ldots, 5\}\}.\] Then the $W^i$, $i\in \{1,\ldots, 5\}$, are all strictly negative at $(s_0,s_0,0)$, where $s_0:=\mu_1[\mathrm{id}]=\mu_2[\mathrm{id}]\in\JJ$, and hence \cite[Lemma~4.5]{guyon} implies  that for all $\tilde{f}\in \mathcal{B}_{\JJ^2\times [0,\infty)}/\mathcal{B}_\mathbb{R}$ (for which $\pi[\tilde{f}]$ is well-defined for all $\pi\in\Pi$)
\begin{equation}\label{duality:two}
\inf\{x\in \mathbb{R}:x+g\geq \tilde{f}\text{ for some }g\in G\}=\sup_{\pi \in \Pi}\pi[\tilde{f}].
\end{equation} 

Now let $H$ be the set of functions $h\in \mathbb{R}^{\JJ^2\times [0,\infty)}$ of the form $h(s_1,s_2,v)=u_1(s_1)-\mu_1[u_1]+u_2(s_2)-\mu_2[u_2]+\Delta(s_1,v)(s_2-s_1)+\Gamma(s_1,v)(\gamma(s_2)-\gamma(s_1)-\phi^{-1}(v))$, where $(u_1,u_2,\Delta,\Gamma)\in \mathcal{L}^1(\mu_1)\times \mathcal{L}^1(\mu_2)\times b\mathcal{B}_{\JJ\times [0,\infty)}\times (\mathcal{B}_{\JJ\times [0,\infty)}/\mathcal{B}_\mathbb{R})$; $H^c$ and $H^{cb}$ are the subspaces of $H$ obtained when $(u_1,u_2,\Delta,\Gamma)$ are restricted to continuous and bounded continuous functions, respectively. Then $$D:= \inf_{(u_1,u_2,\Delta,\Gamma)\in \mathcal{U}'}\mu_1[u_1]+\mu_2[u_2]=\inf\{x\in \mathbb{R}:x+h\geq V\text{ for some }h\in H\}$$
by the definition of $\mathcal{U}'$ and $H$, and so
\begin{align*} D & \leq \inf\{x\in \mathbb{R}:x+h\geq V\text{ for some }h\in H^c\}\\
 & =\inf\{x\in \mathbb{R}:x+g+h\geq V\text{ for some }g\in G,h\in H^c\},
\end{align*}
because, by the arguments above, and \eqref{duality:one} in particular, for every $g\in G$ there exists an $h\in H^c$ with $g\leq h$, and since $H^c$ is a linear space and $0\in G$. Then
\begin{align*}
D & \leq \inf\{x\in \mathbb{R}:x+g+h\geq V\text{ for some }g\in G,h\in H^{cb}\}\\
& =\inf_{h\in H^{cb}}\inf\{x\in \mathbb{R}:x+g\geq V-h\text{ for some }g\in G\}\\
& =\inf_{h\in H^{cb}}\sup_{\pi \in \Pi}\pi[V-h] \qquad \qquad \qquad \text{(by \eqref{duality:two})}\\
& = \sup_{\pi \in \Pi}\inf_{h\in H^{cb}} \pi[V-h].
\end{align*}
The final step follows by the same line of reasoning as in \cite[p. 608]{guyon}, exploiting (i) the concavity of $\phi$, which yields that $V\leq A+B\phi^{-1}(V)$ for some $\{A,B\}\subset [0,\infty)$; (ii) the assumption $\lim_{(\inf\JJ) +}(\vert\gamma\vert+\vert \mathrm{id}\vert)=\lim_{(\sup\JJ) -}(\vert\gamma\vert+\vert \mathrm{id}\vert)=\infty$; and (iii) the bound $\vert V-h\vert \leq A(1+\phi^{-1}(V)+\vert S_1\vert+\vert S_2\vert+\vert \gamma(S_1)\vert+\vert \gamma(S_2)\vert)$ for some $A\in [0,\infty)$, in order to show that $\Pi$ is weakly compact and  that $\pi\mapsto \pi[V-h]$ is weakly upper semicontinuous for $h\in H^{cb}$ (\cite[Lemma~4.8]{guyon}). Finally, from the definitions of $\Pi$, $H^{cb}$ and $\mathcal{M}'$ we conclude that
$$D \leq  \sup_{\mu \in \MM'\cap \Pi}\mu[V]\leq \sup_{\mu \in \MM'}\mu[V].$$

An inspection of the above reveals that we would still be able to prove the inequality ``$\geq$'' in \eqref{duality} if in the definition of $\mathcal{U}'$ all of the functions were continuous and with each of $u_1,u_2$ being moreover the difference of two convex functions (a ``delta-convex'' function): the latter e.g. because of the results of \cite{hartman} that gives closure of the set of delta-convex functions under compositions, subject to conditions that are sufficiently innocuous to apply in the present case, i.e. to $\xi(\vert\gamma\vert)$ in \eqref{duality:one}.

Finally, the argument that the supremum in \eqref{duality} is attained is exactly the same as in \cite[p.~607, 1st paragraph of proof of Theorem~4.1]{guyon}, though we should point out that the inclusion $\mathcal{M}'\subset \Pi$ is not clear (and probably not true, not even in the setting of \cite{guyon}), however this is not important, because by the argument above $ \sup_{\mu \in \MM'}\mu[V]=\sup_{\mu \in \MM'\cap \Pi}\mu[V]$ and the set $\MM'\cap \Pi$ is weakly compact with $V\mapsto \mu[V]$ weakly upper semicontinuous. 
\end{proof}
The proof of the following result is a straightforward computation; it corresponds to the ``classical superreplicating portfolio'' of \cite[Eq.~(2.7)]{guyon}.

\begin{proposition}
Set $v^*:=\phi(\mu_2[\gamma]-\mu_1[\gamma])$. Define the Legendre transform $(\phi^{-1})^*:[0,\infty)\to [0,\infty]$ of $\phi^{-1}$ as follows: 
$$(\phi^{-1})^*(b):=\sup_{u\in [0,\infty)}(bu-\phi^{-1}(u)).$$
Assume the equation 
\begin{equation}\label{equation}
(\phi^{-1})^*(b)=bv^*-\phi^{-1}(v^*),
\end{equation}
in $b\in (0,\infty)$, has a solution $b^*$. Then, setting $a^*:=1/b^*$, $u_1^*:=v^*-a^*\phi^{-1}(v^*)-a^*\gamma$, $u_2^*:=a^*\gamma$, $\Delta^*:=0$ and $\Gamma^*=-a^*$, we have

\hfill $\displaystyle(u_1^*,u_2^*,\Delta^*,\Gamma^*)\in \UU'\text{ and }\mu_1[u_1^*]+\mu_2[u_2^*]=v^*.$\qed
\end{proposition}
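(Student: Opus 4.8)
The plan is to verify, one after another, the four membership conditions that define $\mathcal{U}'$, then the pointwise superreplication inequality, and finally the value identity $\mu_1[u_1^*]+\mu_2[u_2^*]=v^*$; each step is a direct computation. For membership I would first record that $\gamma$, being convex on $\JJ$, is bounded below by an affine function, so the finite-mean hypothesis on $\mu_1,\mu_2$ forces $\mu_i[\gamma^-]<\infty$, which together with the standing assumption $\mu_i[\gamma^+]<\infty$ yields $\gamma\in\mathcal{L}^1(\mu_1)\cap\mathcal{L}^1(\mu_2)$. This immediately places $u_1^*=(v^*-a^*\phi^{-1}(v^*))-a^*\gamma$ in $\mathcal{L}^1(\mu_1)$ and $u_2^*=a^*\gamma$ in $\mathcal{L}^1(\mu_2)$, while $\Delta^*=0\in b\mathcal{B}_{\JJ\times[0,\infty)}$ and the constant $\Gamma^*=-a^*\in\mathcal{B}_{\JJ\times[0,\infty)}/\mathcal{B}_\mathbb{R}$ trivially.

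For the inequality I would substitute the prescribed values of $(u_1^*,u_2^*,\Delta^*,\Gamma^*)$ into the left-hand side of the constraint defining $\mathcal{U}'$ and observe that the $\gamma(S_1)$- and $\gamma(S_2)$-contributions cancel, leaving $v^*+a^*(\phi^{-1}(V)-\phi^{-1}(v^*))$; the claim is that this dominates $V$ for every $V\in[0,\infty)$. Since $a^*=1/b^*>0$, this rearranges to $b^*V-\phi^{-1}(V)\le b^*v^*-\phi^{-1}(v^*)$, and taking the supremum over $V\in[0,\infty)$ on the left turns it into $(\phi^{-1})^*(b^*)\le b^*v^*-\phi^{-1}(v^*)$, which is precisely the assumed relation \eqref{equation} (an equality, in fact). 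Hence the constraint holds and $(u_1^*,u_2^*,\Delta^*,\Gamma^*)\in\mathcal{U}'$.

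Finally, for the value identity I would compute $\mu_1[u_1^*]+\mu_2[u_2^*]=v^*-a^*\phi^{-1}(v^*)+a^*(\mu_2[\gamma]-\mu_1[\gamma])$ and then use that $\mu_1$ and $\mu_2$ are in convex order with $\gamma$ convex, so $\mu_2[\gamma]-\mu_1[\gamma]\ge 0$; since $\phi$ is a strictly increasing bijection of $[0,\infty)$ onto $[0,\infty)$, this forces $\phi^{-1}(v^*)=\phi^{-1}(\phi(\mu_2[\gamma]-\mu_1[\gamma]))=\mu_2[\gamma]-\mu_1[\gamma]$, and the last two terms cancel, giving $v^*$. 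I do not expect any genuine obstacle: the only points requiring a moment of care are the integrability of $\gamma$ against the two marginals (handled via convexity plus finite mean) and the identification $\phi^{-1}(v^*)=\mu_2[\gamma]-\mu_1[\gamma]$, which hinges on the convex ordering guaranteeing that the argument of $\phi$ is nonnegative; everything else is algebraic cancellation dressed up by the Legendre-transform hypothesis.
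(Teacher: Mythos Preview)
Your proposal is correct and is exactly the ``straightforward computation'' the paper alludes to (the paper offers no further details beyond that phrase). Your substitution, cancellation, the Legendre-transform rearrangement for the superreplication inequality, and the identification $\phi^{-1}(v^*)=\mu_2[\gamma]-\mu_1[\gamma]$ for the value identity are all on point.
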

\begin{remark}
If \eqref{duality} and the equivalent conditions of Proposition~\ref{proposition:good_scenario} prevail, then $\mu_1[u_1^*]+\mu_2[u_2^*]=\min_{(u_1,u_2,\Delta,\Gamma)\in \mathcal{U}'}\mu_1[u_1]+\mu_2[u_2]$. 
\end{remark}
\begin{remark}
Eq.~ \eqref{equation} has  a solution when $\phi=\sqrt{\cdot}$ and $v^*>0$, namely $b^*=2v^*$.
\end{remark}

\bibliographystyle{plainnat}
\bibliography{Biblio_VIX}
\end{document}